%
%
%
%
\documentclass[12pt]{amsart}

\usepackage{graphicx}
\usepackage{amssymb}
\usepackage{float}
\usepackage{fullpage}

\newtheorem{theorem}{Theorem}[section]
\newtheorem{lemma}[theorem]{Lemma}
\newtheorem{corollary}[theorem]{Corollary}
\newtheorem{proposition}[theorem]{Proposition}

\theoremstyle{definition}
\newtheorem{definition}[theorem]{Definition}

\theoremstyle{remark}
\newtheorem{remark}[theorem]{Remark}

\numberwithin{equation}{section}

\usepackage{physics}
\usepackage{multicol}

\usepackage{comment}

\usepackage{mathtools}

\newcommand{\R}{{\mathbb R}}

\newcommand{\Z}{{\mathbb Z}}
\newcommand{\N}{{\mathbb N}}

\newcommand{\D}{{\mathbb D}}

\renewcommand{\phi}{{\varphi}}
\renewcommand{\le}{{\,\leqslant}}
\renewcommand{\ge}{{\,\geqslant}}

\newcommand{\1}{{\mathbf 1}}

\newcommand{\eps}{{\varepsilon}}


\begin{document}

\title[The linear sieve \& twin primes]{A modification of the linear sieve, and the\\ count of twin primes}

\author{Jared Duker Lichtman}
\address{Mathematical Institute, University of Oxford, Oxford, OX2 6GG, UK}

\email{jared.d.lichtman@gmail.com}


\subjclass[2010]{Primary 11N35, 11N36; Secondary 11N05}

\date{February 14, 2024.}


\keywords{linear sieve, well-factorable weights, level of distribution, switching principle, Buchstab identity}

\begin{abstract}
We introduce a modification of the linear sieve whose weights satisfy strong factorization properties, and consequently equidistribute primes up to size $x$ in arithmetic progressions to moduli up to $x^{10/17}$. This surpasses the level of distribution $x^{4/7}$ with the linear sieve weights from well-known work of Bombieri, Friedlander, and Iwaniec, and which was recently extended to $x^{7/12}$ by Maynard. As an application, we obtain a new upper bound on the count of twin primes. Our method simplifies the 2004 argument of Wu, and gives the largest percentage improvement since the 1986 bound of Bombieri, Friedlander, and Iwaniec.
\end{abstract}

\maketitle


\section{Introduction}
Given a finite set $\mathcal A$ of positive integers, sieve methods offer a broad framework for estimating the number of elements in $\mathcal A$ all whose prime factors exceed $z$, denoted $S(\mathcal A, z)$, in terms of the approximate density $g_{\mathcal A}(d)=g(d)$ of multiples of $d$ in $\mathcal A$, denoted $\mathcal A_d$. Note one often expects $S(\mathcal A, z)\approx |\mathcal A|\prod_{p<z}\big(1-g(p)\big)$. Combinatorial sieves may be viewed as refinements of the basic inclusion-exclusion principle, and are described by a sequence of weights $\lambda(d)\in\{-1,0,1\}$ supported on integers up to some level $D\ge1$. We refer the reader to Opera de Cribro \cite{Opera} for a more thorough introduction to the subject.

In particular, the upper bound weights $\lambda^+(d)$ for the linear sieve satisfy
\begin{align}\label{eq:linearJR}
S(\mathcal A, z) \ \le \ |\mathcal A|\prod_{p<z}\big(1-g(p)\big)\,\Big(F\big(\tfrac{\log D}{\log z}\big)+o(1)\Big) \ + \ \sum_{\substack{d\le D\\p\mid d\Rightarrow p<z}} \lambda^+(d) \, \big(|\mathcal A_d| - |\mathcal A|g(d)\big)
\end{align}
as $D\to\infty$, provided $g=g_{\mathcal A}$ satisfies some mild conditions. Here the function $F:\R_{\ge1}\to\R_{\ge1}$ is defined by a delay-differential equation, as in \eqref{eq:delaydiff}. For sets $\mathcal A$ sufficiently equidistributed in arithmetic progressions the second sum over $d\le D$ in \eqref{eq:linearJR} contributes negligibly, in which case the main term is $S(\mathcal A, z) \lesssim |\mathcal A|\prod_{p<z}\big(1-g(p)\big)F(s)$, where $z = D^{1/s}$. In fact, $F(s)\to1$ as $s\to\infty$ so the main term confirms the na\"ive expectation in this case. Moreover, $F$ is optimal in the sense that the bound \eqref{eq:linearJR} is attained sharply for a particular set $\mathcal A$. 

We introduce this sieve theory setup more formally in \S\ref{sec:sievesetup} below, and define the sieve weights $\lambda^+$ explicitly in \S\ref{sec:sievesupport}. See \cite[\S 12]{Opera} for further details on the linear sieve ($\beta=2$), as well as \cite[\S 11]{Opera} for its generalization to the $\beta$-sieve.

The linear sieve is powerful when combined with equidistribution estimates which make the final sum in \eqref{eq:linearJR} small. For example, the Bombieri--Vinogradov Theorem shows that for every $\eps,A>0$, letting $Q=x^{\frac{1}{2}-\eps}$ we have
\begin{align}\label{eq:BV}
\sum_{q\le Q}\sup_{(a,q)=1}\Big|\pi(x;q,a)-\frac{\pi(x)}{\phi(q)}\Big| \ \ll_{\eps,A} \ \frac{x}{(\log x)^A}.
\end{align}
So by taking $D=Q$, \eqref{eq:linearJR} can give a good upper bound when the set $\mathcal A$ is related to the primes, such as when $\mathcal A=\{p+2:p\le x\}$, in which case \eqref{eq:linearJR} gives an upper bound for the count of twin primes.

The estimate \eqref{eq:BV} may be viewed as an assertion of the Generalized Riemann Hypothesis on average over moduli up to $Q=x^{\frac{1}{2}-\eps}$. It remains an important open problem to extend the range to $Q=x^{\frac{1}{2}+\delta}$ for some fixed $\delta>0$. Indeed, Elliott and Halberstam \cite{EH} conjectured such an extension up to $Q=x^{1-\eps}$ for any $\eps>0$.

In some contexts it suffices to relax the setup in \eqref{eq:BV} in order to raise the level of distribution. In particular, in the case of a fixed residue class $a\in\Z$, and the absolute values replaced by well-factorable weights $\lambda(q)$ (c.f. Def. \ref{def:wellfactor}), the celebrated result of Bombieri--Friedlander--Iwaniec \cite{BFI1} raised the level up to $Q=x^{\frac{4}{7}-\eps}$,
\begin{align}\label{eq:BFIlinear}
\sum_{\substack{q\le Q\\(q,a)=1}} \lambda(q) \,\Big(\pi(x;q,a)-\frac{\pi(x)}{\phi(q)}\Big) \ \ll_{a,A,\eps} \ \frac{x}{(\log x)^A}.
\end{align}

While the linear sieve weights are not themselves well-factorable, Iwaniec \cite{Ilinear} constructed a well-factorable variant $\widetilde{\lambda}^+$ of the weights $\lambda^+$ (and so \eqref{eq:BFIlinear} holds with $\lambda=\widetilde{\lambda}^+$), which are only slightly altered from $\lambda^+$ so that $\widetilde{\lambda}^+$ enjoys an analogous linear sieve bound as in \eqref{eq:linearJR}, notably with an identical form of the main term,
\begin{align}\label{eq:linearwell}
S(\mathcal A, z) \ \le \ |\mathcal A|\prod_{p<z}\big(1-g(p)\big)\,\Big(F\big(\tfrac{\log D}{\log z}\big)+o(1)\Big) \ + \ \sum_{\substack{d\le D\\p\mid d\Rightarrow p<z}} \widetilde{\lambda}^+(d) \, \big(|\mathcal A_d| - |\mathcal A|g(d)\big).
\end{align}

The bound \eqref{eq:BFIlinear} stood for several decades, but quite recently Maynard \cite{JM2} managed to extend the level in \eqref{eq:BFIlinear} further to $Q=x^{\frac{7}{12}-\eps}$ in the case of the weights $\lambda=\widetilde{\lambda}^+$. Given the currently available equidistribution estimates for primes, we note the level $x^{\frac{7}{12}}$ is a natural barrier for these weights.

In this article, we modify the technical construction of the linear sieve weights to avoid this barrier, and thereby produce new sieve weights that induce stronger equidistribution estimates for primes.

\begin{theorem}\label{thm:etalinsieve}
Let $D = x^{\frac{10}{17}-\eps}$. There exists a sequence $\widetilde{\lambda}^*(d)\in\{-1,0,1\}$ satisfying:

\textnormal{(1) Equidistribution for primes:}
for any fixed $a\in\Z$, $A,\eps>0$, we have
\begin{align*}
\sum_{\substack{d\le D\\(d,a)=1}}\widetilde{\lambda}^*(d) \,\Big(\pi(x;d,a)-\frac{\pi(x)}{\phi(d)}\Big) \ \ll_{a,A,\eps} \ \frac{x}{(\log x)^A}.
\end{align*}

\textnormal{(2) Sieve upper bound:} for $s\ge1$, $z=D^{1/s}$, we have
\begin{align*}
S(\mathcal A, z) \ \le \ |\mathcal A|\prod_{p<z}\big(1-g(p)\big)\big(F^*(s) + o(1)\big) \ + \ \sum_{\substack{d\le D\\p\mid d\Rightarrow p<z}} \widetilde{\lambda}^*(d) \, \big(|\mathcal A_d| - |\mathcal A|g(d)\big),
\end{align*}
where $F^*(s) \, \le \, 1.000081\,F(s)$ when $1\le s\le 3$, for the linear sieve function $F$ as in \eqref{eq:delaydiff}.
\end{theorem}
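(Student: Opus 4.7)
The plan is to refine the Rosser--Iwaniec construction of $\lambda^+$ in two stages. First, recall Iwaniec's well-factorable variant $\widetilde{\lambda}^+$, which is supported on squarefree $d = p_1 p_2 \cdots p_r$ with $p_1 > \cdots > p_r$ satisfying a Rosser-type condition and an auxiliary factorization condition, so that \eqref{eq:linearwell} holds with main term $F$ unchanged. I would then define $\widetilde{\lambda}^*$ by imposing one further restriction on the support: each $d$ is required to admit a factorization $d = d_1 d_2$ with $(d_1,d_2)$ lying in a prescribed region of size-pairs tailored to enable an equidistribution estimate at level $x^{10/17-\eps}$. Away from this new restriction, $\widetilde{\lambda}^*$ coincides with $\widetilde{\lambda}^+$.

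For the equidistribution claim (1), I would split the sum $\sum_{d\le D}\widetilde{\lambda}^*(d)(\pi(x;d,a)-\pi(x)/\phi(d))$ into three ranges: $d \le x^{1/2-\eps}$, handled by Bombieri--Vinogradov \eqref{eq:BV}; $x^{1/2-\eps} < d \le x^{7/12-\eps}$, handled by Maynard's extension of \eqref{eq:BFIlinear} using the well-factorable structure that $\widetilde{\lambda}^*$ inherits from $\widetilde{\lambda}^+$; and the critical new range $x^{7/12-\eps} < d \le x^{10/17-\eps}$, where the specific factorization $d = d_1 d_2$ forced by the support of $\widetilde{\lambda}^*$ allows one to reduce to bilinear sums over $d_1, d_2$ in ranges amenable to dispersion-method and Deshouillers--Iwaniec Kloosterman-sum estimates. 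The numerology $10/17$ is exactly what one obtains by optimizing the allowable factor-sizes against the current state of the art for such Kloosterman bounds.

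For the sieve bound (2), the extra restriction imposed on $\widetilde{\lambda}^*$ enlarges the main term, so one must quantify this loss. Using the Buchstab identity, $F^*(s) - F(s)$ can be expressed as a sum over iterations indexed by the prime factor sequences of $d$ whose configurations are ruled out by the new triple-factor condition but retained by $\widetilde{\lambda}^+$. This reduces the ratio $F^*(s)/F(s)$ to a ratio of explicit multiple integrals over a simplex cut out by linear inequalities in $\log p_i / \log D$. By explicit majorization (or a direct numerical evaluation of the truncated Buchstab iteration) on the range $1 \le s \le 3$, one obtains the bound $F^*(s)/F(s) \le 1 + 8.1 \cdot 10^{-5}$.

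The main obstacle is striking the correct balance in Step (2): the new restriction must be tight enough that every surviving $d$ admits a factorization enabling the level-$10/17$ equidistribution, yet loose enough that the ruled-out configurations contribute only a tiny $O(10^{-4})$ loss to the Buchstab iteration. Since the Rosser--Iwaniec weights are already optimal, \emph{any} additional constraint increases the main term; calibrating this trade-off, and proving the resulting explicit bound on $F^*/F$ uniformly in $s \in [1,3]$, is the delicate quantitative core of the argument.
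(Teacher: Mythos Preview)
Your high-level architecture is right: one modifies the support of the linear sieve weights so that every surviving $d$ has a good factorization, then (1) follows from known equidistribution input and (2) requires bounding the loss from the excluded configurations via Buchstab-type integrals. But two concrete points in your mechanism do not match what actually works.

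First, the factorization you propose is bilinear, $d=d_1d_2$. This is essentially well-factorability, and for well-factorable weights the known level of distribution is only $x^{4/7}$ (Bombieri--Friedlander--Iwaniec); there is no bilinear splitting that pushes the level to $x^{10/17}$ via dispersion/Kloosterman bounds alone. The paper instead uses a \emph{triple} factorization $d=d_1d_2d_3$ subject to a specific system of four inequalities (the ``programmably factorable'' condition). This condition is precisely what Maynard's theorem requires as input to deliver level $x^{3/5}$. So the equidistribution step is not a new harmonic-analysis argument in the range $x^{7/12}<d\le x^{10/17}$; it is an application of Maynard's existing result, and the actual work lies in a combinatorial verification that every $d$ in the modified support $\mathcal D^*$ admits such a triple factorization.

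Second, the numerology $10/17=7/12+1/204$ does not come from optimizing Kloosterman exponents. It arises from the combinatorics of the support set: at level $x^{7/12+\eta}$, Maynard's original argument fails for certain $d=p_1\cdots p_r$ with $p_1\approx\cdots\approx p_6\approx D^{1/7}$ or $p_1\approx p_2\approx D^{2/7}$, $p_3\approx p_4\approx D^{1/7}$. The paper identifies the precise obstructing families $\mathcal P_4,\mathcal P_6$ (defined by explicit linear inequalities in the $\log p_i$), excludes them from the support, and shows that this characterization is complete only while $\eta<1/204$. The loss $F^*(s)-F(s)$ is then an integral over the polytopes $P_4,P_6$, of volume $O(\eta^4)$ and $O(\eta^6)$ respectively, giving $F^*-F=O(\eta^5)$; the constant $1.000081$ is the value at $\eta=1/204$.

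So: your Step (2) analysis is morally correct, but your Step (1) would not go through as written. The missing ingredients are (i) the triple rather than double factorization, plugged into Maynard's black-box equidistribution theorem, and (ii) the explicit identification of the obstructing sets $\mathcal P_4,\mathcal P_6$ whose exclusion both enables the triple factorization up to $\eta=1/204$ and determines the numerical loss.
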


The key feature of Theorem \ref{thm:etalinsieve} is to obtain equidistribution up to level $x^{\frac{10}{17}}$ at the cost of only a tiny loss in the main term. See Theorem \ref{thm:Iwaniecprogrammable} and Proposition \ref{prop:lvlalpha1} for full technical statements and additional variations that may be of independent interest.

\subsection{Application to twin primes} 

We expect that Theorem \ref{thm:etalinsieve} should give numerous improvements to sieve bounds related to the primes. As proof of concept in this direction, we give a new upper bound for the count of twin primes up to $x$, denoted $\pi_2(x)$. Recall Hardy and Littlewood \cite{HL} conjectured the asymptotic formula
\begin{align}
\pi_2(x) \ \sim \ \frac{2x}{(\log x)^2}\prod_{p>2}\frac{1-2/p}{(1-1/p)^{2}} =: \Pi(x).
\end{align}

\begin{theorem}\label{thm:twinbound}
As $x$ tends to infinity, we have
\begin{align*}
\pi_2(x) \ \lesssim \ 3.29956\,\Pi(x).
\end{align*}
\end{theorem}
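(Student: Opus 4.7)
The plan is to apply Theorem \ref{thm:etalinsieve} directly to a twin-prime sieve problem to obtain an upper bound near $3.4\,\Pi(x)$, then reduce to the claimed $3.29956\,\Pi(x)$ by transporting Wu's 2004 weighted-sieve / Buchstab iteration to our new level of distribution $x^{10/17}$, which plays the role of Wu's $x^{4/7}$ throughout.

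First I set $\mathcal{A}=\{p+2:p\le x\text{ prime}\}$, so that $|\mathcal{A}|\sim\pi(x)$ and $g(d)=\1_{(d,2)=1}/\phi(d)$ on squarefree $d$; the error sum in Theorem \ref{thm:etalinsieve}(2) then coincides with the quantity bounded by Theorem \ref{thm:etalinsieve}(1) at level $D=x^{10/17-\eps}$ (up to irrelevant $\log x$ factors). Choosing $z=x^{1/2}$ gives $s=\log D/\log z=20/17\in[1,3]$, so $F(s)=2e^\gamma/s=17e^\gamma/10$. A Mertens-type computation combined with the Euler product defining the twin-prime constant $C_2$ in $\Pi(x)=2C_2\cdot x/(\log x)^2$ yields
\begin{align*}
|\mathcal{A}|\prod_{p<z}\big(1-g(p)\big)\ \sim\ 2e^{-\gamma}\,\Pi(x).
\end{align*}
Theorem \ref{thm:etalinsieve}(2) then gives $\pi_2(x)\le S(\mathcal{A},z)\lesssim 2e^{-\gamma}\cdot 1.000081\cdot\tfrac{17e^\gamma}{10}\,\Pi(x)\approx 3.4003\,\Pi(x)$, which must be sharpened further.

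To close the gap from $3.4003$ to $3.29956$, I would invoke Buchstab's identity
\begin{align*}
S(\mathcal{A},z)=S(\mathcal{A},z_0)-\sum_{z_0\le p<z}S(\mathcal{A}_p,p),
\end{align*}
upper-bounding $S(\mathcal{A},z_0)$ by Theorem \ref{thm:etalinsieve}(2) at an optimally chosen $z_0<z$, and lower-bounding the inner sum either via the lower-bound linear sieve (the analogue of Theorem \ref{thm:etalinsieve} for the Jurkat--Richert function $f$, which the paper's construction should furnish in parallel) or via a further Buchstab decomposition into pieces that individually admit well-factorable weights. Each piece should inherit a well-factorable form, so the equidistribution estimate at level $x^{10/17}$ applies term-by-term; this is Wu's scheme lifted to our higher level of distribution.

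The main obstacle is the combinatorial and numerical optimization. One must tune $z_0$ (and, if needed, further nested Buchstab thresholds), verify that all intermediate weights remain well-factorable so Theorem \ref{thm:etalinsieve}(1) applies piecewise, and track which error terms exploit the new level $x^{10/17}$ versus those that need only classical Bombieri--Vinogradov at $x^{1/2-\eps}$. The specific constant $3.29956$ emerges as the output of this numerical optimization --- structurally identical to Wu's $3.3994$, but with all appearances of $4/7$ replaced by $10/17$.
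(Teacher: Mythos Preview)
Your high-level instinct---improve the level of distribution and rerun a Wu-style argument---is correct in spirit, but the proposal as written misses two essential ingredients that the paper actually relies on, and without them the numbers will not close.

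First, the switching principle is absent from your sketch. Your Buchstab decomposition only proposes upper and lower linear-sieve bounds on the pieces $S(\mathcal A_p,p)$; that alone cannot push the constant meaningfully below $2/\theta=17/5$. The paper instead invokes Wu's 21-term weighted sieve inequality (Lemma~\ref{lem:WuII42}), and for the terms $\Gamma_9,\ldots,\Gamma_{21}$ it \emph{switches} to sieving sets of the form $\mathcal B=\{p_1p_2p_3m-2\le x\}$. Equidistribution for these switched sets requires a separate result (Proposition~\ref{prop:JMprogramprimes} / Corollary~\ref{cor:lvlalphak}) for products of primes in short boxes, not just for $\pi(x;d,a)$. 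Your proposal does not mention switching or the auxiliary equidistribution it demands, and that omission is fatal to reaching $3.3$.

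Second, the paper does \emph{not} simply substitute $10/17$ for $4/7$ in Wu's formulae. The actual proof of Proposition~\ref{prop:wu} never uses the modified weights $\widetilde\lambda^*$ of Theorem~\ref{thm:etalinsieve} at all; instead it uses the original Iwaniec weights $\widetilde\lambda^\pm$ together with a \emph{variable}, anatomy-dependent level of distribution (Proposition~\ref{prop:lvlalpha1}, Corollary~\ref{cor:piecewiseeta}). After iterating Buchstab down to threshold $x^\epsilon$, the smooth pieces enjoy level $\theta_\epsilon=(3-\epsilon)/5\approx 3/5$, and the pieces with three larger prime factors get level $\theta(t_1,t_2,t_3)$ depending on those primes. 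The functions $G(c)$, $\overline G(c)$, $G_n$ encoding the main term are built from these variable levels, and this is where most of the numerical gain over $17/5$ comes from. A uniform replacement $4/7\mapsto 10/17$ would not reproduce the constant $3.29956$. Relatedly, your hoped-for lower-bound analogue of Theorem~\ref{thm:etalinsieve} is not constructed in the paper; the lower bounds are handled via $\widetilde\lambda^-$ and Proposition~\ref{prop:lvlalpha1}.
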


Theorem \ref{thm:twinbound} gives a $2.94\%$ refinement from the previous record bound of Wu \cite{WuII}. For reference, this gives the largest percentage improvement since the work of Bombieri, Friedlander, and Iwaniec \cite{BFI1}. See below for a chronology of the known upper bounds on $\pi_2(x)/\Pi(x)$. Also see Siebert \cite{Sei}, Riesel--Vaughan \cite[Lemma 5]{RVaughan} for numerically explicit forms of Selberg's bound \cite{Selb}.

\vspace{1.5mm}

\begin{center}
\begin{tabular}{c|l|l}
Year & Author(s) & $\pi_2(x)/\Pi(x)\,\lesssim$\\
\hline    
1919 & Brun \cite{Brun} & O(1)\\
1947 & Selberg \cite{Selb} & 8\\
1964 & Pan \cite{Pan}  & 6\\
1966 & Bombieri--Davenport \cite{BD}  & 4\\
1978 & Chen \cite{Chen} & 3.9171\\
1983 & Fouvry--Iwaniec \cite{FI}  & $3.7777\cdots=34/9$\\
1984 & Fouvry \cite{Fouvry} & $3.7647\cdots=64/17$\\
1986 & Bombieri--Friedlander--Iwaniec \cite{BFI1} & 3.5\\
1986 & Fouvry--Grupp \cite{FG} & 3.454\\
1990 & Wu \cite{WuI} & 3.418\\
2003 & Cai--Lu \cite{CL} & 3.406\\
2004 & Wu \cite{WuII} & 3.39951
\end{tabular}
\end{center}

\vspace{1.mm}

The main ingredients for these results come from applying sieve bounds to the set $\mathcal A = \{p+2 : p\le x\}$, and using equidistribution of primes in arithmetic progressions to handle remainder terms. Bombieri--Davenport obtained $\pi_2(x)/\Pi(x)\,\lesssim\, 4$ as a consequence of the Bombieri--Vinogradov theorem \eqref{eq:BV} and a standard sieve upper bound of level $x^{\frac{1}{2}-\eps}$. More generally, if one proves level of distribution $x^{\theta-\eps}$ then one immediately obtains $\pi_2(x)/\Pi(x)\,\lesssim\, 2/\theta$. Bombieri--Friedlander--Iwaniec proved $\pi_2(x)/\Pi(x)\,\lesssim\, 7/2$ by the well-factorable variant \eqref{eq:BFIlinear} level of distribution $x^{\frac{4}{7}-\eps}$, together with the linear sieve with well-factorable remainder \eqref{eq:linearwell}.

The other key ingredient to subsequent improvements is the {\it switching principle}, introduced in Chen's celebrated result that there are infinitely many primes $p$ such that $p+2$ has at most two prime factors \cite{Chenthm}. The basic insight is to use a weighted sieve inequality to split the problem into multiple cases, apply sieve bounds to $\mathcal A = \{p+2 : p\le x\}$ in certain cases, and then reinterpret the remaining cases as new sieving problems for switched sets $\mathcal B = \{m-2 \le x\}$ where the numbers $m$ are constructed from $\mathcal A$ (as prescribed depending on the case).

\subsection{Outline of main ideas in Theorem \ref{thm:etalinsieve}}

Maynard's new equidistribution results show equidistribution of the primes with sieve weights $\widetilde{\lambda}^+(d)$, provided $d=p_1\cdots p_r$ is restricted to suitably well-factorable integers. Unfortunately, the original linear sieve weights only partially satisfy these well-factorable conditions. In particular for $\eta>0$, when looking at the linear sieve of level $x^{\frac{7}{12}+\eta}$, some integers $d$ in its support do not satisfy the conditions, which means that $x^{\frac{7}{12}}$ is the limit for the linear sieve given our current equidistribution technology. Nevertheless, the key observation here is that only a few exceptional $d$ fail to satisfy these conditions. Moreover up to level $x^{\frac{10}{17}}$, i.e. $\eta<1/204$, the anatomy of exceptional $d$ may be precisely characterized in terms of $\eta$ (given specifically as $\mathcal P_4$, $\mathcal P_6$ in \eqref{eq:defP4P6}). In particular, as $\eta>0$ grows the family of exceptional integers contribute $O(\eta^5)$ to the sieve bound. However, we note this characterization breaks down when $\eta\ge 1/204$, and the contribution becomes considerably larger and more complicated.
 
As such we carefully revise the construction of the linear sieve, altering a few particular inclusion-exclusion steps in order to avoid the exceptional integers $d$ with bad factorizations. Once these terms no longer contribute to the sieve, this produces a worse and more complicated main term, but since there are only a very small number of such terms the resulting loss is small. And since these modified weights now satisfy stronger factorization properties in their support, we can now leverage the full strength of Maynard's equidistribution results.

\section*{Notation}
We use the Vinogradov $\ll$ and $\gg$ asymptotic notation, and the big oh $O(\cdot)$ and $o(\cdot)$ asymptotic notation. We use $f\sim g$, $f \lesssim g$, and $f \gtrsim g$ to denote $f = (1+o(1)) g$, $f \le (1+o(1)) g$, and $f \ge (1+o(1)) g$, respectively. Dependence on a parameter will be denoted by a subscript.

The letter $p$ will always be reserved to denote a prime number, $\pi(x)$ is the prime counting function, and $\pi(x;d,a)$ is the count of primes up to $x$ congruent to $a$ (mod $d$). We use $\varphi$ to denote the Euler totient function, $\mu$ the M\"obius function, and $e(x) := e^{2\pi ix}$ the complex exponential. We use $\1$ to denote the indicator function of a statement. For example, for a set $A$ denote
\[\1_{a\in A} = 
\begin{cases}
1, & \text{if }a\in A,\\
0, & \text{else.}
\end{cases},
\qquad
\1_{a_1,\ldots,a_i\in A}^{a_0\notin A} = 
\begin{cases}
1, & \text{if }a_1,\ldots,a_i\in A \ \text{and }a_0\notin A,\\
0, & \text{else.}
\end{cases}\]

Finally, we refer to various sieve weights referred places throughout the article, so we take a moment to list them here:

We generically write $\lambda$ to denote a sequence of weights in $\{0,\pm1\}$. In particular, $\lambda^+$ and $\lambda^-$ refer to the (upper and lower bound) weights of the linear sieve, given by restrictions of the M\"obius function, $\lambda^\pm (d) =\mu(d)\1_{d\in\mathcal D^\pm}$. Analogously, the modified (upper bound) linear sieve weights $\lambda^*$ are given by $\lambda^* (d) =\mu(d)\1_{d\in\mathcal D^*}$. Here the support sets $\mathcal D^\pm$ and $\mathcal D^*$ are defined in \eqref{eq:Dplusdef} and \eqref{eq:Dstardef}.
We also write $\lambda^{(r)}$ to refer to the weights $\lambda^+$ or $\lambda^-$ (and $\mathcal D^{(r)}$ to refer to $\mathcal D^+$ or $\mathcal D^-$), depending on whether $r$ is odd or even.

The well-factorable weights $\widetilde{\lambda}^\pm$ are defined in \eqref{eq:lambdapm}. Following Iwaniec, this construction involves certain auxiliary weights at intermediate steps, namely, $\lambda_{(D_1,\ldots, D_r)}$ defined in \eqref{eq:lamD1D},
and $\lambda_{(D_1,\ldots, D_r)}^{(r)} = \lambda_{(D_1,\ldots, D_r)} \ast \lambda^{(r)}$ defined in \eqref{eq:lamD1Dr}. The analogous construction starting from $\lambda^*$ gives modified weights $\widetilde{\lambda}^*$, defined in \eqref{def:tildelambdastar}.

\section{Technical setup and results}


\subsection{Factorization of weights and their level of distribution}

\begin{definition}[well-factorable]\label{def:wellfactor}
Let $Q\in\R_{\ge1}$. A sequence $\lambda(q)$ is {\bf well-factorable of level $Q$}, if for every factorization $Q=Q_1Q_2$ into $Q_1,Q_2\in\R_{\ge1}$, there exist sequences $\gamma_1,\gamma_2$ such that
\begin{enumerate}
\item $|\gamma_1(q_1)|,|\gamma_2(q_2)| \le 1$ for all $q_1,q_2\in\N$,
\item $\gamma_i(q) =0$ if $q\notin [1,Q_i]$ for $i=1,2$,
\item We have $\lambda=\gamma_1\ast\gamma_2$, i.e.,
\begin{align*}
\lambda(q) = \sum_{q=q_1q_2} \gamma_1(q_1)\gamma_2(q_2).
\end{align*}
\end{enumerate}
\end{definition}

In \cite[Theorem 10]{BFI1}, Bombieri--Friedlander--Iwaniec established level of distribution $x^{\frac{4}{7}-\eps}$ with well-factorable weights.
\begin{theorem}[Bombieri--Friedlander--Iwaniec \cite{BFI1}] \label{thm:BFIwell}
Fix any $a\in\Z$ and let $A,\eps>0$. For any well-factorable sequence $\lambda$ of level $Q\le x^{\frac{4}{7}-\eps}$, we have
\begin{align*}
\sum_{\substack{q\le Q\\(q,a)=1}}\lambda(q) \,\Big(\pi(x;q,a)-\frac{\pi(x)}{\phi(q)}\Big) \ \ll_{a,A,\eps} \ \frac{x}{(\log x)^A}.
\end{align*}
\end{theorem}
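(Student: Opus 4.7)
The plan is to reduce the equidistribution statement to bilinear sum estimates via a combinatorial identity, and then exploit the well-factorability of $\lambda$ to treat the resulting Type II sums through Linnik's dispersion method, ultimately invoking deep bounds for averaged Kloosterman sums.

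First I would apply Heath-Brown's (or Vaughan's) identity to decompose the von Mangoldt function on $[1,x]$ into $O((\log x)^{O(1)})$ bilinear convolutions of two kinds: \emph{Type I} sums $\sum_{m\sim M}\alpha_m\sum_{n\sim N}\1_{mn\equiv a\bmod q}$ with one of the two variables smooth, and \emph{Type II} sums $\sum_{m\sim M}\sum_{n\sim N}\alpha_m\beta_n\1_{mn\equiv a\bmod q}$ with both coefficient sequences arbitrary and $MN\asymp x$. The Type I pieces can be handled by Poisson summation and extensions of the Bombieri--Vinogradov theorem, which push the level well beyond the $x^{1/2}$ barrier, so the entire weight of the argument sits in the Type II regime.

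For the Type II piece, I would use the well-factorable structure to split the modulus optimally: pick $Q=Q_1Q_2$ with $Q_i$ chosen to balance the forthcoming estimates, and write $\lambda(q)=\sum_{q_1q_2=q}\gamma_1(q_1)\gamma_2(q_2)$, which effectively lets one treat each modulus as having a prescribed factorization. Then I would apply Cauchy--Schwarz in the $n$-variable (dispersion), square out, and expand the congruence condition; the diagonal contributions match the expected main term, while the off-diagonals, after completion and Poisson summation in $n$, reduce to averages of incomplete Kloosterman sums whose moduli involve $q_1,q_1',q_2$ with averaging over several of these variables.

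The crucial input at this stage is the Deshouillers--Iwaniec estimate for multilinear averages of Kloosterman sums, derived from Kuznetsov's trace formula and bounds on Fourier coefficients of Maass cusp forms; this is the principal technical obstacle, and arranging the resulting sum so that its shape matches the hypotheses of one of their theorems is precisely what forces the permitted ranges on $Q_1,Q_2$ and on $M,N$. I would then optimize the parameters $M,N,Q_1,Q_2$ subject to $MN\asymp x$ and to those admissible ranges, and verify that the level $x^{4/7-\eps}$ emerges from balancing the constraints; any Type II range that does not fit is redistributed by a further application of the combinatorial identity, a delicate but essentially bookkeeping exercise.
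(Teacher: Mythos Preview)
The paper does not contain its own proof of this statement: Theorem~\ref{thm:BFIwell} is quoted as \cite[Theorem 10]{BFI} and used as a black box, so there is nothing in the paper to compare your argument against. Your sketch is a faithful outline of the original Bombieri--Friedlander--Iwaniec proof (combinatorial identity, Type I/II split, dispersion on the Type II sums exploiting the well-factorable splitting $q=q_1q_2$, and the Deshouillers--Iwaniec Kloosterman averages), so as a summary of the cited source it is accurate; just be aware that for the purposes of this paper the theorem is an imported result rather than something proved here.
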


Maynard \cite{JM2} considered a natural strengthening of well-factorable sequences.

\begin{definition}[triply well-factorable]
Let $Q\in\R_{\ge1}$. A sequence $\lambda(q)$ is {\bf triply well-factorable of level $Q$}, if for every factorization $Q=Q_1Q_2Q_3$ into $Q_1,Q_2,Q_3\in\R_{\ge1}$, there exist sequences $\gamma_1,\gamma_2,\gamma_3$ such that
\begin{enumerate}
\item $|\gamma_1(q_1)|,|\gamma_2(q_2)|,|\gamma_3(q_3)| \le 1$ for all $q_1,q_2,q_3\in\N$,
\item $\gamma_i(q) =0$ if $q\notin [1,Q_i]$ for $i=1,2,3$,
\item We have $\lambda=\gamma_1\ast\gamma_2\ast\gamma_3$, i.e.,
\begin{align*}
\lambda(q) = \sum_{q=q_1q_2q_3} \gamma_1(q_1)\gamma_2(q_2)\gamma_3(q_3).
\end{align*}
\end{enumerate}
\end{definition}

The definitions of well-factorable and triply well-factorable sequences are quite natural and relatively simple from a conceptual standpoint. In \cite[Theorem 1.1]{JM2}, Maynard obtains powerful equidistribution results for triply well-factorability that are beyond the scope of well-factorability. Unfortunately, triply well-factorability is too restrictive a condition for us to produce Theorem \ref{thm:etalinsieve}. As such we are forced to identify the precise mechanism that enables Maynard's equidistribution results, and extract the following technical definition that is implicit in \cite{JM2}.\footnote{
Indeed, the definition of programmably factorable in the special case $Q_3=1$ gives the implicit condition (which is implied by well-factorable) that enables Bombieri--Friedlander--Iwaniec to get equidistribution \eqref{eq:BFIlinear}. Also see Lemma 5 in \cite{FG}.}

\begin{definition}[programmably factorable]
Let $0<\delta<10^{-5}$. For $x\in\R_{>1}$, a sequence $\lambda(q)$ is {\bf programmably factorable of level $Q$ (relative to $x$, $\delta$)}, if for every $N\in [x^{2\delta},x^{\frac{1}{3}+\delta/2}]$ there exists a factorization $Q=Q_1Q_2Q_3$ with $Q_1,Q_2,Q_3\in\R_{\ge1}$, satisfying the system
\begin{align}\label{eq:defprogram}
Q_1 &\le\; Nx^{-\delta}, \nonumber\\
N^2 Q_2 Q_3^2 &\le\; x^{1-\delta},\\
N^2 Q_1 Q_2^4 Q_3^3 &\le\; x^{2-\delta}, \nonumber\\
N Q_1 Q_2^5 Q_3^2 &\le\; x^{2-\delta}. \nonumber
\end{align}
And for every such factorization $Q=Q_1Q_2Q_3$ there exist sequences $\gamma_1,\gamma_2,\gamma_3$ such that
\begin{enumerate}
\item $|\gamma_1(q_1)|,|\gamma_2(q_2)|,|\gamma_3(q_3)| \le 1$ for all $q_1,q_2,q_3\in\N$,
\item $\gamma_i(q) =0$ if $q\notin [1,Q_i]$ for $i=1,2,3$,
\item We have $\lambda=\gamma_1\ast\gamma_2\ast\gamma_3$, i.e.,
\begin{align*}
\lambda(q) = \sum_{q=q_1q_2q_3} \gamma_1(q_1)\gamma_2(q_2)\gamma_3(q_3).
\end{align*}
\end{enumerate}
\end{definition}

Programmable factorability is the key technical definition in this article. It is named in allusion to the linear programming-type system of inequalities \eqref{eq:defprogram} that the factors satisfy. The diagram below displays the various implications among the definitions.

\vspace{3mm}
\begin{align*}
\lambda \ & \text{is {\bf triply well-factorable} of level }Q.  \qquad \Longrightarrow \qquad  \lambda \text{ is {\bf well-factorable} of level }Q.\\
& \\
& \qquad\qquad\qquad\Big\Downarrow\\
& \\
\lambda \ & \text{is {\bf programmably factorable} of level }Q \text{ (relative to $x$, $\delta$)}.
\end{align*}
\vspace{3mm}

In the key result \cite[Theorem 1.1]{JM2}, Maynard extended the level of distribution up to $Q<x^{\frac{3}{5}}$ for programmably factorable weights. Note that level $x^{\frac{3}{5}}$ is the natural barrier for \eqref{eq:defprogram} to admit a solution.

\begin{theorem}[Maynard \cite{JM2}]\label{thm:Maynardprogram}
Fix any $a\in\Z$ and let $A,\eps>0$. For any programmably factorable sequence $\lambda$ of level $Q\le x^{\frac{3}{5}-\eps}$ (relative to $x$, $\eps/50$), we have
\begin{align*}
\sum_{\substack{q\le Q\\(q,a)=1}}\lambda(q) \,\Big(\pi(x;q,a)-\frac{\pi(x)}{\phi(q)}\Big) \ \ll_{a,A,\eps} \ \frac{x}{(\log x)^A}.
\end{align*}
\end{theorem}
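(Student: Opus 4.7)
The plan is to observe that the Theorem is essentially a restatement of Maynard's Theorem 1.1 in \cite{JMII}: the definition of programmable factorability was \emph{extracted} precisely so that the system \eqref{eq:defprogram} and the convolution structure $\lambda=\gamma_1*\gamma_2*\gamma_3$ match the hypotheses that drive Maynard's dispersion argument. So the ``proof'' amounts to unpacking Maynard's paper and checking that every place he uses a factorization of the modulus, he only requires what our definition provides.

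Concretely, first I would apply a Heath--Brown identity of bounded level to $\Lambda(n)\mathbf 1_{n\le x}$, reducing the problem to estimating sums of bilinear form $\sum_{(q,a)=1}\lambda(q)\sum_{mn\equiv a(q)}\alpha(m)\beta(n)$ against their expected main terms, with $\alpha,\beta$ supported in dyadic intervals. A standard combinatorial reorganization puts the ``short'' variable into a length $N\in[x^{2\delta},x^{1/3+\delta/2}]$ — which is why the definition quantifies over this range of $N$. For each $N$ the hypothesis supplies a factorization $Q=Q_1Q_2Q_3$ and a compatible decomposition $\lambda=\gamma_1*\gamma_2*\gamma_3$, which lets us write $q=q_1q_2q_3$ freely in subsequent manipulations.

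Second, I would split into two regimes. When $Q_1\le Nx^{-\delta}$ (the first inequality in \eqref{eq:defprogram}), a Type~I bound handles the sum: concentrate the small modulus $q_1$ and apply a Bombieri--Vinogradov-type estimate on the longer variable. Otherwise we are in the Type~II regime, where the dispersion method applies: Cauchy--Schwarz (in different variables and with different groupings of the $q_i$'s) reduces the off-diagonal to incomplete sums of Kloosterman sums, which one estimates by Deshouillers--Iwaniec bounds via the Kuznetsov formula and the spectral theory of automorphic forms. The three distinct Cauchy--Schwarz routes — using the three-fold factorization in essentially the three inequivalent ways — generate precisely the remaining three inequalities in \eqref{eq:defprogram}. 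The three-factor structure is what allows Maynard to push past the BFI barrier at $x^{4/7}$, since the extra factor $Q_3$ provides the flexibility needed to redistribute Cauchy--Schwarz losses across the full range of $N$.

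Finally, one verifies that the system \eqref{eq:defprogram} is feasible for every admissible $N$ exactly when $Q\le x^{3/5-\eps}$: passing to logarithms in $(\log Q_1,\log Q_2,\log Q_3)$ it becomes a linear program, and $3/5$ is the threshold at which the feasible polytope collapses for extremal $N\approx x^{1/3}$. The main technical obstacle is naturally the dispersion step that produces the four inequalities; this is the deep input taken from \cite{JMII}, and the only conceptual novelty here is recognizing that \emph{just} the convolution structure on the support of $\lambda$, together with the inequalities \eqref{eq:defprogram}, suffices — no global ``triply well-factorable'' hypothesis is needed, which will be crucial for the modified linear sieve weights constructed later.
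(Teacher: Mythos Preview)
Your high-level plan matches the paper exactly: the paper does not reprove this result, but simply cites \cite[Theorem~1.1]{JMII} together with the remark that although Maynard stated his theorem for triply well-factorable sequences, the proof in fact only uses the programmably-factorable hypothesis. So the core observation---that the definition of programmable factorability was reverse-engineered from the precise inputs to Maynard's dispersion argument---is exactly right, and nothing more is required.

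That said, your sketch of Maynard's argument contains a structural misreading. You describe a dichotomy ``when $Q_1\le Nx^{-\delta}$ we are in the Type~I regime, otherwise Type~II.'' But the first inequality $Q_1\le Nx^{-\delta}$ is not a case hypothesis: by definition it holds for \emph{every} $N$ in the range, simultaneously with the other three inequalities. Its role is to allow the factor $q_1$ of the modulus to be absorbed on the same side as the short Dirichlet variable of length $N$ (this is the basic setup common to \emph{all} of Maynard's estimates, not just a Type~I case). The genuine case split occurs downstream in the Heath--Brown decomposition, and the \emph{remaining three} inequalities are the conditions under which Maynard's separate Type~I / Type~II estimates (Propositions~5.1 and~5.2 in \cite{JMII}) cover all the resulting bilinear ranges. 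Your intuition that the three-factor structure provides extra Cauchy--Schwarz flexibility beyond BFI is correct, but the bookkeeping of which inequality feeds which estimate is not as you describe. Since the paper only cites the result, this does not affect correctness here---but if you intend to present the sketch as a proof outline, that paragraph should be reworked.
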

\begin{remark}
\cite[Theorem 1.1]{JM2} was stated for triply-factorable sequences, but its proof in fact gives the result for programmably factorable sequences.
\end{remark}

Note the weights $\widetilde{\lambda}^+$ are composed of well-factorable---but not neccessarily programmably factorable---sequences of given level $D$. Nevertheless, Maynard showed the upper bound weights $\widetilde{\lambda}^+$ of sieve level $D=x^{\frac{7}{12}-\eps}$ are programmably factorable of level $D\le Q=x^{\frac{3}{5}-\eps}$ (relative to $x$, $\eps/50$). By Theorem \ref{thm:Maynardprogram} this gives \cite[Theorem 1.2]{JM2} below.

\begin{corollary}[Maynard \cite{JM2}]\label{cor:May712}
For any fixed $a\in\Z$ and $A,\eps>0$, the weights $\widetilde{\lambda}^+$ from
\eqref{eq:wellwts} of sieve level $D= x^{\frac{7}{12}-\eps}$ satisfy
\begin{align*}
\sum_{\substack{d\le D\\(d,a)=1}} \widetilde{\lambda}^+(d) \,\Big(\pi(x;d,a)-\frac{\pi(x)}{\phi(d)}\Big) \ \ll_{a,A,\eps} \ \frac{x}{(\log x)^A}.
\end{align*}
\end{corollary}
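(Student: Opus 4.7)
The plan is to derive Corollary \ref{cor:May712} directly from Theorem \ref{thm:Maynardprogram}: it suffices to establish that $\widetilde{\lambda}^+$ of sieve level $D = x^{7/12 - \eps}$ is programmably factorable of level $Q = D \le x^{3/5 - \eps}$ relative to $(x, \eps/50)$. This cleanly splits into two subtasks: solving the linear programming system \eqref{eq:defprogram}, and producing a compatible threefold convolution decomposition of the weights.

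For the linear programming system, set $\delta = \eps/50$. For each $N = x^n$ with $n \in [2\delta, 1/3 + \delta/2]$, I propose the factorization $D = Q_1 Q_2 Q_3$ given by
\begin{equation*}
Q_1 = x^{n - \delta}, \qquad Q_2 = x^{1/4 - \eps}, \qquad Q_3 = x^{1/3 - n + \delta},
\end{equation*}
so that $Q_1 Q_2 Q_3 = x^{7/12 - \eps} = D$. Direct substitution reduces the four inequalities of \eqref{eq:defprogram} to the exponent bounds $n - \delta \le n - \delta$, $11/12 - \eps + 2\delta \le 1 - \delta$, $2 + 2\delta - 4\eps \le 2 - \delta$, and $23/12 + \delta - 5\eps \le 2 - \delta$, all comfortably true since $\delta = \eps/50$. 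The third is the binding inequality: its left-hand exponent $2 + 2\delta - 4\eps$ approaches $2$ as $\eps \to 0$, which is precisely why $x^{7/12}$ emerges as the natural threshold for this decomposition strategy.

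For the threefold decomposition, one appeals to the combinatorial structure of Iwaniec's construction of $\widetilde{\lambda}^+$: each weight is a signed sum over factorizations $d = p_1 \cdots p_r$ with $p_1 > \cdots > p_r$ subject to explicit size constraints on partial products. Iwaniec's original argument clusters these primes into two groups to establish well-factorability of level $D$. Maynard's observation is that for the specific $(Q_1, Q_2, Q_3)$ produced above, the primes in each Iwaniec summand may instead be grouped into three clusters of sizes $Q_1$, $Q_2$, $Q_3$, yielding the desired convolution $\widetilde{\lambda}^+ = \gamma_1 \ast \gamma_2 \ast \gamma_3$ with the required support.

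The main obstacle is the decomposition step. The LP is elementary arithmetic, and the exponent $7/12$ is essentially forced by $N^2 Q_1 Q_2^4 Q_3^3 \le x^2$ at the endpoint $N = x^{1/3}$. The substantive work lies in systematically regrouping the primes $p_j$ in each Iwaniec summand into three clusters of the prescribed sizes, for every $N$ in the range, while respecting Iwaniec's partial-product constraints. The tightest case is $N$ near $x^{1/3}$, where $Q_3$ approaches $1$ and the latitude in the grouping becomes minimal.
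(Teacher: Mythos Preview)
Your overall strategy is correct and matches the paper: reduce to showing each summand $\lambda_j^+$ of $\widetilde{\lambda}^+$ is programmably factorable, then invoke Theorem~\ref{thm:Maynardprogram}. Your LP verification for the proposed $(Q_1,Q_2,Q_3)$ is also arithmetically sound. However, you have inverted the logic of the substantive step, and this creates a genuine gap.

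The real content is the decomposition, and this is \emph{not} established by fixing $(Q_1,Q_2,Q_3)$ in advance and then appealing to Maynard. What Maynard actually proves (the paper's cited \cite[Proposition~9.1]{JMII}, restated just after Corollary~\ref{cor:May712}) is the reverse: for each $d=p_1\cdots p_r\in\mathcal D^+(D)$ and each $N$, one constructs a factorization $d=d_1d_2d_3$ \emph{directly} satisfying the system~\eqref{eq:programmsupport+}. The sizes $(Q_1,Q_2,Q_3)$ are then \emph{read off} from the induced partition of the prime indices (as in Lemma~\ref{lem:factorDstarr}), and they vary from one box $(D_1,\ldots,D_r)\in\mathbf D_r^+$ to another. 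Your proposal instead asks that every box admit a partition into your \emph{single} prescribed triple $(x^{n-\delta},\,x^{1/4-\eps},\,x^{1/3-n+\delta})$; this is a strictly stronger demand than programmable factorability requires, is not what Maynard proved, and you have not supplied an argument for it. In particular, the claim ``Maynard's observation is that for the specific $(Q_1,Q_2,Q_3)$ produced above, the primes \ldots\ may be grouped into three clusters of sizes $Q_1,Q_2,Q_3$'' misattributes the content of \cite[Proposition~9.1]{JMII}. The nontrivial combinatorics (case analysis on the sizes of $p_1,p_2,\ldots$ relative to the interval $[x^{1/6},x^{1/4}]$, plus a greedy step) is precisely what you have glossed over; once that factorization is in hand, the LP system is automatic and your separate verification becomes redundant.
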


Later, in Proposition \ref{prop:lvlalpha1}, we shall obtain technical improvements of Corollary \ref{cor:May712} for Iwaniec's weights $\widetilde{\lambda}^\pm$ (both upper and lower), in special cases where equidistribution is restricted to moduli which are smooth, or otherwise amenable to programmable factorization.

We may summarize the definitions and results of the section up to this point as follows:
\vspace{3mm}
\begin{align*}
\lambda \ &\text{is {\bf triply well-factorable} of level }Q.  \qquad \qquad \quad  \lambda \ \text{is {\bf well-factorable} of level }Q.\\
& \ \bullet \text{equidistributed for }Q< x^{\frac{3}{5}}.   
\qquad\qquad\qquad\qquad\quad \bullet \text{equidistributed for }Q< x^{\frac{4}{7}}.\\
& \ \bullet \text{can take }\lambda = \widetilde{\lambda}^+ \text{ for }D\le\, Q^{\frac{2}{3}}.   
\qquad\qquad\qquad\qquad \bullet \text{can take }\lambda = \widetilde{\lambda}^+ \text{ for }D\le\, Q.\\
& \\
\lambda \ & \text{is {\bf programmably factorable} of level }Q \text{ (relative to $x$, $\delta$)}.\\
& \ \bullet\text{equidistributed for }Q< x^{\frac{3}{5}}.\\
& \ \bullet \text{can take }\lambda = \widetilde{\lambda}^+ \text{ for }D\le\, Q< x^{\frac{7}{12}}.   
\end{align*}
\vspace{3mm}

For each type of sequence, we have outlined their corresponding levels of distribution, and the levels at which the type is satisfied by the upper bound weights for the linear sieve. Observe well-factorability is flexible enough to accommodate the linear sieve to any level, but has weaker equidistribution. On the other hand, triple well-factorability has stronger equidistribution, but is too rigid to accommodate the linear sieve (at nontrivial levels). Finally, programmable factorability also has strong equidistribution in addition to (nontrivially) accommodating the linear sieve, though at the cost of conceptual technicality.

\begin{remark} 
In general, $\lambda$ well-factorable of level $Q$ directly implies $\lambda$ triply well-factorable of level $Q^{\frac{2}{3}}$.\footnote{Indeed, take any factorization $Q=Q_1Q_2Q_3$, with (say) $Q_1\ge Q_2\ge Q_3 \ge 1$. Note $Q_3\le Q^{1/3}$.
If $\lambda$ is well-factorability of level $Q/Q_3=Q_1Q_2$, there are sequences $\gamma_1, \gamma_2$ supported on $[1,Q_1]$, $[1,Q_2]$ with $\lambda = \gamma_1\ast \gamma_2 = \gamma_1\ast \gamma_2\ast \delta$. Here $\delta(q)=\1_{q=1}$. Hence $\lambda$ is triply well-factorable of level $\inf_{Q=Q_1Q_2Q_3}Q/Q_3 \ge Q^{2/3}$.}
In particular, for $\lambda=\widetilde{\lambda}^+$ the triply well-factoble level $Q^{\frac{2}{3}}< x^{\frac{2}{5}}$ is sharp.\footnote{Indeed, consider the factorization $Q=Q_1Q_2Q_3$ with $(Q_1,Q_2,Q_3)=(Q^{\frac{1}{3}-\eps},Q^{\frac{1}{3}-\eps},Q^{\frac{1}{3}+2\eps})$. Then for $q=p_1p_2p_3$ of size $p_1,p_2\sim Q^{\frac{1}{3}}$, $p_3\sim Q^{\frac{1}{9}}$, we see $p_1,p_2 > Q_1=Q_2$. Thus all sequences $\gamma_i$ supported on $Q_i$ satisy $\gamma_1\ast \gamma_2\ast \gamma_3(q) = 0$. In particular $\gamma_1\ast \gamma_2\ast \gamma_3\neq \widetilde{\lambda}^+$. }
\end{remark}

\subsection{Sieve theory setup and bounds} \label{sec:sievesetup}

\par
\vspace{.5mm}

We recall the standard sieve-theoretic notation. Given a finite set $\mathcal{A}\subset \N$, set of primes $\mathcal P$, and a threshold $z>0$, we define $\mathcal{A}_d = \{n\in \mathcal{A}: d\mid n\}$ and remainder $r_\mathcal{A}$ via
\begin{align*}
|\mathcal{A}_d| = g(d)|\mathcal A| + r_\mathcal{A}(d),
\end{align*}
where $g$ is a multiplicative function, with $0\le g(p)<1$ for $p\in \mathcal P$ (we assume $g(p)=0$ if $p\notin\mathcal P$). Also define $P(z) = \prod_{p<z,p\in \mathcal P}p$ and $V(z) = \prod_{p\mid P(z)}\big(1-g(p)\big)$. The central object of interest is the {\it sifted sum} 
\begin{align}
S(\mathcal{A},z) = S(\mathcal{A},\mathcal P,z) = \sum_{n\in \mathcal{A}}\1_{(n,P(z))=1}.
\end{align}

Later for our application of interest, we will set $g(d)=1/\phi(d)$. For now, it suffices for us to assume for all $2\le w\le z$,
\begin{align}\label{eq:Vwz}
\frac{V(w)}{V(z)} = \prod_{\substack{w\le p<z\\ p\in\mathcal P}}\big(1-g(p)\big) \ = \ \frac{\log z}{\log w}\bigg(1 + O\Big(\frac{1}{\log w}\Big)\bigg).
\end{align}
\begin{remark}
The proof of the upper bound for the standard linear sieve only requires a one-sided inequality for $V(w)/V(z)$, whereas our modification requires the above two-sided condition \eqref{eq:Vwz}.
\end{remark}

The basic result which we shall adapt is the linear sieve with well-factorable remainder, as in \cite[Theorem 12.20]{Opera}.
\begin{theorem}[Iwaniec \cite{Opera}]\label{thm:wellfactor}
Let $\eps>0$ and $D>1$ be sufficiently small and large, respectively. Then for $s \ge 1$ and $z=D^{1/s}$, we have
\begin{align*}
S(\mathcal A, z) \ & \le \ |\mathcal A|V(z)\big(F(s) + O(\eps)\big) \ + \ \sum_{d\mid P(z)}\widetilde{\lambda}^+(d)\, r_{\mathcal A}(d),\\
S(\mathcal A, z) \ & \ge \ |\mathcal A|V(z)\big(f(s) + O(\eps)\big) \ - \ \sum_{d\mid P(z)}\widetilde{\lambda}^-(d)\, r_{\mathcal A}(d),
\end{align*}
where the implied constant only depends that of \eqref{eq:Vwz}. Here the weights $\widetilde{\lambda}^\pm$ are
\begin{align}\label{eq:wellwts}
\widetilde{\lambda}^\pm(d) = \sum_{j\le\exp(\eps^{-3})}\lambda_j^\pm(d)
\end{align}
for some well-factorable sequences $\lambda_j^\pm$ of level $D$. The functions $F,f:\R^+\to\R$ satisfy the system of delay-differential equations
\begin{align}\label{eq:delaydiff}
sF(s) & = 2e^\gamma \qquad[s\le3] & \big(sF(s)\big)' & = f(s-1), \nonumber\\
sf(s) & = 0 \quad \qquad[s\le2]  & \big(sf(s)\big)' & = F(s-1).
\end{align}
\end{theorem}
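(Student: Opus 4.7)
My plan is to follow the three-part strategy of Friedlander--Iwaniec \cite{Opera}: first construct combinatorial sieve weights yielding the correct main term, then decompose them into well-factorable pieces, and finally bound the number of pieces by $\exp(\eps^{-3})$.

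First I would derive the Rosser--Iwaniec weights $\lambda^\pm$ by iterating the Buchstab identity $S(\mathcal A,z) = |\mathcal A| - \sum_{p<z,\,p\in\mathcal P} S(\mathcal A_p, p)$. Writing $d=p_1 p_2\cdots p_k$ with its prime factors ordered $p_1 > p_2 > \cdots > p_k$, truncate the Buchstab unfolding at the first index $m$ where the Iwaniec cutoff $p_1\cdots p_{m-1}\, p_m^3 \le D$ fails; keeping even-length contributions gives $\lambda^+$ and odd-length contributions give $\lambda^-$, both $\{-1,0,1\}$-valued and supported on $d\le D$ dividing $P(z)$. Substituting $|\mathcal A_d|=g(d)|\mathcal A|+r_{\mathcal A}(d)$ peels off the remainder sum, and the main term $\sum_d \lambda^\pm(d)\, g(d)$ reduces via the continuous Buchstab recursion together with the regularity hypothesis \eqref{eq:Vwz} to $V(z)(F(s)+O(\eps))$ and $V(z)(f(s)+O(\eps))$ respectively, where $F,f$ solve the delay-differential system \eqref{eq:delaydiff}.

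The main obstacle is the well-factorable decomposition of $\lambda^\pm$, since the raw weights are not themselves well-factorable. My approach is to refine the support into \emph{dyadic types}: fix a logarithmic grid $0=a_0<a_1<\cdots<a_J=1$ on $[0,1]$ of mesh $\approx \eps^3$, and for each $d=p_1\cdots p_k$ in the support assign the tuple $J=(j_1,\ldots,j_k)$ determined by $\log p_i/\log D\in [a_{j_i-1},a_{j_i}]$. Let $\lambda_J^\pm$ denote the restriction of $\lambda^\pm$ to this dyadic type. The crux is that each $\lambda_J^\pm$ is well-factorable of level $D$: given any factorization $D=D_1 D_2$, apply a greedy splitting that picks the largest $r$ with $d_1:=p_1\cdots p_r\le D_1$ and sets $d_2:=p_{r+1}\cdots p_k$. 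The dyadic type $J$ pins down each $p_i$ up to a multiplicative factor $D^{\eps^3}$, which is enough rigidity to reconstruct the ordered factorization of $d$ from any pair $(d_1,d_2)$ in its image, so $\lambda_J^\pm$ can be realized as a convolution $\gamma_1 * \gamma_2$ with $|\gamma_i|\le 1$ and $\mathrm{supp}(\gamma_i)\subseteq [1,D_i]$.

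Finally I would bound the number of relevant dyadic types. Standard Mertens estimates show that $d$ with more than $O(\eps^{-1})$ prime factors contribute negligibly to $\sum_d \lambda^\pm(d)\,g(d)$ (the loss being absorbed into the $O(\eps)$ main-term error), so combined with the $O(\eps^{-3})$ available dyadic intervals the number of contributing types is at most $(\eps^{-3})^{O(\eps^{-1})} = \exp\!\big(O(\eps^{-1}\log\eps^{-1})\big)$, comfortably $\le \exp(\eps^{-3})$ for $\eps$ small. Re-indexing the pieces $\lambda_J^\pm$ by a single integer $j$ then yields the decomposition $\widetilde\lambda^\pm=\sum_{j\le\exp(\eps^{-3})}\lambda_j^\pm$ with each $\lambda_j^\pm$ well-factorable of level $D$, as required.
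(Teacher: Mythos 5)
Your overall architecture (combinatorial truncation for the main term, a multiplicative grid on the prime factors to cut $\lambda^\pm$ into rigid pieces, then a counting argument) is the right shape, but there is a genuine gap at the step where you bound the number of pieces: you discard $d$ with more than $O(\eps^{-1})$ prime factors and claim the cost is absorbed into $O(\eps)V(z)$. This fails because $d\mid P(z)$ may have arbitrarily many \emph{small} prime factors; the discarded contribution is bounded by $\sum_{\omega(d)>K,\,d\mid P(z)}g(d)\approx (\log\log z)^K/K!$, and making this $\ll \eps V(z)\approx \eps/\log z$ forces $K\gg \log\log z$, which is unbounded as $D\to\infty$ with $\eps$ fixed. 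With $K\asymp\log\log z$ your count of dyadic types becomes $(\eps^{-3})^{O(\log\log z)}$, which is not $\le\exp(\eps^{-3})$. The construction in \cite{Opera} (reproduced in this paper's adaptation, see the proof surrounding \eqref{eq:propIwaniecstar}) avoids this by first passing to $\tilde{\mathcal A}=(a_n\1_{(n,P(u))=1})$ with $u=D^{\eps^2}$ and disposing of all primes below $u$ via the Fundamental Lemma with a beta sieve of tiny level $D^{\eps}$; this contributes one extra convolution factor $\lambda^{(r)}(b)$ supported on $b\le D^{\eps}$, and only then are the remaining primes (all $\ge D^{\eps^2}$, hence at most $\eps^{-2}$ of them) boxed into ranges $(D_j,D_j^{1+\tau}]$ with $\tau=\eps^9$, giving the $\exp(\eps^{-3})$ count in \eqref{eq:Drsize}. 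Without this pre-sieve your decomposition has no way to control the small primes.

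Two further points need repair. First, your specific greedy splitting (take the largest prefix $p_1\cdots p_r\le D_1$) does not in general produce $d_2\le D_2$: e.g.\ $p_1=D^{0.4}$, $p_2=D^{0.25}$, $p_3=D^{0.15}$ satisfies $p_1\cdots p_{l-1}p_l^2\le D$ for all $l$, yet for $D_1=D^{0.3}$ the largest prefix is empty and $d_2=D^{0.8}>D^{0.7}=D_2$. The correct argument appends the primes one at a time to whichever factor still has room, using $p_j^2\le D/(p_1\cdots p_{j-1})=(D_1/d_1)(D_2/d_2)$. Second, for $\lambda_J^\pm=\gamma_1\ast\gamma_2$ to hold with $|\gamma_i|\le1$ the partition of the indices between $d_1$ and $d_2$ must be a function of the type $J$ alone, not of $d$ (otherwise the supports of $\gamma_1,\gamma_2$ overlap across different splittings and the convolution overcounts); this is why the factorization is carried out on the box endpoints $(D_1,\ldots,D_r)$, with the resulting boundary discrepancies and the slight enlargement of the level by $D^{O(\tau r)}$ charged to the $O(\eps)$ main-term error rather than to the factorization.
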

\begin{remark}
See Iwaniec \cite[Theorem 1]{Ilinear} for an alternate formulation and proof, which gives sharper quantitative bounds than $O(\eps)$. However, it is more technical than necessary for our purposes.
\end{remark}

The main result of this article is the following modification of the linear sieve with programmably factorable remainder.

\begin{theorem}\label{thm:Iwaniecprogrammable}
Let $\mathcal A$ be a finite set of positive integers with density function $g(d)$ satisfying \eqref{eq:Vwz}, and $F(s)$ the function defined by the system \eqref{eq:delaydiff}. Let $\eps>0$ and $x>1$ be sufficiently small and large, respectively. Then for $\eta\ge0$, $D=x^{\frac{7}{12}+\eta}$, $s \ge 1$, and $z=D^{1/s}$, we have
\begin{align*}
S(\mathcal A, z) \ \le \ |\mathcal A|V(z)\big(F^*(s) + O(\eps)\big) \ + \ \sum_{d\mid P(z)}\widetilde{\lambda}^*(d)\, r_{\mathcal A}(d),
\end{align*}
where the implied constant only depends that of \eqref{eq:Vwz}. Here the weights $\widetilde{\lambda}^*$ are
\begin{align}\label{eq:programwts}
\widetilde{\lambda}^*(d) = \sum_{j\le\exp(\eps^{-3})}\lambda_j^*(d)
\end{align}
for some programmably factorable sequences $\lambda_j^*$ of level $D$ (relative to $x$, $\eps/50$). For $\eta<\frac{1}{204}$ we have $F^*(s) = F(s) + O(\eta^5)$, and $F^*(s) \, \le \, 1.000081\,F(s)$ for $1\le s\le 3$, $\eta=\frac{1}{204}$.
\end{theorem}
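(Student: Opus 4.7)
The plan is to begin from Iwaniec's well-factorable decomposition $\widetilde{\lambda}^+ = \sum_j \lambda_j^+$ underlying Theorem \ref{thm:wellfactor}, and to surgically modify it so that each component becomes programmably factorable at level $D = x^{7/12 + \eta}$. Iwaniec's construction assembles $\widetilde{\lambda}^+$ from the Rosser--Iwaniec combinatorial sieve, so its support consists of integers $d = p_1 p_2 \cdots p_r$ with $z > p_1 > p_2 > \cdots > p_r$ obeying the Rosser truncation rules. For each such $d$ I would check the linear-programming system \eqref{eq:defprogram}: given $N$ in the prescribed range, is there an assignment of the $p_i$ into three blocks $q_1 q_2 q_3 = d$ with $q_i \le Q_i$ so that all four inequalities hold? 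A direct analysis in the exponent variables $\alpha_i = (\log p_i)/\log x$ shows that when $\eta = 0$ such an assignment exists for every supported $d$, recovering Corollary \ref{cor:May712}; as $\eta$ increases, a narrow family of exceptional $d$ emerges for which no admissible splitting exists at some critical $N$.

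For $\eta < 1/204$, I would show that this exceptional family is exhausted by two explicit anatomies, namely products of four or six primes with size constraints precisely matching $\mathcal P_4, \mathcal P_6$ in \eqref{eq:defP4P6}. These are exactly the configurations where the degrees of freedom in distributing the $p_i$ among the three blocks collapse. Geometrically, each anatomy occupies a polytope in $\alpha$-space whose volume is $O(\eta^5)$, since moving $\eta$ away from $0$ simultaneously loosens exactly five binding constraints. A separate case analysis at $\eta = 1/204$ then shows that a genuinely new kind of obstruction appears beyond that threshold, which is why the stated bound is limited to $\eta \le 1/204$.

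Next I would alter the weights on the exceptional $d$. At the Buchstab step in the Rosser--Iwaniec iteration that first introduces such a $d$, instead of recording $d$ with weight $\pm 1$ I would insert an extra Buchstab decomposition and re-sieve the resulting residual factor by a short truncated fundamental-lemma-style sieve, chosen so that each surviving product factors into three blocks obeying \eqref{eq:defprogram} uniformly in $N$. The two-sided asymptotic in \eqref{eq:Vwz}---which the classical upper-bound linear sieve does not need---is what allows the main-term cost of this replacement to be tracked cleanly in both directions, assembling into the altered function $F^*$. After the modification every remaining $d$ admits a programmably factorable splitting, and thus the new decomposition $\widetilde{\lambda}^* = \sum_j \lambda_j^*$ lies in the scope of Theorem \ref{thm:Maynardprogram}.

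Finally I would bound the main-term loss. The difference $F^* - F$ is a finite sum, indexed by the configurations in $\mathcal P_4$ and $\mathcal P_6$, of Buchstab-type integrals of $F$ or $f$ over the corresponding polytopes. Since $F, f = O(1)$ on the relevant ranges, the $O(\eta^5)$ volume bound yields $F^*(s) - F(s) = O(\eta^5)$, and the explicit constant $1.000081$ at $\eta = 1/204$ would follow from a numerical evaluation of the finitely many integrals. The hardest step, I expect, is the second one: exhaustively verifying that only the $\mathcal P_4, \mathcal P_6$ anatomies arise, and then arranging the replacement sub-sieve uniformly across all of them so that every component $\lambda_j^*$ is simultaneously programmably factorable for every $N$ in the admissible range.
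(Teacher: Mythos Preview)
Your overall architecture matches the paper: identify the exceptional anatomies $\mathcal P_4,\mathcal P_6$, modify the sieve to avoid them, apply Iwaniec's box decomposition plus the Fundamental Lemma, and compute the main-term loss as Buchstab-type integrals over the exceptional polytopes. The identification of $\mathcal P_4,\mathcal P_6$ and the final integral analysis are essentially what the paper does in Proposition~\ref{prop:factorD} and Section~4.1.

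The one place where your plan diverges from the paper, and where it is vague enough to be a potential gap, is the modification step itself. You propose, at an exceptional $d=p_1\cdots p_j\in\mathcal P_j$, to ``insert an extra Buchstab decomposition and re-sieve the residual by a fundamental-lemma sieve.'' But a further Buchstab step only appends smaller primes to $d$; the problematic prefix $p_1\cdots p_j$ remains, so it is not clear this produces integers that programmably factor for every $N$. The paper's mechanism is different and more specific: the modified support $\mathcal D^*$ in \eqref{eq:Dstardef} is defined so that on an exceptional branch one \emph{falls back to the lower level} $D_0=x^{7/12}$, i.e.\ one continues the Rosser iteration with the $\mathcal D^+(D_0)$ constraints rather than $\mathcal D^+(D)$. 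This works because (i) Maynard already established programmable factorability at level $D_0$, and (ii) the exceptional $p_1\cdots p_j$ happen to satisfy $p_1\cdots p_j\in\mathcal D^+(D_0)$, so the minimal index $l$ at which $p_1\cdots p_l\notin\mathcal D^*$ is still forced to be odd (Lemma~\ref{lem:mutolambda}). That odd-parity fact is exactly what guarantees $\sum_{d\mid n}\lambda^*(d)\ge \1_{n=1}$, i.e.\ that the modified weights remain an upper-bound sieve. Your proposal does not explain how parity is preserved, and a generic ``extra Buchstab plus fundamental lemma'' patch can easily destroy it. The main-term computation in the paper then becomes the clean difference $V^+(D_0/p_1\cdots p_j,p_j)-V^+(D/p_1\cdots p_j,p_j)$, which is where the two-sided hypothesis \eqref{eq:Vwz} enters and the $O(\eta^5)$ bound falls out.
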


Note Theorem \ref{thm:Maynardprogram}, applied to each $\lambda=\lambda_j^*$ above, immediately implies the following.

\begin{corollary}\label{cor:program}
Given any fixed $a\in\Z$ and $A$. For $\eta<\frac{1}{204}$, the weights $\widetilde{\lambda}^*$ as in \eqref{eq:programwts} of level $D=x^{\frac{7}{12}+\eta}$ satisfy
\begin{align*}
\sum_{\substack{d\le D\\(d,a)=1}}\widetilde{\lambda}^*(d) \,\Big(\pi(x;d,a)-\frac{\pi(x)}{\phi(d)}\Big) \ \ll_{a,A,\eta} \ \frac{x}{(\log x)^A}.
\end{align*}
\end{corollary}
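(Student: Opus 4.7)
The plan is to deduce Corollary \ref{cor:program} directly from Theorem \ref{thm:Iwaniecprogrammable} and Theorem \ref{thm:Maynardprogram} by applying the latter to each component sequence $\lambda_j^*$ separately, then summing over $j$.

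First, I would fix the compatibility of levels. For $\eta < 1/204$, a short calculation gives
\[
\tfrac{7}{12}+\tfrac{1}{204} \;=\; \tfrac{119}{204}+\tfrac{1}{204} \;=\; \tfrac{120}{204} \;=\; \tfrac{10}{17} \;<\; \tfrac{3}{5},
\]
so one may choose $\eps = \eps(\eta) > 0$ (for instance any $\eps < \min(1/85,\,1/60 - \eta)$) such that $D = x^{7/12+\eta} \le x^{3/5-\eps}$. With this choice of $\eps$, Theorem \ref{thm:Iwaniecprogrammable} furnishes the decomposition
\[
\widetilde{\lambda}^*(d) \;=\; \sum_{j\le \exp(\eps^{-3})}\lambda_j^*(d),
\]
where each $\lambda_j^*$ is programmably factorable of level $D$ relative to $(x,\eps/50)$. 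This is precisely the hypothesis required for Maynard's equidistribution theorem.

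Next, I would invoke Theorem \ref{thm:Maynardprogram} with the same $\eps$ for each individual sequence $\lambda_j^*$. This yields, for every fixed $a\in\Z$ and $A>0$,
\[
\sum_{\substack{d\le D\\(d,a)=1}} \lambda_j^*(d)\,\Big(\pi(x;d,a)-\tfrac{\pi(x)}{\phi(d)}\Big) \;\ll_{a,A,\eps}\; \frac{x}{(\log x)^A}
\]
for each $j \le \exp(\eps^{-3})$. Summing over the at most $\exp(\eps^{-3})$ values of $j$ (a bounded quantity depending only on $\eps$, hence only on $\eta$) and applying the triangle inequality yields the claimed bound $\ll_{a,A,\eta} x/(\log x)^A$.

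There is no substantive obstacle; the only point requiring care is the verification that $\eta < 1/204$ translates into $D$ lying strictly below Maynard's barrier $x^{3/5}$, with enough room to absorb the parameter $\eps$ used throughout Theorems \ref{thm:Iwaniecprogrammable} and \ref{thm:Maynardprogram} consistently. All the genuine work (constructing programmably factorable weights that simultaneously mimic the linear sieve main term) has already been done inside Theorem \ref{thm:Iwaniecprogrammable}.
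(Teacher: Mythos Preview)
Your proposal is correct and matches the paper's approach exactly: the paper states just before the corollary that ``Theorem \ref{thm:Maynardprogram}, applied to each $\lambda=\lambda_j^*$ above, immediately implies the following,'' which is precisely the decomposition-plus-summation argument you give. Your added verification that $\tfrac{7}{12}+\eta<\tfrac{10}{17}<\tfrac{3}{5}$ leaves room for a common choice of $\eps$ is the one detail the paper leaves implicit.
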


\section{Programmably factorable support}\label{sec:sievesupport}

The upper and lower bound weights $\lambda^\pm$ for the linear sieve of level $D$ are defined by
\begin{align*}
\lambda^\pm(d) = \mu(d)\1_{d\in \mathcal D^\pm},
\end{align*}
where $\mathcal D^\pm=\mathcal D^\pm(D)$ are the standard support sets
\begin{align}\label{eq:Dplusdef}
\mathcal D^+ & =  \{ p_1\cdots p_r \; : \; D^{1/2} \ge p_1 \ge \cdots \ge p_r, \text{ and } p_1\cdots p_{l-1}p_l^3 \le D \ \text{ for each odd }l \le r\},\\
\mathcal D^- & =  \{ p_1\cdots p_r \; : \; D^{1/2} \ge p_1 \ge \cdots \ge p_r,\text{ and }
p_1\cdots p_{l-1}p_l^3 \le D \ \text{ for each even }l \le r\}.\nonumber
\end{align}
We may also write $\mathcal D^{(r)}$ to denote $\mathcal D^+$ or $\mathcal D^-$, when $r$ is even or odd, respectively.

Observe that both sets satisfy the containment $\mathcal D^\pm(D) \subset \mathcal D^{\textnormal{well}}(D)$, where
\begin{align}\label{eq:defDwell}
\mathcal D^{\textnormal{well}} =  \{ p_1\cdots p_r \; : \; D^{1/2} \ge p_1 \ge \cdots \ge p_r,\text{ and }
p_1\cdots p_{l-1}p_l^2 \le D \ \text{ for each }l \le r\}.
\end{align}
We shall return to this observation later in the section.

In \cite{JM2}, Maynard deduces Corollary \ref{cor:May712} for $\widetilde{\lambda}^+$ from the general Theorem \ref{thm:Maynardprogram} by means of the following key result \cite[Proposition 9.1]{JM2} (along with a construction of Iwaniec we shall address in later sections), which programmably factorizes elements of the support $\mathcal D^+$.

\begin{proposition}[Maynard \cite{JM2}]
Let $0<\delta<10^{-3}$ and let $D=x^{\frac{7}{12}-50\delta}$, $N\in [x^{2\delta},x^{\frac{1}{3}+\delta/2}]$. Then every $d\in \mathcal D^+(D)$ has a factorization $d=d_1d_2d_3$ such that $d_1 \le\; Nx^{-\delta}$ and
\begin{equation}
\begin{split}\label{eq:programmsupport+}
N^2 d_2d_3^2 &\le\; x^{1-\delta},\\
N^2 d_1d_2^4d_3^3 &\le\; x^{2-\delta},\\
N d_1d_2^5d_3^2 &\le\; x^{2-\delta}.
\end{split}
\end{equation}
\end{proposition}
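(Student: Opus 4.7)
The plan is to construct $d = d_1 d_2 d_3$ greedily from the prime factorization $d = p_1 p_2 \cdots p_r$ written in decreasing order $p_1 \ge p_2 \ge \cdots \ge p_r$. First, let $k$ be the largest index (allowing $k = 0$) with $p_1 \cdots p_k \le Nx^{-\delta}$, and set $d_1 := p_1 \cdots p_k$. This immediately gives the required bound $d_1 \le Nx^{-\delta}$. In the degenerate case $k = r$ one takes $d_2 = d_3 = 1$, and the three target inequalities in \eqref{eq:programmsupport+} reduce to direct checks such as $N^2 \le x^{2/3+\delta} \le x^{1-\delta}$ and $Nd_1 \le N^2 \le x^{2-\delta}$, all of which follow from $N \le x^{1/3+\delta/2}$ and $\delta < 10^{-3}$.

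Assume $k < r$. The maximality of $k$ produces the lower bound $p_{k+1} > Nx^{-\delta}/d_1$, while the defining condition of $\mathcal D^+(D)$ furnishes the key upper bound $p_l \le \bigl(D/(p_1 \cdots p_{l-1})\bigr)^{1/3}$ at every odd $l \le r$. I split the remaining primes by a threshold: let $j \ge k$ be maximal with $p_{k+1} \cdots p_j \le T$, and set $d_3 := p_{k+1} \cdots p_j$ and $d_2 := p_{j+1} \cdots p_r$. The threshold $T$ is tuned to saturate Condition $N^2 d_2 d_3^2 \le x^{1-\delta}$: since $d_2 d_3 \le d/d_1 \le D/d_1$, we have $N^2 d_2 d_3^2 \le N^2 D d_3/d_1$, so taking $T$ roughly $x^{1-\delta} d_1/(N^2 D)$ (up to a constant) makes the first inequality hold whenever $d_3 \le T$, which is guaranteed by the greedy choice of $j$.

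Conditions 2 and 3 are then verified by combining (i) the product identity $d_1 d_2 d_3 = d \le D$, (ii) the bound $d_3 \le T$ from the greedy step, and (iii) the cubic support bound from $\mathcal D^+$ applied at the smallest odd index $l > j$, which controls the largest prime inside $d_2$ and hence $d_2$ itself via the estimate $d_2 \le D/(d_1 d_3)$ combined with $p_l \le \bigl(D/(p_1\cdots p_{l-1})\bigr)^{1/3}$. A secondary case split on the parities of $k$ and $j$, and on how close $d_1$ is to its maximum $Nx^{-\delta}$, handles the edge cases (notably when $p_{k+1}$ alone already exceeds $Nx^{-\delta}/d_1$ by a large factor, so $k$ jumps sharply). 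The main obstacle will be Condition 3, $Nd_1 d_2^5 d_3^2 \le x^{2-\delta}$, because the exponent $5$ on $d_2$ is extremely sensitive to the largest prime in $d_2$; closing it requires precisely the $\frac{1}{3}$-power support bound at the next odd index, which forces $d_2$ to be small enough that $d_2^5$ does not overwhelm the budget. The slack $50\delta$ in the hypothesis $D = x^{7/12-50\delta}$ is engineered exactly to absorb the cumulative $O(\delta)$ losses from the various greedy steps, so that the resulting linear-programming computation on the exponents closes at the target level of distribution $x^{7/12}$.
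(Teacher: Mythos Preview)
Your greedy scheme has a concrete failure. Take $\delta$ small, $p_1=p_2=D^{1/3}=x^{7/36-50\delta/3}$ (so $d=p_1p_2\in\mathcal D^+(D)$), and choose $N$ just above $p_1$, say $N=p_1x^{\delta/2}$. Then $p_1>Nx^{-\delta}$ forces $k=0$, $d_1=1$. Your threshold is $T=x^{1-\delta}d_1/(N^2D)\approx x^{1/36}$, which is smaller than $p_1\approx x^{7/36}$, so $j=0$, $d_3=1$, and $d_2=d=D^{2/3}$. Now Condition~3 reads $Nd_2^5=ND^{10/3}\approx x^{77/36}>x^{2-\delta}$, so the factorization fails. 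The culprit is that you bounded $d_2d_3\le D/d_1$ rather than $d_2d_3=d/d_1$ when setting $T$; with $d\ll D$ this throws away exactly the slack needed. Even after that fix, your Condition~3 argument is not written out: you invoke ``the cubic support bound at the next odd index'' to control $d_2$, but $d_2=p_{j+1}\cdots p_r$ is a product of many primes, and when $j+1$ is even the cubic bound does not apply there. The ``secondary case split on parities'' is asserted, not performed, and it is precisely where the difficulty lives.

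The paper does not reprove this proposition (it is cited from Maynard), but its generalization (Proposition~\ref{prop:factorD}) uses a different mechanism that you should compare with. The key is Lemma~\ref{lem:criterion}: one first identifies a \emph{middle} factor $b$ lying in the fixed window $[x^{1/6+2\eta},x^{1/4-3\eta}]$ (at $\eta=0$ this is $[x^{1/6},x^{1/4}]$), and only then factors the remainder $d/b=ac$ greedily with $a\le A$, $c\le D/(Ab)$. The window for $b$ is chosen so that all three inequalities in \eqref{eq:programmsupport+} collapse to the single constraint $c\le D/(Ab)$, which the two-variable greedy split handles cleanly via the $\mathcal D^+$ conditions. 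The case analysis (does $p_1$, or $p_2p_3$, or $p_1p_4$, etc., land in the window?) replaces your parity split. Your top-down greedy for $d_1$ does not naturally produce a middle factor in a fixed interval, which is why Condition~3 with its $d_2^5$ is so hard to close in your framework.
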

\begin{remark}
The level $x^{\frac{7}{12}}$ is sharp in this construction. Indeed, {\it heuristically speaking}, the linear sieve weights are not programmably factorable of level $D=x^{\frac{7}{12}+\eta}$ for any $\eta>0$, because the support set contains obstructing (families of) elements $d\in\mathcal D^+(D)$ of the form $d=p_1\cdots p_r$ where $p_1\approx \cdots \approx p_6 \approx D^{\frac{1}{7}}$, or where $p_1\approx p_2 \approx D^{\frac{2}{7}}$ and $p_3\approx p_4\approx D^{\frac{1}{7}}$. This heuristic description of the obstructions is made precise by the families $\mathcal P_4$, $\mathcal P_6$ (in \eqref{eq:defP4P6} below), and thereby tells us how we should restrict the support set in order to increase the level (namely, to $\mathcal D^*$ in \eqref{eq:Dstardef} below).
\end{remark}

For $\eta>0$, level $D = x^{\frac{7}{12}+\eta}$, we define the modified weights $\lambda^*=\lambda_\eta^*$,
\begin{align}\label{eq:lambstardef}
\lambda^*(d) = \mu(d)\1_{d\in \mathcal D^*},
\end{align}
for the support set $\mathcal D^*$,
\begin{align}\label{eq:Dstardef}
\mathcal D^* \ = \  \mathcal D^+(x^{\frac{7}{12}}) \cup \{p_1\cdots p_r\in\mathcal D^+(x^{\frac{7}{12}+\eta}) : p_1\cdots p_i\notin \mathcal P_i, i\le r, i\in\{4,6\}\}.
\end{align}
Here $\mathcal P_4$ and $\mathcal P_6=\mathcal P_{6,1}\cup \mathcal P_{6,2}$ are exceptional subsets of $\mathcal D^+(x^{\frac{7}{12}+\eta})$, given by
\begin{align}\label{eq:defP4P6}
\mathcal P_4 &= \{p_1\cdots p_4 \; : \;  p_1 < x^{\frac{1}{6}+2\eta} \ \text{ and } \ p_2p_4>x^{\frac{1}{4}-3\eta}\}, \nonumber\\
\mathcal P_{6,1} &= \{p_1\cdots p_6 \; : \; p_1p_2 < x^{\frac{1}{6}+2\eta} \ \text{ and } \  p_2p_3p_4>x^{\frac{1}{4}-3\eta} \ \text{ and } \ p_6>x^{\frac{1}{12}-5\eta}\},\\
\mathcal P_{6,2} &= \{p_1\cdots p_6 \; : \; p_1,\;p_2p_3 < x^{\frac{1}{6}+2\eta}\ \text{ and } \ p_1p_4,\;p_2p_3p_4 > x^{\frac{1}{4}-3\eta}  \ \text{ and } \ p_6>x^{\frac{1}{12}-5\eta}\}. \nonumber
\end{align}

The modified support set $\mathcal D^* = \mathcal D^*_\eta$ is understood to depend on $\eta>0$ (as do $\mathcal P_4$, $\mathcal P_6$), but we will suppress this for notational convenience.

In this section, we establish a programmable factorization of the elements of the support $\mathcal D^*$ provided $D<x^{\frac{10}{17}}$, i.e. $\eta < \frac{1}{204}$. This will serve as the key technical input for the proof of Theorem \ref{thm:Iwaniecprogrammable}.

\begin{proposition}[Factorization of elements of $\mathcal D^*$]\label{prop:factorD}
Let $0<\delta < 10^{-5}$, and take $0<\eta<\frac{1}{204}-3\delta$ and $N\in [x^{\delta}, x^{\frac{1}{3}-\delta/2}]$. If $d\in \mathcal D^*$ for $D = x^{\frac{7}{12}+\eta-50\delta}$, then we may factor $d=d_1 d_2 d_3$ such that $d_1\le Nx^{-\delta}$ and
\begin{equation}\label{eq:JMfactorable}
\begin{split}
N^2 \ d_2 d_3^2 & \ \le \ x^{1-\delta},\\
N^2 d_1 d_2^4 d_3^3 & \ \le \ x^{2-\delta},\\
N d_1 d_2^5 d_3^2 & \ \le \ x^{2-\delta}.
\end{split}
\end{equation}
\end{proposition}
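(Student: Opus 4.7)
The plan is to adapt Maynard's factorization argument for Proposition 3.1 to the extended support $\mathcal D^*$ at the elevated level $D = x^{\frac{7}{12}+\eta-50\delta}$. Proposition 3.1 already produces the desired factorization when $d \in \mathcal D^+(x^{\frac{7}{12}-50\delta})$, so the substantive work is to handle those $d$ whose membership in $\mathcal D^*$ requires the enlarged level, using in an essential way the hypothesis that no prefix $p_1\cdots p_i$ with $i \in \{4,6\}$ lies in $\mathcal P_4 \cup \mathcal P_6$.

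First I would write $d = p_1\cdots p_r$ with $p_1 \ge \cdots \ge p_r$, and set $\alpha_i := \log p_i/\log x$ and $\tau := \log A/\log x \in [\delta, \frac{1}{3}-\delta/2]$; the three inequalities in \eqref{eq:JMfactorable} then translate to linear constraints on the $\alpha_i$ and $\tau$. The candidate factorization $d = abc$ is built greedily: take $a$ to be a sub-product of consecutive $p_i$'s whose product sits just below $A$, and split the remaining primes into $b$ and $c$ at an explicit size threshold chosen to balance whichever of the three target inequalities is tightest. The monotonicity $p_1 \ge \cdots \ge p_r$ combined with the support constraint $p_1\cdots p_{l-1} p_l^3 \le D$ for odd $l$ controls the gaps between adjacent $\alpha_i$ and drives the verification of \eqref{eq:JMfactorable} in the bulk of regimes.

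The argument then splits into cases based on the sizes of the first few primes. If $p_1 \ge x^{1/3+\delta/2}$, or more generally if $p_1 p_2$ is sufficiently large, then setting $c = p_1$ (respectively $c = p_1$ and $b = p_2$) reduces matters to a residual integer of essentially smaller size to which the Maynard-style argument for $\mathcal D^+(x^{7/12})$ applies directly. In the remaining balanced regimes, where every $p_i$ is comparatively small, the greedy split (together with a local block-swap adjustment) succeeds unless the exponent vector $(\alpha_1,\ldots,\alpha_r)$ lands in one of a few narrow corners of the polytope cut out by the ordering and sieve-support conditions. The heart of the argument is to show that these obstructing corners are \emph{exactly} the regions encoded by $\mathcal P_4$ and $\mathcal P_{6,1} \cup \mathcal P_{6,2}$: the four-prime corner with $p_1 < x^{1/6+2\eta}$ but $p_2p_4 > x^{1/4-3\eta}$, and the two six-prime corners recorded in \eqref{eq:defP4P6}. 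Since $d\in\mathcal D^*$ excludes these prefixes, no obstruction occurs and the factorization exists.

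The principal obstacle I anticipate is the quantitative bookkeeping that pins down the threshold $\eta < \frac{1}{204}$. Observe that $\frac{7}{12} + \frac{1}{204} = \frac{10}{17}$, so $\eta = \frac{1}{204}$ is precisely the point at which $D = x^{10/17}$; it is also the point at which the defining exponents $\frac{1}{6}+2\eta$, $\frac{1}{4}-3\eta$, $\frac{1}{12}-5\eta$ inside $\mathcal P_4,\mathcal P_6$ become simultaneously tight, beyond which new obstruction configurations (e.g.\ balanced eight-prime products) can appear, as alluded to in the introduction. Concretely the verification reduces to a finite linear-programming feasibility check on the polytope of admissible exponent vectors: at each extremal vertex, one shows either that a local perturbation of the greedy split satisfies \eqref{eq:JMfactorable}, or that the vertex lies in $\mathcal P_4 \cup \mathcal P_6$ and is therefore excluded by the definition of $\mathcal D^*$. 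The enumeration is delicate, but once the cases are laid out each individual check is mechanical.
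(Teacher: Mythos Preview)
Your outline correctly identifies the overall shape---a case analysis on the sizes of the leading primes, with the $\mathcal P_4, \mathcal P_6$ exclusions ruling out exactly the obstructing configurations---but it misses the concrete mechanism that makes the factorization work, and your proposed case split contains an error.

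The key device in the paper is a criterion (Lemma~\ref{lem:criterion}): a factorization $d=abc$ satisfies \eqref{eq:JMfactorable} whenever $a\le A$, $c\le D/(Ab)$, and $b$ lies in the \emph{fixed} interval $[x^{1/6+2\eta}, x^{1/4-3\eta}]$, independent of $A$. This reduces the problem to locating some subproduct $b\mid d$ in that window; the remaining primes are then distributed between $a$ and $c$ greedily, using $p_j^2 \le D/(p_1\cdots p_{j-1})$ from the sieve-support inequalities. Your proposed construction---first choose $a$ as a consecutive subproduct just below $A$, then split the rest at an unspecified threshold---inverts this order and never isolates the $b$-window; without it, it is unclear how you would verify \eqref{eq:JMfactorable} uniformly over $A\in[x^\delta,x^{1/3-\delta/2}]$.

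Your case split is also off: the case $p_1 \ge x^{1/3+\delta/2}$ is vacuous, since $d \in \mathcal D^+(D)$ forces $p_1^3 \le D < x^{3/5}$ and hence $p_1 < x^{1/5}$ always. The paper's first cut is instead at $p_1 \gtrless x^{1/6+2\eta}$ (the left endpoint of the $b$-window), after which one tests in turn whether each of $p_2p_3$, $p_2p_4$, $p_1p_4$, $p_2p_3p_4$, $p_1p_2$, $p_1p_3$ lands inside or outside that window (Lemma~\ref{lem:reduce}). The bound $\eta<1/204$ enters through inequalities such as $(1/6+2\eta)+12\eta < 1/4-3\eta$ that arise in this chain, not merely as the point where the three exponents ``become simultaneously tight.'' The LP feasibility check you defer to is in fact the substance of the proof, and it is not automatic: it requires this specific sequence of subproduct tests together with a secondary greedy argument (Lemma~\ref{lem:greedy}) that separately disposes of the configurations with $p_6 < x^{1/12-5\eta}$.
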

On the first attempt working through technicalities, we encourage the reader to set $\delta=0$ in order to better view the key features.

Before proving the proposition, we need some lemmas. The first gives a general-purpose criterion to factor an integer $d$.

\begin{lemma}\label{lem:criterion}
Let $D = x^{\frac{7}{12}+\eta-50\delta}$ for $-\frac{1}{84}<\eta<\frac{1}{60}$. A factorization $d=d_1d_2d_3$ satisfies \eqref{eq:JMfactorable}, provided $d_1,d_2,d_3\ge1$ satisfy
\begin{gather}\label{eq:criterion}
d_2\in[x^{\frac{1}{6}+2\eta},x^{\frac{1}{4}-3\eta}],\\
d_1\le Nx^{-\delta}, \quad \text{and} \quad d_3\le D/Nd_2. \nonumber
\end{gather}
\end{lemma}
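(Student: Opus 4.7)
The plan is entirely computational. Each of the three inequalities in \eqref{eq:JMfactorable} will be reduced to a condition on $b$ alone by substituting the two available constraints $a\le A$ and $Abc\le D$ (the latter from $c\le D/(Ab)$), after which the interval hypothesis $b\in[x^{1/6+2\eta}, x^{1/4-3\eta}]$ finishes the job.

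For the first inequality, only $c\le D/(Ab)$ is needed: $A^2 b c^2 \le D^2/b$, so the claim becomes $b \ge D^2 x^{3\delta-1} = x^{1/6 + 2\eta - 97\delta}$, which is implied by $b \ge x^{1/6+2\eta}$ since $\delta\ge 0$. For the second, applying both constraints gives
\[
A^2 a b^4 c^3 \;\le\; A^3 b^4\cdot \frac{D^3}{A^3 b^3} \;=\; bD^3,
\]
so the claim reduces to $b \le x^{1/4 - 3\eta + 147\delta}$, implied by $b \le x^{1/4 - 3\eta}$. For the third, an analogous reduction gives $A a b^5 c^2 \le b^3 D^2$, so the claim becomes $b \le x^{5/18 - 2\eta/3 + 98\delta/3}$.

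The only substantive check — and the one I expect to pin down the hypothesis on $\eta$ — is comparing the third bound with the given upper bound on $b$. Writing $1/4 - 3\eta \le 5/18 - 2\eta/3 + 98\delta/3$ and using $1/4 - 5/18 = -1/36$, this simplifies to
\[
7\eta + 98\delta \;\ge\; -\tfrac{1}{12}, \qquad\text{i.e.}\qquad \eta \;\ge\; -\tfrac{1}{84} - 14\delta,
\]
which is exactly what the hypothesis $\eta > -1/84$ supplies (for any $\delta\ge 0$). This confirms that $-1/84$ is the sharp lower endpoint of the range of $\eta$ for which this factorization criterion works. The upper bound $\eta < 1/60$ plays only a structural role, ensuring $1/6+2\eta \le 1/4 - 3\eta$ so that the admissible interval for $b$ is nonempty. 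There is no genuine obstacle beyond this bookkeeping; the lemma is designed precisely so the three inequalities collapse cleanly under the two given constraints.
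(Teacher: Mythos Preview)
Your proof is correct and follows essentially the same approach as the paper: both use $a\le A$ and $Ac\le D/b$ to reduce each of the three inequalities in \eqref{eq:JMfactorable} to bounds of the form $D^2/b$, $bD^3$, $b^3D^2$ respectively, and then check these against the endpoints of the interval $[x^{1/6+2\eta},x^{1/4-3\eta}]$. The only cosmetic difference is that you solve for the resulting condition on $b$, whereas the paper substitutes the endpoint of $b$ directly into the exponent; your identification of the roles of $\eta>-1/84$ (sharp for the third inequality) and $\eta<1/60$ (nonemptiness of the $b$-interval) matches the paper exactly.
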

\begin{proof}
By \eqref{eq:criterion}, $Nd_3\le D/d_2$ and so
\begin{align*}
N^2 \ d_2 d_3^2 & \ \le \ D^2/d_2 \le x^{2(\frac{7}{12}+\eta-50\delta)-(\frac{1}{6}+2\eta)} = x^{1-\delta},\\
N^2 d_1 d_2^4 d_3^3 & \ \le \ D^3 d_2 \le x^{3(\frac{7}{12}+\eta-50\delta)+(\frac{1}{4}-3\eta)}
= x^{2-\delta},\\
N d_1 d_2^5 d_3^2 & \ \le \  D^2 d_2^3 \le 
x^{2(\frac{7}{12}+\eta-50\delta) + 3(\frac{1}{4}-3\eta)} = x^{\frac{23}{12}-7\eta-100\delta} < x^{2-\delta},
\end{align*}
using $\eta\in (-\frac{1}{84},\frac{1}{60})$. This gives \eqref{eq:JMfactorable}.
\end{proof}

The above criterion implies factorizations in the following special cases.

\begin{lemma}\label{lem:reduce}
Let $D = x^{\frac{7}{12}+\eta-50\delta}$ for $\eta<\frac{1}{60}$. For $r\ge4$, let $x^{\frac{1}{6}+2\eta} > p_1\ge\cdots\ge p_r$ be primes for which $d=p_1\cdots p_r\in \mathcal D^+(D)$. Suppose $d_2$ is one of the subproducts in $\{p_1p_4,p_2p_3,p_2p_4,p_2p_3p_4\}$. Then $d$ has a factorization $d=d_1d_2d_3$ satisfying \eqref{eq:JMfactorable}, provided
\begin{align*}
d_2\in [x^{\frac{1}{6}+2\eta},x^{\frac{1}{4}-3\eta}].
\end{align*}
\end{lemma}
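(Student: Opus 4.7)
The plan is to deduce the lemma from Lemma~\ref{lem:criterion}: for each permissible choice of $b$, I aim to construct $a, c \ge 1$ with $abc = d$, $a \le A$, and $c \le D/(Ab)$. Since $abc = d$ fixes $c = d/(ab)$, the inequality $c \le D/(Ab)$ rewrites as $a \ge Ad/D$. So the whole task reduces to exhibiting a subproduct $a$ of the primes dividing $d/b$ lying in the interval $[Ad/D,\, A]$.

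First, I would verify that this interval is nonempty and reachable. The bound $d \le D$ gives $Ad/D \le A$, so the interval is nonempty. The hypotheses $A \le x^{1/3-\delta/2}$, $b \le x^{1/4-3\eta}$, and $\eta < 1/60$ combine via elementary exponent arithmetic to yield $Ab \le x^{7/12-3\eta-\delta/2} \le D$, so $d/b \ge Ad/D$ and the full product of the primes outside $b$ already lies at or above the top of the interval.

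Next, I would run a greedy construction. List the primes dividing $d/b$ as $q_1 \ge q_2 \ge \cdots \ge q_m$, form partial products $a_i = q_1\cdots q_i$, and take the smallest $i$ with $a_i \ge Ad/D$. By minimality $a_{i-1} < Ad/D$, so $a_i < (Ad/D)\,q_i < (Ad/D)\,x^{1/6+2\eta}$. In the regime $d \le D\,x^{-1/6-2\eta}$, this automatically yields $a_i \le A$, so $a = a_i$ works and Lemma~\ref{lem:criterion} applies.

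The main obstacle is the complementary regime $d > D\,x^{-1/6-2\eta}$, in which the width $D/d$ of the target interval is less than the worst-case prime $x^{1/6+2\eta}$ and the naive greedy overshoots. Here the plan is to exploit the structure $d \in \mathcal D^+(D)$: the defining inequality $p_1\cdots p_{l-1} p_l^3 \le D$ at $l = r$ (if $r$ is odd) or $l = r-1$ (if $r$ is even) forces a small prime, respectively $p_r \le (D/d)^{1/2}$ or $p_{r-1} \le D/d$. Since $b$ involves only primes with index $\le 4$ and $r \ge 4$ (indeed $r$ must be larger here, since $d$ close to $D$ with each prime below $x^{1/6+2\eta}$ forces many factors), such a small prime survives in $d/b$. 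I would then either include or exclude this small prime to perturb the greedy $a_i$ by a factor $\le D/d$, which fits exactly inside the target interval $[Ad/D, A]$. The argument is then executed uniformly across the four cases $b \in \{p_1p_4,\, p_2p_3,\, p_2p_4,\, p_2p_3p_4\}$, with only the labelling of the remaining primes changing; the most delicate case is $b = p_2p_3p_4$, where $p_1$ remains among the available primes and drives the worst-case greedy jump, but the small-prime adjustment from the $\mathcal D^+(D)$ structure still lands $a$ in the required interval.
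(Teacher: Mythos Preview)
Your reformulation—seeking a subproduct $a$ of the primes in $d/b$ lying in $[Ad/D,\,A]$—is equivalent to what is needed, and the easy regime $d \le D x^{-1/6-2\eta}$ is fine. But the complementary regime has a genuine gap. You correctly extract a small prime $q^* \le D/d$ from the $\mathcal D^+$ condition, yet the proposed ``include or exclude $q^*$ to perturb $a_i$'' does not land in $[Ad/D,\,A]$. Since $q^*$ is among the smallest primes, it sits at the tail of your decreasing list and is \emph{not} in $a_i$ when the greedy overshoots; including it (i.e.\ passing to $a_{i-1}q^*$) yields something $<A$ but with no lower bound forcing $\ge Ad/D$. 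Iterating by appending further tail primes $q_{i+1},q_{i+2},\dots$ need not reach $Ad/D$ either, since the full tail product $(d/b)/q_i$ can itself fall below $Ad/D$: this happens whenever $A b q_i > D$, which is not excluded by the hypotheses (e.g.\ for $b=p_2p_3p_4$, $q_i=p_1$, and $A$ near $x^{1/3}$). A smaller issue: your claim $Ab\le D$ from ``$\eta<1/60$'' is off—the exponent comparison actually requires $4\eta\ge 49.5\delta$.

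The paper sidesteps the overshoot problem by never aiming at a single target interval. Instead it maintains two boxes $a\le A$ and $c\le C:=D/(Ab)$ simultaneously, observing that whenever a prime $q$ satisfies $q^2\le (A/a)(C/c)$ it must fit in at least one box. The $\mathcal D^+(D)$ inequalities supply exactly these bounds: $p_1^3\le D$ gives $p_1^2\le D^{2/3}\le AC$, and for $r\ge 5$ one has $p_r^2\le D/(p_1\cdots p_{r-1})=AC/(ac)$. A short induction on $r$ with base case $r=4$ (checking the four choices of $b$ directly, each leaving at most two primes to distribute) then finishes uniformly, with no overshoot analysis required.
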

\begin{proof}
Let $C=D/Nd_2$ and note either $p_1\le N$ or $p_1\le C$, since
\begin{align*}
p_1^2\le D^{\frac{2}{3}} = x^{\frac{2}{3}(\frac{7}{12}+\eta-50\delta)} < x^{\frac{1}{3}+4\eta-50\delta}\le D/d_2=NC.
\end{align*}

We proceed by induction on $r\ge4$. As the base case $r=4$, by Lemma \ref{lem:criterion} it suffices for each $b$ to factor $p_1\cdots p_4/d_2=d_1d_3$ for $d_1\le N, d_3\le C$. Indeed, this holds when $d_2=p_2p_3p_4$ since $p_1^2\le AC$, and similarly:
\begin{itemize}
\item If $d_2=p_2p_4$ then $p_3^2 \le D/p_1p_2p_3\le NC/p_1$ implies $p_1p_3=d_1d_3$ for some $d_1\le N$, $d_3 \le C$.
\item If $d_2=p_1p_4$ then $p_3^2 \le D/p_1p_2p_3\le NC/p_2$ implies $p_2p_3=d_1d_3$ for some $d_1\le N$, $d_3 \le C$.
\item If $d_2=p_2p_3$ then $p_4^2 \le D/p_1p_2p_3\le NC/p_1$ implies $p_1p_4=d_1d_3$ for some $d_1\le N$, $d_3 \le C$.
\end{itemize}

Now for $r\ge5$, we inductively assume a factorization $p_1\cdots p_{r-1} = d_1d_2d_3$ with $d_1\le N$, $d_3\le C$. Then $p_r^2 \le D/p_1\cdots p_{r-1} = NC/(ac)$ so either $d_1p_r\le N$ or $d_3 p_r\le C$, extending the factorization. Hence Lemma \ref{lem:criterion} applies again, and completes the proof.
\end{proof}

Finally, if the primes dividing $d$ are small enough, we may use the greedy algorithm to factor $d$ as follows.

\begin{lemma}\label{lem:greedy}
Let $D = x^{\frac{7}{12}+\eta-50\delta}$ for $\eta<\frac{1}{60}$. For $r\ge4$, let $x^{\frac{1}{6}+2\eta} > p_1\ge\cdots\ge p_r$ be primes for which $d=p_1\cdots p_r\in \mathcal D^+(D)$, and $p_6 < x^{\frac{1}{12}-5\eta}$ if $r\ge6$. Then $d$ has a factorization $d=abc$ satisfying \eqref{eq:JMfactorable}, provided there is a factorization $p_1p_2p_3p_4 = d_1d_2d_3$ satisfying
\begin{align}\label{eq:greedy}
d_1\le Nx^{-\delta}, \quad d_3\le x^{1-2\delta}/DN,\quad d_2\le D^2/x^{1-3\delta} = x^{\frac{1}{6}+2\eta+3\delta}.
\end{align}
\end{lemma}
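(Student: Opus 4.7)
My plan is to extend the hypothesized partition $p_1 p_2 p_3 p_4 = d_1 d_2 d_3$ to a full factorization $d = abc$ via a greedy distribution of the extra primes $p_5, \ldots, p_r$, and then verify the hypotheses of Lemma \ref{lem:criterion} to conclude \eqref{eq:JMfactorable}. Concretely, I initialize $a = d_1$, $b = d_2$, $c = d_3$ and process $p_5, p_6, \ldots, p_r$ in decreasing order. For each prime $p$ I apply the rule: if $b < x^{1/6+2\eta}$ and $bp \le x^{1/4-3\eta}$, multiply $p$ into $b$; else if $ap \le A$, multiply into $a$; else multiply into $c$. By construction, at termination $a \le A$ and $b \le x^{1/4-3\eta}$.

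The crux is the lower bound $b \ge x^{1/6+2\eta}$. Here the engine is that each $p_i$ with $i \ge 6$ satisfies $p_i \le p_6 < x^{1/12-5\eta}$, which exactly matches the log-width $(1/4 - 3\eta) - (1/6+2\eta) = 1/12 - 5\eta$ of the target interval $[x^{1/6+2\eta}, x^{1/4-3\eta}]$. Hence while $b < x^{1/6+2\eta}$, we automatically have $bp_i < x^{1/6+2\eta} \cdot x^{1/12-5\eta} = x^{1/4-3\eta}$, so rule~(i) applies and $p_i$ is absorbed into $b$, pushing it upward into the target window. The exceptional prime is $p_5$, which may be as large as $x^{1/6+2\eta}$; I split cases. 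If $d_2 p_5 \le x^{1/4-3\eta}$, then $p_5$ is absorbed into $b$ and the prior argument continues unchanged. Otherwise $p_5$ is routed to $a$ or $c$ first, and I show using \eqref{eq:greedy} and $d \le D$ that $d_2 \cdot p_6 \cdots p_r \ge x^{1/6+2\eta}$, so the remaining primes still bring $b$ into the window. The base case $r=4$ needs no distribution and follows by direct algebraic substitution of \eqref{eq:greedy} into \eqref{eq:JMfactorable}.

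For the last Lemma \ref{lem:criterion} hypothesis $c \le D/(Ab)$, equivalently $a \ge Ad/D$: any prime $p$ ultimately routed to $c$ must have failed rule~(ii), i.e.\ $ap > A$ at the moment of placement, so $a > A/p$ is close to $A$. A short calculation combining this with $d \le D$ and $\eta < 1/60$ yields $a \ge Ad/D$. With $a \le A$, $b \in [x^{1/6+2\eta}, x^{1/4-3\eta}]$, and $c \le D/(Ab)$ established, Lemma \ref{lem:criterion} delivers \eqref{eq:JMfactorable}. I expect the main obstacle to be the subcase where $p_5$ itself is routed into $c$: the resulting bound $a > A/p_5 \ge A x^{-(1/6+2\eta)}$ is much weaker than the generic $A x^{-(1/12-5\eta)}$, and I will need to exploit the constraint $d \in \mathcal D^+(D)$ to upper-bound $d$ accordingly so that $a \ge Ad/D$ still holds.
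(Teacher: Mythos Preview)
Your greedy is genuinely different from the paper's and, as stated, produces invalid factorizations. The paper runs a \emph{balanced} three-way greedy with thresholds $D_1=A$, $D_2=D^2/x=x^{1/6+2\eta}$, $D_3=x^{1-2\delta}/(DA)$ (so $D_1D_2D_3\approx D$): it places each $p_j$ into whichever slot still has room under $D_i$, and if all primes are absorbed it verifies \eqref{eq:JMfactorable} directly from $d_i\le D_i$ without ever invoking Lemma~\ref{lem:criterion}. Only when the greedy sticks (at an even index $j\ge 6$, by the $\mathcal D^+$ odd-index inequality $p_j^3\le D/(p_1\cdots p_{j-1})$) does it overshoot $D_2$ by exactly one prime $p_j<x^{1/12-5\eta}$, landing $e_2=d_2p_j$ in the window, and then distributes the remaining primes between $a$ and $c$ using $p_l^2\le D/(p_1\cdots p_{l-1})$.

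Your priority rule instead allows $b$ to grow all the way to $x^{1/4-3\eta}$ before touching $a$, and this breaks the balance needed for \eqref{eq:JMfactorable}. Concretely, take $\eta>0$ tiny, $\delta=0$, $A=x^{1/5}$, $r=5$, $p_1=x^{1/6}$, $p_2=p_3=p_4=p_5=x^{5/72}$, and $d_1=1$, $d_2=p_1$, $d_3=p_2p_3p_4$. All hypotheses hold: $d\in\mathcal D^+(D)$, $d_1\le A$, $d_2\le x^{1/6+2\eta}$, $d_3=x^{5/24}<x^{5/12}/A=D_3$. Your rule~(i) fires (since $d_2<x^{1/6+2\eta}$ and $d_2p_5=x^{17/72}<x^{1/4-3\eta}$), so $p_5\to b$, yielding $a=1$, $b=x^{17/72}$, $c=x^{5/24}$. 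But then $A^2bc^2=x^{2/5+17/72+5/12}>x^1$, so \eqref{eq:JMfactorable} fails outright. The paper's greedy instead places $p_5$ into $d_1$ (since $d_1p_5=x^{5/72}\le A$), keeping all $d_i\le D_i$ and finishing by direct verification.

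The underlying issue is twofold. First, you never treat the case where $b$ fails to reach $x^{1/6+2\eta}$; this is the paper's ``case~1'' and requires direct verification, not Lemma~\ref{lem:criterion}. Second, your argument for $c\le D/(Ab)$ assumes some prime was routed to $c$ (so that $a>A/p$); when $S_c=\emptyset$ you have no lower bound on $a$ beyond $a\ge d_1$, and $d_1$ can be $1$. Even when $S_c\ne\emptyset$, the inequality ``$a>A/p$ and $d\le D$ give $a\ge Ad/D$'' is false as written: you need $d\le D/p$, which requires the well-factorable bound $p_1\cdots p_{M-1}p_M^2\le D$ and a careful accounting of which primes ended up where.
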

\begin{proof}
Let $D_1=Nx^{-\delta}$, $D_2 = D^2/x^{1-3\delta}$, $D_3 = x^{1-2\delta}/(DN)$, so that $d_i\le D_i$ by assumption.

We now greedily append primes to $d_i$ while preserving $d_i\le D_i$ for all $i$, i.e. where at the $j$th step we replace $d_i\mapsto d_ip_j$ (for one of $i=1,2,3$) provided $d_ip_j\le D_i$. Starting from $j=5$, we stop either when we have exhausted all primes (i.e. $j=r$), or $d_ip_j> D_i$ for each $i=1,2,3$. In the former case, we have the desired $d_1d_2d_3 = d= p_1\cdots p_r$ and $d_i\le D_i$ so we easily get
\begin{align*}
D_1 &= Nx^{-\delta},\\
N^2 D_2D_3^2 & = x^{1-\delta},\\
N^2 D_1D_2^4D_3^3 & =  D^5 x^{-1+5\delta} \le x^{5\cdot\frac{3}{5}-1-245\delta} < x^{2-\delta},\\
N D_1D_2^5D_3^2 & = D^8 x^{-3+10\delta} \le x^{8\cdot\frac{3}{5}-3-390\delta} < x^{2-\delta}.
\end{align*}
Thus $d_1d_2d_3 = d= p_1\cdots p_r$ gives the desired factorisation.

In the latter case, there exists a terminal index $j<r$ for which $d_ip_j> D_i$ for all $i=1,2,3$. Note if $j$ is odd, then $d_i p_j \le  D_i$ for some $i$, since
\begin{align*}
p_j^3 \le \frac{D}{p_1\cdots p_{j-1}} = \frac{D_1D_2D_3}{d_1d_2d_3}.
\end{align*}
So the terminal $j$ is even with $j\ge6$. By assumption $p_j\le p_6\le x^{\frac{1}{12}-5\eta}$ is smaller than the width of the interval $[x^{\frac{1}{6}+2\eta},x^{\frac{1}{4}-3\eta}]$. And since $d_2\le D_2=x^{\frac{1}{6}+2\eta}<d_2p_j$,
we deduce $e_2 := d_2p_j$ lies in the interval $e_2 \in [x^{\frac{1}{6}+2\eta},x^{\frac{1}{4}-3\eta}]$.

Thus letting $E_3 := D_2D_3/e_2$, for each $l>j$ in turn we shall greedily append the prime $p_l$ onto either $d_1$ or $d_3$ while preserving $d_1<D_1$ and $d_3<E_3$. Indeed, for all $l>j$,
\begin{align*}
p_l^2 \le \frac{D}{p_1\cdots p_{l-1}} = \frac{D_1D_2D_3}{d_1d_2d_3p_j\cdots p_{l-1}} \le \frac{D_1E_3}{d_1d_3p_{j+1}\cdots p_{l-1}},
\end{align*}
so there is a factorization $e_1e_3 = d_1d_3p_{j+1}\cdots p_l$ with $e_1\le D_1=N$ and $e_3\le E_3= D/(Ae_2x^{2\delta})$. Hence the result now follows by Lemma \ref{lem:criterion} for the factorization $e_1e_2e_3=d_1d_2d_3p_{j}\cdots p_l = p_1\cdots p_l$.
\end{proof}

\begin{proof}[Proof of Proposition \ref{prop:factorD}]
We shall consider 3 cases, depending on the sizes of $p_1$ and $p_2p_3$ compared to the endpoints of the key interval $[x^{\frac{1}{6}+2\eta},x^{\frac{1}{4}-3\eta}]$.

\vspace{1em}
\noindent
{\bf CASE 1: $p_1 \ge D^2/x=x^{\frac{1}{6}+2\eta}$.}
\vspace{.5em}

Let $d_2 := p_1$, $C:= D/Nd_2$. Note $C = D/Nd_2 \ge D^{\frac{2}{3}}/N \ge 1$.

Next $D\ge p_1^3\ge p_1p_2^2$ implies $p_2^2 \le D/p_1=NC$, so either $p_2 \le N$ or $p_2\le C$. Similarly, since $p_1\cdots p_{j-1}p_j^2\le D$ for all $j\le r$, we get $p_j^2 \le \frac{AC}{p_2\cdots p_{j-1}}$ for $3\le j\le r$. As such, we may factor $p_2\cdots p_r=d_1d_3$ for $d_1\le N, d_3\le C$. Hence by Lemma \ref{lem:criterion} $p_1\cdots p_r=d_1d_2d_3$ satisfies \eqref{eq:JMfactorable}.

\vspace{.5em}

In the remaining cases, we assume $p_1<x^{\frac{1}{6}+2\eta}$. By Lemma \ref{lem:reduce}, it remains to consider $p_2p_3>x^{\frac{1}{4}-3\eta}$ or $p_2p_3< x^{\frac{1}{6}+2\eta}$. Note
\begin{align}\label{eq:p13}
p_2p_3 \le p_1^{\frac{1}{3}}(p_1p_2p_3^3)^{\frac{1}{3}} \le (x^{\frac{1}{6}+2\eta})^{\frac{1}{3}}D^{\frac{1}{3}} = x^{\frac{1}{3}(\frac{1}{6}+\frac{7}{12}+3\eta-50\eta)} < x^{\frac{1}{4}+\eta-16\delta}.
\end{align}

\vspace{1em}
\noindent
{\bf CASE 2: $p_2p_3>x^{\frac{1}{4}-3\eta}$ and $p_1 <x^{\frac{1}{6}+2\eta}$.}
\vspace{.5em}

The proof follows by Lemma \ref{lem:reduce} if $p_2p_4\in[x^{\frac{1}{6}+2\eta},x^{\frac{1}{4}-3\eta}]$. Thus by definition of $\mathcal P_4$, in this case we may assume
\begin{align}
p_2p_4<x^{\frac{1}{6}+2\eta}.
\end{align}
Hence we have $p_4<x^{12\eta+50\delta}$, since
\begin{align}\label{eq:p2X1}
p_2 > p_2(p_1p_2p_3^3)/D > (p_2p_3)^3/D > x^{3(\frac{1}{4}-3\eta)}/D = x^{\frac{1}{6}-10\eta+50\delta}.
\end{align}

If $p_1p_4>x^{\frac{1}{6}+2\eta}$, then the proof follows by Lemma \ref{lem:reduce} where $d_2=p_1p_4$ is $<x^{(\frac{1}{6}+2\eta)+12\eta+50\delta}<x^{\frac{1}{4}-3\eta}$, since $\eta<\frac{1}{204}-3\delta$.

Else $p_1p_4<x^{\frac{1}{6}+2\eta}$. We shall apply Lemma \ref{lem:greedy} with $d_2=p_1p_4$.

If either $Nx^{-\delta}$ or $x^{1-2\delta}/DN$ is greater than $x^{\frac{1}{4}+\eta-16\delta} \ge p_2p_3$, by \eqref{eq:p13}, then Lemma \ref{lem:greedy} completes the proof with $(d_1,d_3)=(p_2p_3,1)$ or $(1,p_2p_3)$, respectively. Otherwise, $Nx^{-\delta}, x^{1-2\delta}/DN\in  [x^{\frac{1}{6}-2\eta-64\delta},x^{\frac{1}{4}+\eta-16\delta}]$, since $x/D=x^{\frac{5}{12}-\eta+50\delta}$. But then, using $\eta<\frac{1}{108}$,
\begin{align}\label{eq:Aminmax}
\max(Nx^{-\delta},x^{1-2\delta}/DN)\ge\; (x^{1-2\delta}/D)^{\frac{1}{2}} = x^{\frac{5/2}{12}-\eta/2+24\delta} > x^{\frac{1}{6}+2\eta} > p_2, \nonumber\\
\min(Nx^{-\delta},x^{1-2\delta}/DN)\ge\; x^{\frac{1}{6}-2\eta-64\delta} >  x^{\frac{1}{8}+\eta/2-8\delta} \ge (p_2p_3)^{1/2} \ge p_3,
\end{align}
by \eqref{eq:p13}, which suffices again for Lemma \ref{lem:greedy}. Note $p_6<x^{12\eta+50\delta}<x^{\frac{1}{12}-5\eta}$ when $r\ge6$, using $\eta<\frac{1}{204}-3\delta$.

\vspace{1em}
\noindent
{\bf CASE 3: $p_2p_3< x^{\frac{1}{6}+2\eta}$ and $p_1 <x^{\frac{1}{6}+2\eta}$.}
\vspace{.5em}

By Lemma \ref{lem:reduce}, it suffices to consider either $p_1p_4 < x^{\frac{1}{6}+2\eta}$ or $p_1p_4 > x^{\frac{1}{4}-3\eta}$.

\vspace{1em}
{\bf Subcase 3.1: $p_1p_4 < x^{\frac{1}{6}+2\eta}$.}
\vspace{.5em}

Suppose we can show $p_6 < x^{\frac{1}{12}-5\eta}$ (when $r\ge6$). Then since $x^{1-3\delta}/D > D^4/x^{2-6\delta}$, either $Nx^{-\delta}$ or $x^{1-2\delta}/DN$ is greater than $D^2/x^{1-3\delta}$. Thus Lemma \ref{lem:greedy} will complete the proof, with $(d_1,d_2,d_3)=(p_1p_4,p_2p_3,1)$ or $(1,p_2p_3,p_1p_4)$.

If $p_1p_3 > x^{\frac{1}{4}-3\eta}$ then in this subcase
\begin{align*}
x^{\frac{1}{12}+\eta} > (p_2p_3)^{\frac{1}{2}} \ge p_3 = p_4\frac{p_1p_3}{p_1p_4}>
p_4x^{\frac{1}{12}-5\eta},
\end{align*}
so $p_4<x^{6\eta}$. Hence $p_6\le p_4 < x^{\frac{1}{12}-5\eta}$ since $\eta < \frac{1}{108}$, which completes the proof.

Else $p_1p_3 < x^{\frac{1}{4}-3\eta}$. By Lemma \ref{lem:reduce}, it suffices $p_1p_3 < x^{\frac{1}{6}+2\eta}$. Then we see $p_3 > x^{\frac{1}{12}-5\eta}$ implies $p_1 < x^{\frac{1}{12}+7\eta}$. If further $p_1p_2 > x^{\frac{1}{4}-3\eta}$, then similarly 
\begin{align*}
x^{\frac{1}{12}+7\eta} > p_1 \ge p_2 = p_3\frac{p_1p_2}{p_1p_3}>
p_3x^{\frac{1}{12}-5\eta},
\end{align*}
so $p_3<x^{12\eta}$. Hence $p_6\le p_3 < x^{\frac{1}{12}-5\eta}$ since $\eta < \frac{1}{204}$, which completes the proof.

Else $p_1p_2 < x^{\frac{1}{4}-3\eta}$. By Lemma \ref{lem:reduce}, we may assume $p_1p_2 < x^{\frac{1}{6}+2\eta}$.

Similarly, suppose $p_2p_3p_4<x^{\frac{1}{4}-3\eta}$. By Lemma \ref{lem:reduce} we may assume $p_2p_3p_4 < x^{\frac{1}{6}+2\eta}$, and so
\begin{align*}
p_6\le (p_2p_3p_4)^{\frac{1}{3}} \le x^{\frac{(2/3)}{12}+(2/3)\eta} \le x^{\frac{1}{12}-5\eta},
\end{align*}
using $\eta<\frac{1}{204}$, which completes the proof.

Thus we may assume $p_2p_3p_4 > x^{\frac{1}{4}-3\eta}$. But unless $p_6<x^{\frac{1}{12}-5\eta}$, this subcase will contradict the definition of $\mathcal P_{6,1}$ in \eqref{eq:defP4P6}, hence completing the proof.

\vspace{1em}
{\bf Subcase 3.2: $p_1p_4 > x^{\frac{1}{4}-3\eta}$.}
\vspace{.5em}

If $d_2=p_2p_3p_4<x^{\frac{1}{6}+2\eta}$, then Lemma \ref{lem:greedy} completes the proof with $(d_1,d_3)=(p_1,1)$ or $(1,p_1)$, since
\begin{align*}
p_6 \le (p_2p_3p_4)^{\frac{1}{3}} \le x^{\frac{1}{3}(\frac{1}{6}+2\eta)} \le x^{\frac{1}{12}-5\eta}
\end{align*}
for $\eta<\frac{1}{204}$. And if $p_2p_3p_4\in [x^{\frac{1}{6}+2\eta},x^{\frac{1}{4}-3\eta}]$ the proof follows by Lemma \ref{lem:reduce}.

Else $p_2p_3p_4>x^{\frac{1}{4}-3\eta}$. Note $p_4<x^{\frac{1}{12}+\eta}$ and $p_1 = p_1p_4/p_4 > x^{\frac{1}{6}-4\eta}$ and $p_2p_3p_4<x^{\frac{1}{4}+3\eta}$. Also note we may factor $p_1p_4=d_1d_3$ for $d_1\le N$, $d_3\le x^{1-2\delta}/DN$ (Indeed this follows if $N$ or $x^{1-2\delta}/DN$ exceeds $x^{\frac{1}{4}+3\eta}\ge p_1p_4$. Else $N,x^{1-2\delta}/DN\in [x^{\frac{1}{6}-4\eta-2\delta},x^{\frac{1}{4}+3\eta}]$, which also works similarly as with \eqref{eq:Aminmax}, since $p_4<x^{\frac{1}{12}+\eta}<x^{\frac{1}{6}-4\eta-2\delta}$ and $p_1<x^{\frac{1}{6}+2\eta}<x^{\frac{1}{2}(\frac{5}{12}-\eta-\delta)}$ by $\eta<\frac{1}{60}$)

If further $p_6>x^{\frac{1}{12}-5\eta}$, then this subcase contradicts the definition of $\mathcal P_{6,2}$ in \eqref{eq:defP4P6}. Hence we have $p_6\le x^{\frac{1}{12}-5\eta}$, and so by the above paragraph Lemma \ref{lem:greedy} completes the proof with $d_2=p_2p_3$.

\vspace{.5em}

Combining all cases completes the proof of Proposition \ref{prop:factorD}.
\end{proof}

\subsection{Refined factorization of $\mathcal D^{\textnormal{well}}$}

Proposition \ref{prop:factorD} (programmably) factorizes each $d\in\mathcal D^* \subset \mathcal D^+(x^{\frac{7}{12}+\eta})$, and forms the key step to prove the weights $\lambda^*$ are programmably factorable. With applications in mind to twin primes, we shall similarly (programmably) factorize certain subsets of the well-factorable support $\mathcal D^{\textnormal{well}}$, as in \eqref{eq:defDwell}.

In the following result, we factorize $d\in\mathcal D^{\textnormal{well}}(D)$ for variable level $D\in(x^{\frac{4}{7}},x^{\frac{3}{5}})$, depending on the anatomy of $d$. As $\mathcal D^\pm \subset \mathcal D^{\textnormal{well}}$, this has implications to both upper and lower bounds for the standard linear sieve.

\begin{proposition}
Let $\mathcal D^{\textnormal{well}}(D)$ as in \eqref{eq:defDwell} for $D=x^{\frac{7}{12}+\eta-50\delta}$ and $-\frac{1}{84}<\eta<\frac{1}{60}-30\delta$. Let $x^{\frac{1}{4}-3\eta} \ge p_1\ge\cdots\ge p_r$ be primes for which $d=p_1\cdots p_r\in \mathcal D^{\textnormal{well}}(D)$. Then $d$ has factorization $d=abc$ satisfying \eqref{eq:JMfactorable}  if $p_3 \ \le \ x^{\frac{1}{12}-5\eta}$, or if
\begin{align*}
d_2\in [x^{\frac{1}{6}+2\eta},x^{\frac{1}{4}-3\eta}] \qquad \textnormal{with } \ d_2\mid p_1p_2p_3, d_2\neq p_3.
\end{align*}
\end{proposition}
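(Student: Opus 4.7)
The plan is to mirror the case analysis of Proposition~\ref{prop:factorD}, adapted to the broader support $\mathcal D^{\textnormal{well}}$. The key preliminary is that Lemma~\ref{lem:criterion} and Lemma~\ref{lem:reduce} both rely only on the inequality $p_1\cdots p_{l-1} p_l^2 \le D$ defining $\mathcal D^{\textnormal{well}}$ (with Lemma~\ref{lem:reduce} additionally requiring $p_1 < x^{1/6+2\eta}$), so both transfer verbatim; and the explicit hypothesis $p_1 \le x^{1/4-3\eta}$ takes the place of Case~1 of Proposition~\ref{prop:factorD}, since whenever $p_1 \ge x^{1/6+2\eta}$ it lies in $[x^{1/6+2\eta}, x^{1/4-3\eta}]$ and Lemma~\ref{lem:criterion} with $b = p_1$ concludes. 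Lemma~\ref{lem:greedy} also transfers with a minor modification: its parity argument invoked $\mathcal D^+$, but the essential feature — that the terminal prime satisfies $p_j \le x^{1/12-5\eta}$ — holds automatically whenever $p_3 \le x^{1/12-5\eta}$ and the terminal index is $\ge 3$.

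First I would handle hypothesis (a), $p_3 \le x^{1/12-5\eta}$. The case $p_1 \ge x^{1/6+2\eta}$ is disposed of as above. Otherwise $p_1 < x^{1/6+2\eta}$ and $p_j \le p_3 \le x^{1/12-5\eta}$ for $j \ge 3$; I would exhibit a starting factorization $p_1 p_2 p_3 p_4 = d_1 d_2 d_3$ satisfying \eqref{eq:greedy} (e.g.\ bundle $p_1$ into $d_2$, then split $p_2, p_3, p_4$ between $d_1, d_3$ based on the size of $A$) and run the greedy extension of Lemma~\ref{lem:greedy}; the terminal branch works because for $j \ge 3$ we already have $p_j \le x^{1/12-5\eta}$, bypassing the odd/even parity step.

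Next, under hypothesis (b), I would case-split on the witness $b$. The cases $b \in \{p_1, p_1 p_2, p_1 p_2 p_3\}$ succeed directly with $b_0 = b$ in Lemma~\ref{lem:criterion}: at each greedy step the indices of $b_0$ and of $q_1, \ldots, q_j$ fill an initial segment $\{1,\ldots,l\}$ with $q_j = p_l$, so $b_0 q_1 \cdots q_{j-1} q_j^2 = p_1\cdots p_{l-1} p_l^2 \le D$ by $\mathcal D^{\textnormal{well}}$. The case $b = p_2$ forces $p_1 \ge p_2 \ge x^{1/6+2\eta}$, so revert to $b_0 = p_1$; and $b = p_2 p_3$ reduces either to $b_0 = p_1$ (when $p_1 \in$ interval) or to Lemma~\ref{lem:reduce} with $b_0 = p_2 p_3$ (when $p_1 < x^{1/6+2\eta}$).

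The hard part will be the $b = p_1 p_3$ branch with $p_1 < x^{1/6+2\eta}$ (else take $b_0 = p_1$). This unfolds into a nested sub-case analysis paralleling Subcase~3.1 of Proposition~\ref{prop:factorD}. Starting from $p_1 p_3 \in [x^{1/6+2\eta}, x^{1/4-3\eta}]$ one chains successive exclusions: $p_1 p_2 > x^{1/4-3\eta}$ forces $p_2 > x^{1/12-5\eta}$; $p_2 p_3 < x^{1/6+2\eta}$ combined with the lower bound $p_1 > x^{1/8-3\eta/2}$ (derived from $p_1 p_2 > x^{1/4-3\eta}$ and $p_2 \le p_1$) forces $p_4 < x^{1/24+7\eta/2}$. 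In each branch one exhibits at least one of $p_1 p_2, p_2 p_3, p_1 p_4, p_2 p_4$ lying in the target interval (enabling Lemma~\ref{lem:criterion} or Lemma~\ref{lem:reduce}); and in the terminal branch the primes $p_4, p_5, \ldots$ are squeezed small enough to trigger the Lemma~\ref{lem:greedy}-style argument within the admissible range $\eta < 1/60 - 30\delta$.
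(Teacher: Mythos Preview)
Your approach is substantially more convoluted than needed, and the hardest branch you flag is not actually hard. The paper's proof rests on a single observation you missed: once $p_1 < x^{1/6+2\eta}$ and $b \le x^{1/4-3\eta}$, one has $p_1^2 < (D^2/x)^2 = D^4/x^2 \le D/b$ (since $b \le x^{1/4-3\eta} < x^2/D^3$). Hence for \emph{each} of the remaining choices $b \in \{p_2p_3, p_1p_3, p_2\}$ the complementary prime $p_i$ (namely $p_1$, $p_2$, or the pair $p_1,p_3$ respectively) satisfies $p_i^2 \le D/b$ and can be slotted into $a$ or $c$; one then extends to $p_4,\ldots,p_r$ by induction using only the quadratic condition $p_1\cdots p_{j-1}p_j^2 \le D$ from $\mathcal D^{\textnormal{well}}$. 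In particular your ``hard part'' $b = p_1p_3$ is one line: $p_2^2 \le D/b$ places $p_2$, done. No nested sub-case analysis is needed.

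Two further issues. First, your claim that Lemma~\ref{lem:reduce} ``relies only on the inequality $p_1\cdots p_{l-1}p_l^2 \le D$'' is not correct as stated: the bullets for $b \in \{p_2p_4, p_1p_4\}$ in that proof invoke $p_3^2 \le D/(p_1p_2p_3)$, i.e.\ the cubic $\mathcal D^+$ condition $p_1p_2p_3^3 \le D$, which $\mathcal D^{\textnormal{well}}$ does not supply. (The bullet for $b=p_2p_3$ you actually use happens to be fine, but the blanket transfer claim is false.) Second, for hypothesis~(a) the paper does \emph{not} take $d_2 = p_1$; it first reduces to $p_1p_3 < x^{1/6+2\eta}$ via the already-handled $b=p_1p_3$ case, sets $d_2 = p_1p_3$, and then needs only to place the single prime $p_2$ (using $p_2^2 \le D^{2/3} < x^{1-2\delta}/D$, which is exactly where the bound $\eta < \tfrac{1}{60}-30\delta$ enters). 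Your proposed split of $p_2,p_3,p_4$ between $d_1,d_3$ is a harder bin-packing problem that you have not verified.
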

\begin{proof}
For $i=1,2,3$, suppose $d_2=p_1\cdots p_i$ lies $[x^{\frac{1}{6}+2\eta},x^{\frac{1}{4}-3\eta}]$, and let $A=Nx^{-\delta}$, $C=x^{\delta}D/Nd_2$. Since $p_1\cdots p_{j-1}p_j^2\le D$ for all $i<j\le r$, we get $p_j^2 \le \frac{AC}{p_{i+1}\cdots p_{j-1}}$ for $i< j\le r$. As such, we may factor $p_{i+1}\cdots p_r=d_1d_3$ for $d_1\le A, d_3\le C$. Hence by Lemma \ref{lem:criterion} $p_1\cdots p_r=d_1d_2d_3$ satisfies \eqref{eq:JMfactorable}.

Else, by assumption $p_1<x^{\frac{1}{4}-3\eta}$ so we may assume further $p_1<x^{\frac{1}{6}+2\eta}$. In particular this gives $p_1^2\le D/d_2$. For the remaining $d_2\mid p_1p_2p_3$:
\begin{itemize}
\item If $d_2=p_2p_3$ then $p_1^2 \le D/d_2=AC$ implies $p_1\le A$ or $p_1\le C$.
\item If $d_2=p_1p_3$ then $p_2^2 \le D/d_2=AC$ implies $p_2\le A$ or $p_2\le C$.
\item If $d_2=p_2$ then $p_3^2 \le D/p_1d_2$ implies a factorization $p_1p_3=d_1d_3$ for $d_1\le A, d_3\le C$.
\end{itemize}
For each $d_2$ above, we factored $p_1p_2p_3=d_1d_2d_3$ for $d_1\le A, d_3\le C$. Since $p_1\cdots p_{j-1}p_j^2\le D$ for all $j\le r$, by induction we may factor $p_1\cdots p_r=d_1d_2d_3$ for $d_1\le A, d_3\le C$. By Lemma \ref{lem:criterion} $p_1\cdots p_r=d_1d_2d_3$ satisfies \eqref{eq:JMfactorable}.

Finally, suppose $p_3 \ \le \ x^{\frac{1}{12}-5\eta}$ is less than the width of the interval $[x^{\frac{1}{6}+2\eta},x^{\frac{1}{4}-3\eta}]$. Since $p_1<x^{\frac{1}{6}+2\eta}$, we have $p_1p_3< x^{\frac{1}{4}-3\eta}$ so by the above argument we may assume $d_2:=p_1p_3< x^{\frac{1}{6}+2\eta}$. Then $p_2^3\le p_1p_2^2 \le D$ implies
\begin{align*}
p_2^2\le x^{\frac{2}{3}(\frac{7}{12}+\eta)} < x^{\frac{5}{12}-\eta+47\delta}= \frac{x^{1-3\delta}}{D},
\end{align*}
since $\eta<\frac{1}{60}-30\delta$. Thus $p_2\le Nx^{-\delta}$ or $p_2\le x^{1-2\delta}/DN$, so there is a factorization $p_1p_2p_3=d_1d_2d_3$ satisfying \eqref{eq:greedy}. Hence the same greedy argument as in Lemma \ref{lem:greedy} completes the proof, with $p_3$ playing the role of $p_6$.
\end{proof}

Taking the maximum valid $\eta$ as above, we may re-express the above factorization of level $x^{\theta}$, $\theta=\frac{7}{12}+\eta$, as follows. Note the maximum $\theta$ for which $t\in [\frac{1}{6}+2\eta, \frac{1}{4}-3\eta]$ is given by
\begin{align}\label{eq:thetat}
\theta(t) = \begin{cases}
\frac{2-t}{3} & \text{if} \ \ t > \frac{1}{5},\\
\frac{1+t}{2} & \text{if} \ \ t \le \, \frac{1}{5}.
\end{cases}
\end{align}
Similarly the maximum $\theta=\frac{7}{12}+\eta$ for which $t\le \frac{1}{12}-5\eta$ is $(3-t)/5$.

\begin{corollary}\label{cor:piecewiseeta}
Let $p_1\ge\cdots\ge p_r$ be primes and write $p_i=x^{t_i}$. If $d=p_1\cdots p_r\in\mathcal D^{\textnormal{well}}(x^{\theta-50\delta})$, then there is a factorization $d=d_1d_2d_3$ satisfying \eqref{eq:JMfactorable} provided
\begin{align*}
\theta \ \le \ \theta(t_1),
\end{align*}
for $\theta(t)$ as in \eqref{eq:thetat}. Moreover if $t_1 \le \,\frac{1}{5}$, then it suffices that
\begin{align}\label{eq:thetat123}
\theta \ \le \ \theta(t_1,t_2,t_3) := \max\Big\{ \, &\frac{3-t_3}{5},\,\theta(t_1),\,\theta(t_2),\,\theta(t_1+t_2+t_3),\\
& \ \ \theta(t_1+t_2),\,\theta(t_1+t_3), \,\theta(t_2+t_3)\Big\}. \nonumber
\end{align}
\end{corollary}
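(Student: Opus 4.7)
The plan is to recognize the corollary as a clean re-packaging of the preceding Proposition in terms of the exponent $\theta$ rather than $\eta$. Set $\eta := \theta - 7/12$, and observe, by direct inspection of \eqref{eq:thetat}, that $\theta(t)$ is the largest $\theta$ for which $t \in [1/6+2\eta,\, 1/4-3\eta]$; the two branches of \eqref{eq:thetat} switch at $t = 1/5$. Similarly, $(3-t_3)/5$ is the largest $\theta$ for which $t_3 \le 1/12-5\eta$, corresponding to the Proposition's alternative sufficient condition $p_3 \le x^{1/12-5\eta}$.

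For the first assertion, $\theta \le \theta(t_1)$ forces $t_1 \in [1/6+2\eta,\, 1/4-3\eta]$. In particular $p_1 \le x^{1/4-3\eta}$, so the Proposition's ambient hypothesis is met, and $b = p_1$ (distinct from $p_3$ on the squarefree support of $\mathcal{D}^{\textnormal{well}}$) supplies an admissible subproduct lying in the target interval.

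For the moreover clause, assume $t_1 \le 1/5$. Each term in the max defining $\theta(t_1,t_2,t_3)$ corresponds to one of the Proposition's sufficient conditions: the six candidates $\theta(s)$ for $s \in \{t_1,\,t_2,\,t_1+t_2,\,t_1+t_3,\,t_2+t_3,\,t_1+t_2+t_3\}$ range over the nontrivial divisors of $p_1p_2p_3$ other than $p_3$, while the seventh candidate $(3-t_3)/5$ encodes $p_3 \le x^{1/12-5\eta}$. The remaining point to verify is that the Proposition's ambient hypothesis $p_1 \le x^{1/4-3\eta}$, i.e.\ $\theta \le (2-t_1)/3$, survives in each case. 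For $s \le 1/5$ the inequality $\theta(s) = (1+s)/2 \le (2-t_1)/3$ rearranges to $2t_1 + 3s \le 1$, which holds since $t_1, s \le 1/5$. For $s > 1/5$ one has $s \ge t_1$ (automatic for the six candidates, using $t_2+t_3 > 1/5 \ge t_1$ in the one non-trivial case), and hence $\theta(s) = (2-s)/3 \le (2-t_1)/3$. Finally $(3-t_3)/5 \le (2-t_1)/3$ reduces to $5t_1 \le 1 + 3t_3$, immediate from $t_1 \le 1/5$. The exclusion of $b = p_3$ in the Proposition accounts for the absence of $\theta(t_3)$ from the max.

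The argument is purely bookkeeping; the main potential obstacle is tracking which subproducts $b$ the Proposition admits and confirming that the background bound $t_1 \le 1/4-3\eta$ survives under every term in the max.
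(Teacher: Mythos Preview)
Your proposal is correct and follows the same approach as the paper, which treats the corollary as a direct re-expression of the preceding Proposition in terms of $\theta=\tfrac{7}{12}+\eta$ rather than $\eta$; the paper's justification is essentially the short paragraph preceding the corollary. Your verification that the Proposition's ambient hypothesis $p_1\le x^{1/4-3\eta}$ (i.e.\ $\theta\le (2-t_1)/3$) survives under each term of the max is more explicit than what the paper writes out, and is handled correctly.
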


\section{Modification of the linear sieve}

In this section we shall bound the modified linear sieve, analogous to the bounds for the linear sieve (sometimes called the Jurkat--Richert theorem). This bound will form the basis for our final result in the next section, in which we modify the construction of Iwaniec's weights.

\begin{proposition}\label{prop:modJurkatRich}
Let $\eps>0$ be sufficiently small. For $\eta\le \frac{1}{204}$, the modified weights $\lambda^*$ as in \eqref{eq:lambstardef} of level $D=x^{\frac{7}{12}+\eta-\eps}$ satisfy
\begin{align*}
S(\mathcal A,z) \ \le \ |\mathcal A|V(z)\Big(F^*(\tfrac{\log D}{\log z}) + o(1)\Big) + \sum_{d\mid P(z)}\lambda^*(d) \,r_\mathcal{A}(d),
\end{align*}
where $F^*=F^*_\eta$ is a function satisfying $F^*(s) = F(s) + O(\eta^5)$ for $F$ as in \eqref{eq:delaydiff}.
\end{proposition}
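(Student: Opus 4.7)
The plan is to construct $\lambda^*$ directly via a modified Jurkat--Richert-type Buchstab iteration whose termination set equals $\mathcal{D}^*$, then evaluate the resulting main term. Starting from $S(\mathcal{A}, z)$ and repeatedly applying Buchstab's identity $S(\mathcal{A}_d, p_r) = |\mathcal{A}_d| - \sum_{q < p_r} S(\mathcal{A}_{dq}, q)$, Iwaniec's standard upper-bound construction terminates at $d \in \mathcal{D}^+(D)$, with every non-terminated tail term $\mu(d) S(\mathcal{A}_d, p_r)$ being nonpositive, so that dropping the tail gives $S(\mathcal{A}, z) \le \sum_d \lambda^+(d)|\mathcal{A}_d|$. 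I modify the termination rule by deferring: whenever Iwaniec's rule would place us at a $d$ with initial prefix $p_1\cdots p_4 \in \mathcal{P}_4$ or $p_1\cdots p_6 \in \mathcal{P}_6$ (and $d \notin \mathcal{D}^+(x^{7/12})$), I take one or more additional Buchstab steps, iterating the deferral until the terminal $d$ avoids both obstructions. Since each Buchstab step is an exact identity and the extra steps preserve the sign pattern of the tail (the new non-terminated contributions are still nonpositive term-wise), dropping the tail still yields $S(\mathcal{A}, z) \le \sum_d \lambda^*(d)|\mathcal{A}_d|$. A case analysis matching the definitions in \eqref{eq:defP4P6}, combined with the inclusion $\mathcal{D}^+(x^{7/12}) \subseteq \mathcal{D}^*$, will show the iteration stops after boundedly many steps with termination set exactly $\mathcal{D}^*$ as in \eqref{eq:Dstardef}, so $\lambda^*(d) = \mu(d)\1_{d \in \mathcal{D}^*}$.

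Next, using $|\mathcal{A}_d| = g(d)|\mathcal{A}| + r_{\mathcal{A}}(d)$, I split
\begin{equation*}
\sum_d \lambda^*(d)|\mathcal{A}_d| \;=\; |\mathcal{A}|\sum_d \lambda^*(d)\, g(d) \;+\; \sum_d \lambda^*(d)\, r_{\mathcal{A}}(d),
\end{equation*}
the second sum being exactly the remainder in the statement. For the first sum, the standard Jurkat--Richert machinery applies, now invoking the two-sided Mertens-type estimate \eqref{eq:Vwz} (as flagged after its statement, since the modification uses both directions), and yields $\sum_d \lambda^*(d) g(d) = V(z)(F^*(s) + o(1))$ for some $F^*$ arising from a Buchstab-type integral over the continuous analogue of $\mathcal{D}^*$. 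Subtracting the analogous identity $\sum_d \lambda^+(d) g(d) = V(z)(F(s) + o(1))$ from the standard linear sieve gives
\begin{equation*}
F^*(s) - F(s) \;=\; -\,V(z)^{-1}\sum_{d \,\in\, \mathcal{D}^+(D) \setminus \mathcal{D}^*}\mu(d)\, g(d) \;+\; o(1).
\end{equation*}

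It thus remains to bound the weighted measure of $\mathcal{D}^+(D) \setminus \mathcal{D}^*$ after conversion to an integral in the variables $t_i = \log p_i/\log x$. This exceptional set consists of tuples lying in the thin slab $\mathcal{D}^+(x^{7/12+\eta}) \setminus \mathcal{D}^+(x^{7/12})$ (of thickness $O(\eta)$ in $t$-space) whose initial prefix additionally enters $\mathcal{P}_4 \cup \mathcal{P}_{6,1} \cup \mathcal{P}_{6,2}$. Each defining constraint in \eqref{eq:defP4P6} --- such as $p_1 < x^{1/6+2\eta}$, $p_2 p_4 > x^{1/4-3\eta}$, or $p_6 > x^{1/12-5\eta}$ --- cuts an additional $O(\eta)$-slab of $t$-space. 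Counting the independent slab constraints across each of $\mathcal{P}_4, \mathcal{P}_{6,1}, \mathcal{P}_{6,2}$ together with the base $O(\eta)$ restriction yields total Lebesgue measure $O(\eta^5)$, and since the Buchstab integrand is bounded this gives $F^*(s) = F(s) + O(\eta^5)$. The main obstacle will be Step 1: carefully tracking the sign structure through the deferred Buchstab expansions and verifying by explicit case analysis that the terminated support coincides exactly with the set $\mathcal{D}^*$ from \eqref{eq:Dstardef}. The final dimension count is mechanical once the effective number of independent slab constraints is pinned down.
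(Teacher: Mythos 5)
Your overall strategy runs parallel to the paper's (modify the combinatorial support, justify the upper bound by a sign argument, then evaluate $F^*-F$ as a perturbation of the Jurkat--Richert main term), but the two steps you leave for later each contain a genuine gap. First, the sign structure. Since $\mathcal D^*\subsetneq \mathcal D^+(D)$, passing from $\lambda^+$ to $\lambda^*$ \emph{removes} integers from the support, which in the Buchstab tree means converting expanded nodes into dropped nodes, i.e.\ terminating \emph{earlier} --- not ``taking one or more additional Buchstab steps.'' Worse, the obstructions are detected at the even indices $4$ and $6$, and a node dropped at an even index contributes $\mu(d)S(\mathcal A_d,p)\ge 0$, which destroys the upper bound. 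So the assertion that ``the new non-terminated contributions are still nonpositive term-wise'' is exactly the point at issue and is not automatic. The paper's resolution (the discussion preceding Lemma \ref{lem:mutolambda}) is that a prefix $p_1\cdots p_j\in\mathcal P_j$ still lies in $\mathcal D^+(D_0)\subset\mathcal D^*$ with $D_0=x^{7/12}$, so that once a prefix enters $\mathcal P_j$ the first exit of $p_1\cdots p_l$ from $\mathcal D^*$ coincides with the first exit from $\mathcal D^+(D_0)$, which occurs at an \emph{odd} $l$; this parity fact is what makes the analogue of $\prod_{p\mid n}(1-h(p))\le\sum_{d\mid n}\lambda^*(d)h(d)$ survive the modification. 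Without identifying this mechanism your case analysis has no reason to close.

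Second, the estimate $F^*=F+O(\eta^5)$. Your identity $F^*(s)-F(s)=-V(z)^{-1}\sum_{d\in\mathcal D^+(D)\setminus\mathcal D^*}\mu(d)g(d)+o(1)$ is formally correct, but it cannot be bounded by ``Lebesgue measure of the exceptional set times a bounded integrand.'' The set $\mathcal D^+(D)\setminus\mathcal D^*$ contains $d$ with arbitrarily many prime factors going all the way down to $2$; summing $g$ with absolute values over the tail beyond $p_4$ or $p_6$ costs unbounded powers of $\log x$, so one must retain the signs and recognize the tail sums as the sifted quantities $V^+(D_0/p_1\cdots p_j,p_j)-V^+(D/p_1\cdots p_j,p_j)=V(p_j)\{F(s_0)-F(s_1)+o(1)\}$, as in \eqref{eq:VplusVstar2}. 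Even granting that reduction, your slab count is off by one: the defining inequalities of $\mathcal P_4$ are half-spaces which only \emph{jointly} with the $\mathrm D^+$ constraints pin each of the four coordinates into an $O(\eta)$-interval, and the condition $d\notin\mathcal D^+(x^{7/12})$ is essentially forced by these, so it is not an independent fifth slab; the measure of the $\mathcal P_4$ component is $O(\eta^4)$, not $O(\eta^5)$. The missing factor of $\eta$ comes from $F(s_0)-F(s_1)=O(\eta)$ because the two sieve levels differ only by $x^\eta$ --- a cancellation invisible to a pure measure bound. As written, your argument would prove at best $F^*=F+O(\eta^4)$, which is weaker than the proposition claims.
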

\begin{remark}
It suffices for our purposes to obtain qualitative error $o(1)$ in the factor accompanying $F^*$. Though as with the Jurkat--Richert theorem, with greater care one should obtain a quantitative refinement, e.g. $O((\log D)^{-1/6})$. See (12.4)--(12.8) in \cite{Opera}.
\end{remark}

We now adapt the proof. Let $D=x^{\frac{7}{12}+\eta}$ and $D_0 = x^{\frac{7}{12}}$. For $n\ge1$, primes $p_1\ge\cdots\ge p_n$, if $p_1\cdots p_n\notin \mathcal D^+(D)$ then there exists a minimal index $l\le n$ such that $p_1\cdots p_l\notin \mathcal D^+(D)$. By definition such minimal $l$ is odd. (Explicitly, this occurs when $p_1\cdots p_{l-1}p_l^3 > D$ but $p_1\cdots p_{m-1}p_m^3 \le D$ for all odd $m<l$.)
Similarly, if $p_1\cdots p_n\notin \mathcal D^*$ there exists a minimal index $l\le n$ such that $p_1\cdots p_l\notin \mathcal D^*$, which is also odd.

Indeed, to show this let $l\le n$ be the minimal index such that $p_1\cdots p_l\notin \mathcal D^*$. If $(p_1,\ldots, p_j)\notin\mathcal P_j$ for all $j\le l$, $j\in\{4,6\}$, then clearly $l>j$ must be odd, as with $\mathcal D^+(D)$. On the other hand, if $(p_1,\ldots, p_j)\in\mathcal P_j$ for some $j\le l$, $j\in\{4,6\}$, a priori one might expect $l$ could be even.
However, the key point in this case is that $p_1\cdots p_j\in \mathcal D^+(D_0)\subset \mathcal D^*$ (since $p_1\cdots p_j\approx D^{\frac{6}{7}}$ by definition of $\mathcal P_j$). Thus $l>j$ is the minimal index such that $p_1\cdots p_l\notin \mathcal D^+(D_0)$, and hence must be odd as claimed.

Using this minimal index, we show the following lemma.
\begin{lemma}\label{lem:mutolambda}
Let $h$ be a multiplicative function with $0\le h(p)\le 1$ for all primes $p$. Then we have
\begin{align*}
\prod_{p\mid n}(1-h(p)) \ \le \ \sum_{d\mid n}\lambda^*(d) h(d).
\end{align*}
\end{lemma}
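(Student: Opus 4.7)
The approach is the standard M\"obius-partition argument used to establish Jurkat--Richert-type bounds, adapted to $\mathcal{D}^*$ via the parity observation made immediately before the lemma. Since both sides of the claimed inequality depend only on the squarefree kernel of $n$, I may assume $n$ is squarefree. By M\"obius inversion, $\prod_{p\mid n}(1-h(p)) = \sum_{d\mid n}\mu(d)h(d)$, and since $\lambda^*(d)=\1_{d\in \mathcal{D}^*}\mu(d)$, the inequality is equivalent to
\[\sum_{d\mid n,\ d\notin \mathcal{D}^*}\mu(d)\,h(d) \,\le\, 0.\]

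I would prove this by partitioning the bad divisors according to a \emph{minimal bad prefix}. For each $d\mid n$ with $d\notin \mathcal{D}^*$, list its prime factors in decreasing order as $q_1>q_2>\cdots>q_r$, and let $l=l(d)$ be the smallest index for which $P(d):=q_1\cdots q_l \notin \mathcal{D}^*$. The paragraph preceding the lemma establishes that $l$ is necessarily odd, so $\mu(P(d))=-1$. For a fixed admissible prefix $P$ of odd length $l$ with smallest prime $q_l$, the divisors $d\mid n$ satisfying $P(d)=P$ are exactly $d=P\cdot t$, where $t$ ranges over squarefree divisors of $n$ all of whose prime factors are strictly less than $q_l$ (this preserves the decreasing ordering of $d$'s prime factors, and hence the minimality of $l$). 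Summing over such tails by M\"obius yields
\[\sum_{d\mid n,\ P(d)=P}\mu(d)\,h(d) \,=\, \mu(P)\,h(P)\prod_{p\mid n,\ p<q_l}\bigl(1-h(p)\bigr) \,\le\, 0,\]
since $\mu(P)=-1$, $h(P)\ge 0$, and each factor $1-h(p)\ge 0$.

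Summing over all admissible minimal bad prefixes $P$, which together partition $\{d\mid n: d\notin \mathcal{D}^*\}$, then gives the desired bound $\sum_{d\mid n,\ d\notin \mathcal{D}^*}\mu(d)h(d)\le 0$. The only non-routine ingredient is the parity statement that $l(d)$ is odd for $\mathcal{D}^*$; this generalizes the analogous well-known fact for $\mathcal{D}^+(D)$ and is precisely what the authors verify in the paragraph immediately before the lemma. With that parity claim in hand the argument is the classical Iwaniec proof transported to the modified support set, and I do not anticipate any further obstacles.
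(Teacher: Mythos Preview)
Your proof is correct and follows essentially the same approach as the paper: both reduce to showing $\sum_{d\mid n,\,d\notin\mathcal{D}^*}\mu(d)h(d)\le 0$, partition the bad divisors by their minimal bad prefix $p_1\cdots p_l$, invoke the oddness of $l$ established just before the lemma to get $\mu(p_1\cdots p_l)=-1$, and factor the tail sum as a nonnegative product $\prod_{p\mid n,\,p<p_l}(1-h(p))$. The only cosmetic difference is that you explicitly reduce to squarefree $n$ up front, whereas the paper leaves this implicit.
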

\begin{proof}
Note if $h=1$ identically, we interpret the product as $\1_{n=1}$. Now by definition,
\begin{align*}
\sum_{d\mid n}\lambda^*(d) h(d)  -  \prod_{p\mid n}(1-h(p)) \ = \ 
\sum_{\substack{d\mid n\\d\in\mathcal D^*}}\mu(d) h(d) - \sum_{d\mid n}\mu(d) h(d)
\ = \ -\sum_{\substack{d\mid n\\d\notin\mathcal D^*}}\mu(d) h(d).
\end{align*}
Then splitting up $d\notin \mathcal D^*$ by its minimal index,
\begin{align*}
-\sum_{\substack{d\mid n\\d\notin\mathcal D^*}}\mu(d) h(d) & = \sum_{\text{odd }l}\sum_{\substack{p_l<\cdots< p_1<z\\p_1\cdots p_{l-1}\in \mathcal D^*\\ p_1\cdots p_l\notin \mathcal D^*}}h(p_1\cdots p_l)\sum_{\substack{p_1\cdots p_l b\mid n\\ b \mid P(p_l)}}\mu(b) h(b) \ \ge \ 0,
\end{align*}
since $h\ge0$ and the inner sum over $b$ factors as $\prod_{p\mid (P(p_l), n)}(1-h(p)) \ge 0$, since $h(p)\le 1$.
\end{proof}

By Lemma \ref{lem:mutolambda} with $h(d)=1$, we have
\begin{align}\label{eq:Mobius}
\1_{n=1} \ \le \ 
\sum_{d\mid n}\lambda^*(d),
\end{align}
in which case we obtain
\begin{align}\label{eq:lambinvers}
S(\mathcal{A},\mathcal P,z) &= \sum_{n\in A}\1_{(n,P(z))=1} \  \le \ \sum_{n\in A}\sum_{d\mid (n,P(z))}\lambda^*(d) = \sum_{d\mid P(z)}\lambda^*(d)|\mathcal{A}_d| \nonumber\\
& \qquad = X\sum_{d\mid P(z)}\lambda^*(d) \,g(d) + \sum_{d\mid P(z)}\lambda^*(d) \,r_\mathcal{A}(d) \ =: \ XV^*(D,z) + R^*_\mathcal{A}(D,z).
\end{align}

Following Lemma \ref{lem:mutolambda} with $h=g$, we have the identity
\begin{align}\label{eq:Vstardef}
V^*(D,z) & := \sum_{\substack{d\mid P(z)\\d\in\mathcal D^*}}\mu(d)g(d)  = V(z) + \sum_{\text{odd }n}\sum_{\substack{p_n<\cdots< p_1<z\\p_1\cdots p_{n-1}\in \mathcal D^*\\ p_1\cdots p_n\notin \mathcal D^*}}g(p_1\cdots p_n)V(p_n),
\end{align}
and similarly
\begin{align}\label{eq:V+Vn}
V^+(D,z) & = V(z) + \sum_{\text{odd }n}\sum_{\substack{p_n<\cdots< p_1<z\\p_1\cdots p_{n-1}\in \mathcal D^+(D)\\ p_1\cdots p_n\notin \mathcal D^+(D)}}g(p_1\cdots p_n)V(p_n) \ =: \ V(z) + \sum_{\text{odd }n}V_n(D,z).
\end{align}
Then the difference of $V^*$ and $V^+$ is
\begin{align}\label{eq:VplusVstar1}
V^*(D,z) \ - \ & V^+(D,z)  \ = \ \sum_{\text{odd }n}\sum_{p_n<\cdots< p_1<z}g(p_1\cdots p_n)V(p_n)\,{\bf \Delta},
\end{align}
where ${\bf \Delta}$ is the difference of indicator functions,
\begin{align*}
{\bf \Delta} : = \ \1_{\substack{p_1\cdots p_n\notin \mathcal D^*\\p_1\cdots p_{n-1}\in \mathcal D^*}} \ - \ \1_{\substack{p_1\cdots p_n\notin \mathcal D^+(D)\\p_1\cdots p_{n-1}\in \mathcal D^+(D)}} & \ = \ \1_{\substack{p_1\cdots p_n\in \mathcal D^+(D)\setminus \mathcal D^*\\p_1\cdots p_{n-1}\in \mathcal D^*}} \ - \ \1_{\substack{p_1\cdots p_n\notin \mathcal D^+(D)\\p_1\cdots p_{n-1}\in \mathcal D^+(D)\setminus \mathcal D^*}},
\end{align*}
recalling $\mathcal D^*\subset \mathcal D^+(D)$. Note if a point is $(p_1,..,p_6)\in\mathcal P_6$ then its projection is $(p_1,..,p_4)\notin\mathcal P_4$. So by definitions of $\mathcal D^*$, $\mathcal D^+(D)$ from \eqref{eq:Dstardef}, \eqref{eq:Dplusdef}, for odd $n$ we have the identities,
\begin{align}
\1_{\substack{p_1\cdots p_n\in \mathcal D^+(D)\setminus \mathcal D^*\\p_1\cdots p_{n-1}\in \mathcal D^*}} & = 
\sum_{\substack{j\in\{4,6\}\\j<n}}\1_{(p_1,..,p_j)\in \mathcal P_j}\cdot \1_{\substack{p_1\cdots p_n\in \mathcal D^+(D)\setminus\mathcal D^+(D_0)\\p_1\cdots p_{n-1}\in \mathcal D^+(D_0)}}, \label{eq:Dnotstarstar}\\
\1_{\substack{p_1\cdots p_n\notin \mathcal D^+(D)\\p_1\cdots p_{n-1}\in \mathcal D^+(D)\setminus \mathcal D^*}} &=
\sum_{\substack{j\in\{4,6\}\\j<n}}\1_{(p_1,..,p_j)\in \mathcal P_j}\cdot \1_{\substack{p_1\cdots p_n\notin \mathcal D^+(D)\\p_1\cdots p_{n-1}\in \mathcal D^+(D)\setminus \mathcal D^+(D_0)}}. \label{eq:DDnotstar}
\end{align}
We may plug \eqref{eq:Dnotstarstar} and \eqref{eq:DDnotstar} into ${\bf \Delta}$. In addition, we strategically add and subtract the indicator function of $\{p_1\cdots p_n\notin \mathcal D^+(D)$,\, $p_1\cdots p_{n-1}\in \mathcal D^+(D_0)\}$, which together give
\begin{align*}
{\bf \Delta}  & \ = \ \sum_{\substack{j\in\{4,6\}\\j<n}}\1_{(p_1,..,p_j)\in \mathcal P_j} \cdot\bigg(\1_{\substack{p_1\cdots p_n\in \mathcal D^+(D)\setminus\mathcal D^+(D_0)\\p_1\cdots p_{n-1}\in \mathcal D^+(D_0)}}  - \1_{\substack{p_1\cdots p_n\notin \mathcal D^+(D)\\p_1\cdots p_{n-1}\in \mathcal D^+(D)\setminus \mathcal D^+(D_0)}}\bigg)\\
& = \sum_{\substack{j\in\{4,6\}\\j<n}}\1_{(p_1,..,p_j)\in \mathcal P_j}
\cdot \bigg(\1_{\substack{p_1\cdots p_n\in \mathcal D^+(D)\setminus\mathcal D^+(D_0)\\p_1\cdots p_{n-1}\in \mathcal D^+(D_0)}} + \1_{\substack{p_1\cdots p_n\notin \mathcal D^+(D)\\p_1\cdots p_{n-1}\in \mathcal D^+(D_0)}}\\
& \qquad\qquad\qquad\qquad\qquad\qquad - \1_{\substack{p_1\cdots p_n\notin \mathcal D^+(D)\\p_1\cdots p_{n-1}\in \mathcal D^+(D)\setminus \mathcal D^+(D_0)}} - \1_{\substack{p_1\cdots p_n\notin \mathcal D^+(D)\\p_1\cdots p_{n-1}\in\mathcal D^+(D_0)}}\bigg)\\
& \ = \ \sum_{\substack{j\in\{4,6\}\\j<n}}\1_{(p_1,..,p_j)\in \mathcal P_j}\cdot\bigg(\1_{\substack{p_1\cdots p_n\notin \mathcal D^+(D_0)\\p_1\cdots p_{n-1}\in \mathcal D^+(D_0)}}  - \1_{\substack{p_1\cdots p_n\notin \mathcal D^+(D)\\p_1\cdots p_{n-1}\in \mathcal D^+(D)}}\bigg).
\end{align*}

Thus plugging ${\bf \Delta}$ back into \eqref{eq:VplusVstar1} gives
\begin{align*}
V^* (D,z)  \ - \ & V^+(D,z)  \ = \
\sum_{j\in \{4,6\}}  \sum_{\substack{p_j<\cdots< p_1<z\\(p_1,..,p_j)\in\mathcal P_j}}
\sum_{\text{odd }n>j}\times\\
&\times\sum_{p_n<\cdots p_{j+1}<p_j}g(p_1\cdots p_n)V(p_n) \bigg( \1_{\substack{p_1\cdots p_n\notin \mathcal D^+(D_0)\\p_1\cdots p_{n-1}\in \mathcal D^+(D_0)}}  - \1_{\substack{p_1\cdots p_n\notin \mathcal D^+(D)\\p_1\cdots p_{n-1}\in \mathcal D^+(D)}}\bigg).
\end{align*}
Recalling the definition of $V_n(D,z)$ in \eqref{eq:V+Vn}, since $g$ is multiplicative we have
\begin{align}\label{eq:VplusVstar2}
V^*(D,z)  &-  V^+(D,z) \nonumber\\
&= \ \sum_{j\in \{4,6\}}  \sum_{\substack{p_j<\cdots< p_1<z\\(p_1,..,p_j)\in\mathcal P_j}} g(p_1\cdots p_j)\sum_{\text{odd }n>j}\Big(V_{n-j}\big(\tfrac{D_0}{p_1\cdots p_j},p_j\big) - V_{n-j}\big(\tfrac{D}{p_1\cdots p_j},p_j\big)\Big) \nonumber\\
& = \sum_{j\in \{4,6\}} \sum_{\substack{p_j<\cdots< p_1<z\\(p_1,..,p_j)\in\mathcal P_j}} g(p_1\cdots p_j) \Big(V^+\big(\tfrac{D_0}{p_1\cdots p_j},p_j\big) - V^+\big(\tfrac{D}{p_1\cdots p_j},p_j\big)\Big).
\end{align}
as $V^+(D,z) - V^+(D',z)=\sum_{\text{odd }n}[V_n(D,z)-V_n(D',z)]$.

Now assuming the two-sided condition \eqref{eq:Vwz} for $g$, the proof of \cite[Theorem 11.12]{Opera} (c.f. (12.4)--(12.8)) gives asymptotic equality,
\begin{align}\label{eq:Vplus}
V^+(D,z) \ &= \ V(z)\Big\{ F(\tfrac{\log D}{\log z}) \ + \ O\big((\log D)^{-1/6}\big)\Big\} \qquad (z\le D), 
\end{align}
so that \eqref{eq:VplusVstar2} becomes
\begin{align}\label{eq:VplusVstar3}
 V^*(D,z)  \ & = \ V(z)\Big\{ F(\tfrac{\log D}{\log z} ) \ + \ O\big((\log D)^{-\frac{1}{6}}\big)\Big\}  \nonumber\\
 & + \sum_{j\in \{4,6\}} \sum_{\substack{p_j<\cdots< p_1<z\\(p_1,..,p_j)\in\mathcal P_j}} g(p_1\cdots p_j) V(p_j)\times\\
&\qquad\quad\times\Big\{ F(\tfrac{\log D_0/p_1\cdots p_j}{\log p_j}) - F(\tfrac{\log D/p_1\cdots p_j}{\log p_j}) \ + \ O\big(\log\big(\tfrac{D}{p_1\cdots p_j}\big)^{-\frac{1}{6}}\big)\Big\} \nonumber
\end{align}
By partial summation and the prime number theorem, for each $j$ we have,
\begin{align*}
\sum_{\substack{p_j<\cdots< p_1<z\\(p_1,..,p_j)\in\mathcal P_j}} g(p_1 &\cdots p_j)  V(p_j)F(\tfrac{\log D_0/p_1\cdots p_j}{\log p_j}) \\
&= (\tfrac{7}{12}+\eta)
\int_{(x_1,..,x_j)\in P_j}\frac{\dd{x_1}\cdots \dd{x_j}}{x_1\cdots x_{j-1} x_j^2}
F\Big(\tfrac{\frac{7}{12}-x_1-\cdots x_j}{x_j}\Big) + O\big((\log D)^{-\frac{1}{6}}\big).
\end{align*}
Here $P_j$ is the polytope in Euclidean space $\R^j$ corresponding to $\mathcal P_j$, as below.

Hence from \eqref{eq:VplusVstar3}, we obtain
\begin{align}\label{eq:VFstar}
V^*(D,z) = V(z)\Big\{ F^*(\tfrac{\log D}{\log z}) \ + \ O\big((\log D)^{-\frac{1}{6}}\big)\Big\}  \qquad (z\le D),
\end{align}
where the function $F^*$ satisfies
\begin{align}\label{eq:FFstar1}
sF^*(s) \ - \ sF(&s) \ = \\
(\tfrac{7}{12}+\eta) &\cdot  \sum_{j\in\{4,6\}}\int_{(x_1,..,x_j)\in P_j}\frac{\dd{x_1}\cdots \dd{x_j}}{x_1\cdots x_{j-1} x_j^2}
\bigg[F\Big(\tfrac{\frac{7}{12}-x_1-\cdots x_j}{x_j}\Big) - 
F\Big(\tfrac{\frac{7}{12}+\eta-x_1-\cdots x_j}{x_j}\Big)\bigg]. \nonumber
\end{align}
Namely, $P_4\subset \R^4$ is given by
\begin{align*}
P_4 = \{(x_1,...,x_4)\in\mathrm D^+(\tfrac{7}{12}+\eta) \; &: \;  x_1 < \tfrac{1}{6}+2\eta \ \text{ and } \ x_2+x_4>\tfrac{1}{4}-3\eta\}, \end{align*}
and $P_6=P_{6,1}\cup P_{6,2} \subset \R^6$ is given by
\begin{align}
P_{6,1} = \{(x_1,...,x_6)\in\mathrm D^+(\tfrac{7}{12}+\eta) \; &: \; x_1+x_2 < \tfrac{1}{6}+2\eta \ \text{ and } \ x_6>\tfrac{1}{12}-5\eta  \\
&\qquad\text{ and } \ x_2+x_3+x_4>\tfrac{1}{4}-3\eta\}, \nonumber\\
P_{6,2} = \{(x_1,...,x_6)\in\mathrm D^+(\tfrac{7}{12}+\eta) \; &: \; x_1,\;x_2+x_3 < \tfrac{1}{6}+2\eta \ \text{ and } \ x_6>\tfrac{1}{12}-5\eta \nonumber\\
&\qquad\text{ and } \ x_1+x_4,\; x_2+x_3+x_4 > \tfrac{1}{4}-3\eta \}, \nonumber
\end{align}
Similarly, $\mathrm D^+$ is the set in Euclidean space corresponding to $\mathcal D^+$, namely,
\begin{align*}
\mathrm D^+(\tau) =  \{ (x_1,\ldots, x_r) \;  : &\; x_1 > \cdots > x_r > 0,\\
& \text{ and }
x_1+\cdots x_{l-1}+3x_l < \tau \ \text{ for each odd }1\le l \le r\}.
\end{align*}
Hence Proposition \ref{prop:modJurkatRich} follows.

\subsection{Sieve function computation}

We now compute $F^*$ in terms of $F$.
\begin{proposition}
Let $\eta = \frac{1}{204}$. Then for $1\le\; s\le\; 3$, we have
\begin{align}\label{eq:triangleFstar}
F^*(s) \ \le \ 1.000081\,F(s).
\end{align}
\end{proposition}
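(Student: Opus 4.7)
The starting point is \eqref{eq:FFstar1}, which writes $sF^*(s) - sF(s)$ as a finite sum of integrals over the polytopes $P_4$, $P_{6,1}$, $P_{6,2}$. The crucial observation is that this right-hand side is \emph{independent} of $s$, while $sF(s) = 2e^\gamma$ holds throughout $1 \le s \le 3$ by \eqref{eq:delaydiff}. Therefore $F^*(s)/F(s) = 1 + \Delta$ for a single constant $\Delta$ across $[1,3]$, and the task reduces to showing
$$\Delta \;:=\; \frac{7/12+\eta}{2e^\gamma}\sum_{j\in\{4,6\}}\int_{P_j}\frac{F(\alpha_j) - F(\alpha_j+\eta/x_j)}{x_1 \cdots x_{j-1}\, x_j^2}\,\dd{x_1} \cdots \dd{x_j} \;\le\; 8.1\times 10^{-5}$$
at $\eta = 1/204$, where $\alpha_j := (7/12 - x_1 - \cdots - x_j)/x_j$.

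First one pins down the arguments of $F$. For $(x_1,\ldots,x_j) \in P_j$, applying the $\mathrm D^+(7/12+\eta)$-constraint at the largest odd index $l = j-1$ gives $x_1 + \cdots + x_{j-2} + 3 x_{j-1} \le 7/12 + \eta$; combined with $x_{j-1}\ge x_j$, this yields $\sum_i x_i \le 7/12 + \eta - x_j$. Hence $\alpha_j + \eta/x_j \ge 1$ and $\alpha_j \ge 1 - \eta/x_j$. Next, the lower bound $x_j \ge 1/12 - 5\eta$ holds on $P_j$: it is built into the definition of $P_6$, and for $P_4$ it follows from $x_2 \le x_1 < 1/6 + 2\eta$ together with $x_2 + x_4 > 1/4 - 3\eta$. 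At $\eta = 1/204$ this forces $\eta/x_j \le 1/12$, so both arguments of $F$ lie in a compact interval bounded below by $11/12$ and above by roughly $10$, on which $F$ is smooth and decreasing. On the dominant sub-region where both arguments sit in $[1,3]$ one substitutes $F(s) = 2e^\gamma/s$ and rewrites the numerator cleanly as $2e^\gamma(\eta/x_j)/[\alpha_j(\alpha_j + \eta/x_j)]$; on the complementary piece one uses monotonicity of $F$ together with the standard quantitative bounds for $F$ just past $s=3$ coming from \eqref{eq:delaydiff} to absorb this much smaller contribution.

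The remaining task is to evaluate the resulting integrals and verify the numerical inequality. The four-dimensional region $P_4$, cut out by $x_1 < 1/6 + 2\eta$, $x_2 + x_4 > 1/4 - 3\eta$, and the ordering of the $x_i$, is concentrated near the corner $(\tfrac{1}{6},\tfrac{1}{6},\tfrac{1}{12},\tfrac{1}{12})$ of $\mathrm D^+$, and the four-dimensional integral is directly computable, giving a contribution of size $O(\eta^5)$. For the six-dimensional pieces $P_{6,1}$ and $P_{6,2}$, the plan is to decompose the domain into finitely many simplices according to which of the boundary hyperplanes $x_1+x_2 = 1/6+2\eta$, $x_2+x_3+x_4 = 1/4-3\eta$, $x_6 = 1/12-5\eta$ (and $x_1+x_4 = 1/4-3\eta$ for $P_{6,2}$) is active, evaluate each elementary simplex integral (either in closed form or by rigorous numerical integration), sum, and finally insert $\eta = 1/204$. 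The hard part lies entirely in this last step: the analytic input is routine, but reaching the sharp constant $1.000081$, rather than only the qualitative $O(\eta^5)$ bound remarked upon in the introduction, demands careful bookkeeping of the dominant terms through the piecewise-linear decomposition of $P_{6,1}$ and $P_{6,2}$ without any lossy estimation.
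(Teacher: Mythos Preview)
Your analytic setup is correct and matches the paper: the right-hand side of \eqref{eq:FFstar1} is indeed independent of $s$, and since $sF(s)=2e^\gamma$ on $[1,3]$ the ratio $F^*(s)/F(s)$ is a single constant there, so the task is to bound one number. The substitution $F(s)=2e^\gamma/s$ in the integrand is exactly what the paper does. Your digression about a ``complementary piece'' where the arguments of $F$ might exceed $3$ is unnecessary: at $\eta=1/204$ one checks directly from the defining inequalities of $P_j$ (combined with the $\mathrm D^+$ constraint at the largest odd index) that both arguments stay in $(0,3]$ throughout $P_4$ and $P_6$, so the substitution is exact on the whole domain and no separate treatment is needed.

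The substantive divergence is in how you handle the six-dimensional integral over $P_6$. You propose decomposing $P_{6,1}$ and $P_{6,2}$ into simplices and integrating each --- valid in principle, but laborious, and you do not actually carry it out. The paper takes a much cleaner route: using $\tfrac{7}{12}+\eta-\sum_i x_i > x_5$ (from the $\mathrm D^+$ constraint at $l=5$) it bounds the awkward factor $[(\tfrac{7}{12}-\sum)(\tfrac{7}{12}+\eta-\sum)]^{-1}$ above by $[x_5(x_5-\eta)]^{-1}$, and after noting $x_4<\tfrac{1}{12}+\eta$ the integrand \emph{decouples}. The six-dimensional integral is then bounded by a product of two three-dimensional ones: an integral over the projection $\overline{P_6}$ in $(x_1,x_2,x_3)$, and a simple integral over $\{\tfrac{1}{12}-5\eta<x_6<x_5<x_4<\tfrac{1}{12}+\eta\}$. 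This factorization is what makes the numerics tractable --- the paper then reports four explicit values $J_4$, $J_{6,0}$, $J_{6,1}$, $J_{6,2}$ and multiplies them out to get $1.000081$. Your simplicial plan would eventually reach the same destination but with far more bookkeeping, and as written your proposal stops short of producing any numbers, so the sharp constant remains unverified.
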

\begin{proof}
From \eqref{eq:FFstar1} we have
\begin{align}\label{eq:FFstar2}
sF^*(s) = sF(s) + \big(\tfrac{7}{12}+\eta\big)\cdot 2e^\gamma \eta(J_4+J_6),
\end{align}
for integrals $J_j$, $j\in\{4,6\}$,
\begin{align*}
J_j \ & := \ \frac{1}{2e^\gamma \eta}\int_{(x_1,..,x_j)\in P_j}\frac{\dd{x_1}\cdots \dd{x_j}}{x_1\cdots x_{j-1} x_j^2}
\bigg[F\Big(\tfrac{\frac{7}{12}-x_1-\cdots x_j}{x_j}\Big) - 
F\Big(\tfrac{\frac{7}{12}+\eta-x_1-\cdots x_j}{x_j}\Big)\bigg]\\
\ &= \ \int_{(x_1,..,x_j)\in P_j}\frac{\dd{x_1}\cdots \dd{x_j}}{x_1\cdots x_j}\Big[(\tfrac{7}{12}-x_1-\cdots x_j)(\tfrac{7}{12}+\eta-x_1-\cdots x_j)\Big]^{-1},
\end{align*}
since $sF(s)=2e^\gamma$ for $s\in[1,3]$. In particular $|P_j|=O(\eta^j)$ implies $J_j = O(\eta^{j})$, and so from \eqref{eq:FFstar2} we obtain $F^*(s) = F(s) + O(\eta^5)$.

For $\eta = \frac{1}{204}$, we use Mathematica\footnote{The Mathematica package and code are available at {\tt arxiv.org/abs/2109.02851}} to compute that 
\begin{align}\label{eq:I4}
J_4 \ \le \ 0.016896.
\end{align}

Next we bound $J_6$. For $(x_1,..,x_6)\in P_6$ we have $x_4<\frac{1}{2}(x_2+x_3)<\frac{1}{12}+\eta$ and $\frac{7}{12}+\eta-x_1-\cdots x_6 > x_5$ so
\begin{align*}
J_6 \ \le \ \int_{\overline{P_6}} &\frac{\dd{x_1}\dd{x_2} \dd{x_3}}{x_1 x_2 x_3} \int_{\frac{1}{12}-5\eta<x_6<x_5<x_4<\frac{1}{12}+\eta}\frac{\dd{x_4}\dd{x_5}\dd{x_6}}{x_4x_5^2x_6 (x_5-\eta)},
\end{align*}
where $\overline{P_6}=\overline{P_{6,1}}\cup \overline{P_{6,2}}$ is the (3-dimensional) projection of $P_6$, given explicitly by
\begin{align}
\overline{P_{6,1}} = \{(x_1,x_2,x_3)\in\mathrm D^+(\tfrac{7}{12}+\eta) \; &: \; x_1+x_2 < \tfrac{1}{6}+2\eta \ \text{ and } \ x_3>\tfrac{1}{12}-5\eta \nonumber\\
&\qquad \text{ and } \ x_2+2x_3>\tfrac{1}{4}-3\eta\}, \nonumber\\
\overline{P_{6,2}} = \{(x_1,x_2,x_3)\in\mathrm D^+(\tfrac{7}{12}+\eta) \; &: \; x_1,\,x_2+x_3 < \tfrac{1}{6}+2\eta \ \text{ and } \ x_3>\tfrac{1}{12}-5\eta \nonumber\\
&\qquad  \  \text{ and } \ x_1+x_3,\;x_2+2x_3>\tfrac{1}{4}-3\eta \}. \nonumber
\end{align}

For $\eta = \frac{1}{204}$, we compute $J_6 \  \le \ (J_{6,1}+J_{6,2})J_{6,0}$ where
\begin{align*}
J_{6,0} &=\int_{\frac{1}{12}-5\eta<x_6<x_5<x_4<\frac{1}{12}+\eta}\frac{\dd{x_4}\dd{x_5}\dd{x_6}}{x_4x_5^2x_6 (x_5-\eta)} \ \le \ 2.33838,\\
J_{6,1} &=\int_{\overline{P_{6,1}}} \frac{\dd{x_1}\dd{x_2} \dd{x_3}}{x_1 x_2 x_3} \ \le \ 0.000806853,\\
J_{6,2} &=\int_{\overline{P_{6,2}}} \frac{\dd{x_1}\dd{x_2} \dd{x_3}}{x_1 x_2 x_3} \ \le \ 0.00397946.
\end{align*}
Hence combining with \eqref{eq:I4}, for $s\in[1,3]$ we conclude
\begin{align*}
\frac{F^*(s)}{F(s)} \ \le \  1 + (\tfrac{7}{12}+\eta)\cdot \eta\big(J_4+(J_{6,1}+J_{6,2})J_{6,0}\big) \ \le \ 1.000081.
&\qedhere
\end{align*}
\end{proof}

\section{Factorable remainder, after Iwaniec}

In Theorem \ref{thm:wellfactor}, Iwaniec constructed a well-factorable variant $\widetilde{\lambda}^\pm$ of the weights $\lambda^\pm$ from the (Jurkat--Richert) linear sieve. In this section, we prove Theorem \ref{thm:Iwaniecprogrammable} for the  programmably factorable variant $\widetilde{\lambda}^*$ by adapting Iwaniec's construction, similarly building on the Jurkat--Richert type Proposition \ref{prop:modJurkatRich} that we obtained in the previous section. We shall also prove a technical variation on this result, with a variable level depending on anatomy of the moduli.

To set up the construction, we first adapt \cite[Proposition 12.18]{Opera}.
Denote $P(z,u) = P(z)/P(u) = \prod_{u<p\le z}p$.

\begin{proposition}
Let $\eta>0$, and $D=x^{(\frac{7}{12}+\eta)/(1+\eps+\tau)}$ for $\eps>0$ sufficiently small. Let ${\bf D}_r^*$ be defined by \eqref{eq:calDrstar}. Let $\lambda^{(r)}$ be the standard (upper and lower, for $r$ odd and even, resp.) weights for the linear sieve of level $D^\eps$. Then for $u=D^{\eps^2}$, $\tau = \eps^9$,
\begin{align}\label{eq:propIwaniecstar}
S(\mathcal A, z) & \ \le \ |\mathcal A| V(z)\big\{F^*(s) + O(\eps^5)\big\} \nonumber\\
& + \sum_{0\le r\le \eps^{-2}}\sum_{(D_1,..,D_r)\in\mathbf D_r^*} \frac{(-1)^r}{\gamma(D_1,\ldots,D_r)} \sum_{\substack{p_1\cdots p_r\mid P(z,u)\\D_j< p_j\le D_j^{1+\tau}}}\sum_{\substack{b\mid P(u)\\b\le D^{\eps}}} \lambda^{(r)}(b)\, r_{\mathcal A}(bp_1\cdots p_r).
\end{align}
\end{proposition}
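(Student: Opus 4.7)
The plan is to apply Proposition~\ref{prop:modJurkatRich} at sieve level $D^{1+\eps+\tau}=x^{\frac{7}{12}+\eta}$, which gives
\begin{align*}
S(\mathcal A,z) \ \le \ |\mathcal A|V(z)\big(F^*(s)+o(1)\big) \ + \ \sum_{d\mid P(z)}\lambda^*(d)\,r_\mathcal{A}(d),
\end{align*}
and then to restructure the remainder sum following Iwaniec's construction \cite[Prop.~12.18]{Opera}, adapted to the support $\mathcal D^*$. Writing each $d = b\,p_1\cdots p_r$ uniquely with $b\mid P(u)$ and $u<p_r<\cdots<p_1<z$, and using $\mu(d)=\mu(b)(-1)^r$, the remainder becomes a Buchstab-type sum over $r$. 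The tail $r>\eps^{-2}$ is absorbed by a standard Rankin bound, using $u^{\eps^{-2}}=D$ and the support constraint $d\le D^{1+\eps+\tau}$.

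Next I would dyadically partition $[u,z)$ into intervals $(u(1+\tau)^{k-1},u(1+\tau)^k]$, and for each ordered profile $\mathbf D=(D_1,\ldots,D_r)$ with $D_j=u(1+\tau)^{k_j-1}$ collect into ${\bf D}_r^*$ those profiles for which $bp_1\cdots p_r\in\mathcal D^*$ holds \emph{uniformly} whenever $p_j\in(D_j,D_j^{1+\tau}]$ and $b\mid P(u)$ with $b\le D^\eps$. The complementary ``boundary'' profiles, where this uniformity fails because $(\log D_j/\log D)_j$ lies within a $\tau$-neighborhood of a defining hyperplane of $\mathcal D^*$ (either one inherited from $\mathcal D^+$, of the form $x_1+\cdots+x_{l-1}+3x_l=\frac{7}{12}+\eta$, or one cutting out $\mathcal P_4,\mathcal P_6$ from \eqref{eq:defP4P6}), occupy log-volume $O(r\tau)$ in the relevant simplex. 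By the two-sided hypothesis \eqref{eq:Vwz} and the choice $\tau=\eps^9$, together with $r\le\eps^{-2}$, their contribution to the main term is $O(\eps^5)\,|\mathcal A|V(z)F^*(s)$.

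On a good profile $\mathbf D\in{\bf D}_r^*$ the indicator of $\mathcal D^*$ is independent of $b$ for $b\le D^\eps$, so the inner $b$-sum equals $\sum_{b\mid P(u)}\mu(b)\,r_\mathcal A(bp_1\cdots p_r)$, the remainder of the sifting problem $S(\mathcal A_{p_1\cdots p_r},u)$. Applying the standard linear sieve (Theorem~\ref{thm:wellfactor}) of level $D^\eps$---with $\lambda^{(r)}=\widetilde\lambda^+$ for $r$ odd and $\lambda^{(r)}=\widetilde\lambda^-$ for $r$ even, so that the signs align with $(-1)^r$ to yield an upper bound on $S(\mathcal A,z)$---the sieving variable is $s'=\eps/\eps^2=\eps^{-1}$, hence $F(s'),f(s')=1+o_\eps(1)$. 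The replacement $\mu(b)\mapsto\lambda^{(r)}(b)$ therefore introduces a per-profile main-term shift of order $|\mathcal A|\,g(p_1\cdots p_r)\,V(u)\,o_\eps(1)$, which after summing over profiles and over $r\le\eps^{-2}$ contributes $O(\eps^5)\,|\mathcal A|V(z)F^*(s)$ to the main term. The combinatorial weight $1/\gamma(D_1,\ldots,D_r)$ accounts for the number of orderings of strictly decreasing $p_j$ whose profile has coincident $D_j$'s, producing \eqref{eq:propIwaniecstar}.

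The main obstacle is the boundary analysis underlying Step~2: one must verify that the piecewise-linear boundary of $\mathcal D^*$---enriched over that of $\mathcal D^+$ by the hyperplanes carving out $\mathcal P_4$ and $\mathcal P_{6,i}$---meets each $\tau$-box transversally, so that the mixed-indicator profiles really do occupy only an $O(r\tau)$ log-volume rather than accumulating at low-codimension edges. This uses crucially that the finitely many defining inequalities of $\mathcal D^*$ (the $\mathcal D^+$-type cubic constraints together with the linear cuts in \eqref{eq:defP4P6}) are all in general position on the relevant simplex. Once this transversality is checked, the reassembly of main terms across profiles is a direct adaptation of the bookkeeping in \cite[\S12]{Opera}, and the resulting bound is exactly \eqref{eq:propIwaniecstar}.
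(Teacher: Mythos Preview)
Your proposal has a genuine gap at the point where you claim that, on a good profile, ``the inner $b$-sum equals $\sum_{b\mid P(u)}\mu(b)\,r_\mathcal A(bp_1\cdots p_r)$.'' Starting from the remainder $\sum_{d\mid P(z)}\lambda^*(d)\,r_{\mathcal A}(d)$ and decomposing $d=bp_1\cdots p_r$, the $b$-sum you obtain runs only over those $b\mid P(u)$ for which $bp_1\cdots p_r\in\mathcal D^*$. This is \emph{not} the full M\"obius sum over all $b\mid P(u)$: the $\mathcal D^+$-type constraints $P_1\cdots P_{l-1}P_l^3\le D$ at indices $l>r$ involve the small primes of $b$, and they fail once $b$ has enough prime factors. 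Hence the inner $b$-sum is not the remainder of any sifting problem $S(\mathcal A_{p_1\cdots p_r},u)$, and there is no sieve inequality that lets you replace $\mu(b)$ by $\lambda^{(r)}(b)$ there while preserving the upper bound on $S(\mathcal A,z)$. The missing terms involve uncontrolled remainders $r_{\mathcal A}(d)$ and cannot be absorbed into the main term.

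The paper's argument avoids this by never starting from the $\lambda^*$-remainder. Instead it works one level higher, at the sifted sums themselves: it first establishes the Buchstab-type inequality $S(\mathcal A,z)\le S^*(\mathcal A,z)$, where $S^*$ is a signed sum over $p_1\cdots p_r\in\mathcal D^*$ with \emph{all} $p_j\ge u$, and applies this to the presieved sequence $\tilde{\mathcal A}=(a_n\1_{(n,P(u))=1})$. The point is that $|\tilde{\mathcal A}_{p_1\cdots p_r}|=S(\mathcal A_{p_1\cdots p_r},u)$ exactly, so the Fundamental Lemma applies directly to each term with the correct sign (upper for even $r$, lower for odd $r$). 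The boxing into $\mathbf D_r^*$ then requires a parity-dependent $\tau$-enlargement of the defining polytopes to ensure the inequality \eqref{eq:Sstar2} goes the right way for both parities; your proposal does not address this either. Only after the Fundamental Lemma is applied do the main terms recombine and get compared to $V^*(D,z)$, recovering the $F^*(s)+O(\eps^5)$ factor.
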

\begin{proof}
First we write
\begin{align}\label{eq:Iwaniecpf}
S(\mathcal A,z) \ \le \ S^*(\mathcal A,z) - \sum_{\textnormal{odd }n\le N}S_n(\mathcal A,z)
\end{align}
for any $N\ge1$, where
\begin{align*}
S^*(\mathcal A,z) := \sum_{0\le r\le \eps^{-2}}(-1)^r \sum_{\substack{u\le p_r<\cdots p_1<z\\p_1\cdots p_r\in \mathcal D^*}} |\mathcal A_{p_1\cdots p_r}|,\quad
S_n(\mathcal A,z) := \sum_{\substack{p_n<\cdots< p_1<z\\p_1\cdots p_{n-1}\in \mathcal D^*\\ p_1\cdots p_n\notin \mathcal D^*}}S(\mathcal A_{p_1\cdots p_n},p_n).
\end{align*}
We apply the inequality \eqref{eq:Iwaniecpf}, not for $\mathcal A=(a_n)$ itself but rather for the subsequence $\tilde{\mathcal A} = (a_n\1_{(n,P(u))=1})$. Here we take $u = D^{\eps^2}$, and then return to $\mathcal A$ by means of the Fundamental Lemma.

Let $z=D^{1/s}$ with $2\le s\le \eps^{-1}$. Since $z>u$, the only change to the above bound \eqref{eq:Iwaniecpf} when passing to $\tilde{\mathcal A}$ is the term $S^*(\tilde{\mathcal A},z)$, provided that $N$ is not too large in terms of $\eps$. Specifically, we require the lower bound for $p_n$ (by induction, the linear sieve conditions imply $p_1\cdots p_m<D^{1-2^{-m}}$)
\begin{align}
p_n \ge (D/p_1\cdots p_{n-1})^{1/3} \ge D^{2^{1-N}/3}
\end{align}
to be larger than $u = D^{\eps^2}$, which certainly holds provided
\begin{align}
N \le \frac{1}{2}\log \frac{1}{\eps}.
\end{align}

Now it remains to evaluate $S^*(\tilde{\mathcal A},z)$,
\begin{align}\label{eq:Sstar}
S^*(\tilde{\mathcal A},z) = \sum_{0\le r\le \eps^{-2}}(-1)^r \sum_{\substack{u\le p_r<\cdots p_1<z\\p_1\cdots p_r\in \mathcal D^*}} |\tilde{\mathcal A}_{p_1\cdots p_r}|.
\end{align}
For each $r$, we break the range in the inner sum into boxes. Namely, we let $D_1,...,D_r$ run over numbers of form
\begin{align}
D^{\eps^2 (1+\tau)^j}, \quad j =0,1,2,\ldots
\end{align}
with $\tau=\eps^9$. We denote by $\mathbf D_r^+=\mathbf D_r^+(D)$ the set of $r$-tuples $(D_1,...,D_r)$ with $D_r\le \cdots \le D_1\le \sqrt{D}$, such that
\begin{align*}
\mathbf D_r^+ & = \begin{cases}
\{(D_1,...,D_r): D_1\cdots D_{m-1}D_m^3 < D \qquad\text{ for all }\text{odd }m\le r\} & \text {if $r$ even},\\
\{(D_1,...,D_r): D_1\cdots D_{m-1}D_m^3 < D^{1/(1+\tau)}\quad \text{for all }\text{odd }m\le r\} &\text {if $r$ odd}.
\end{cases}
\end{align*}
We note, for $\eps>0$ sufficiently small, the cardinalities of the $\mathbf D_r^+$ are bounded by
\begin{align}\label{eq:Drsize}
\sum_{0\le r\le \eps^{-2}}|\mathbf D_r^+| \ \le \ \exp(\eps^{-3}).
\end{align}

Hereafter let $D = x^{(\frac{7}{12}+\eta)/(1+\tau+\eps)}$, and define
\begin{align}\label{eq:calDrstar}
\mathbf D_r^* & = \{(D_1,...,D_r)\in \mathbf D_r^+(D): (D_1,...,D_i)\notin \mathbf P_{i,r} \text{ for }i\le r,i\in\{4,6\}\}.
\end{align}
where $\mathbf P_{4,r}, \mathbf P_{6,r}$ are ($\tau$-enlarged, for even $r$) analogues of the polytopes $\mathcal P_4$, $\mathcal P_6$ in \eqref{eq:defP4P6}, e.g.
\begin{align*}
\mathbf P_{4,r}  & = 
\begin{cases}
\{ (D_1,\ldots, D_4) \; : \;  D_1^{1+\tau} < x^{\frac{1}{6}+2\eta} \ \text{ and } \ (D_2D_4)^{1/(1+\tau)} > x^{\frac{1}{4}-3\eta}\} &\text {if $r$ even},\\
\{ (D_1,\ldots, D_4) \; : \;  D_1 < x^{\frac{1}{6}+2\eta} \ \text{ and } \ D_2D_4>x^{\frac{1}{4}-3\eta}\} &\text{if $r$ odd}.
\end{cases}
\end{align*}

Observe each integer $p_1\cdots p_r$ has a unique vector $(D_1,\ldots,D_r)$ such that $p_i\in (D_i,D_i^{1+\tau}]$ for all $i\le r$, inducing a map $\nu:\mathbb N\to \bigcup_r \mathbf D_r^+$. As a convention $\nu(1)=()$ is the empty vector. By construction, for even $r$ if $p_1\cdots p_4\notin \mathcal P_4$ then $\nu(p_1\cdots p_4)\notin \mathbf P_{4,r}$, and if $(D_1,\ldots, D_4)\notin \mathbf P_{4,r}$ then $\nu^{-1}(D_1,\ldots, D_4)\cap \mathcal P_4=\emptyset$ for odd $r$. Continuing this argument, we deduce
\begin{align}\label{eq:nuDstar}
p_1\cdots p_r\in \mathcal D^* & \ \implies \ \nu(p_1\cdots p_r)\in \mathbf D_r^* &\text{if $r$ even},\\
(D_1,\ldots, D_r)\in \mathbf D_r^* & \ \implies \ \nu^{-1}(D_1,\ldots, D_r) \subset \mathcal D^*  &\text{if $r$ odd}. \nonumber
\end{align}

Without loss, we may restrict $\mathbf D_r^*$ to vectors with nonempty preimage in $\mathcal D^*$. Hence by construction, \eqref{eq:Sstar} becomes\footnote{Indeed, we have reverse engineered the definition of $\mathbf D_r^*$ just so that \eqref{eq:Sstar2} holds.}
\begin{align}\label{eq:Sstar2}
S^*(\tilde{\mathcal A},z) \le \sum_{0\le r\le \eps^{-2}} \sum_{(D_1,..,D_r)\in\mathbf D_r^*} \frac{(-1)^r}{\gamma(D_1,\ldots,D_r)}\sum_{\substack{p_1\cdots p_r\mid P(z)\\D_j< p_j\le D_j^{1+\tau}}} |\tilde{\mathcal A}_{p_1\cdots p_r}|,
\end{align}
where $\gamma(D_1,\ldots,D_r)=k_1!\cdots k_\ell!$ for the corresponding multiplicities $k_i\ge1$ (i.e. we have $r=k_1+\cdots+k_\ell$ and $D_1=\cdots=D_{k_1}<D_{k_1+1}=\cdots=D_{k_2}<\cdots = D_r$.). Note the term $r=0$ corresponds to $|\tilde{\mathcal A}|$ with $p_1=\cdots=p_r=1$.

Now by the Fundamental Lemma \cite[Theorem 6.9]{Opera}, we have upper (and lower) bounds
\begin{align*}
|\tilde{\mathcal A}_{p_1\cdots p_r}| &= S(\mathcal A_{p_1\cdots p_r},u)\\
& \le \; g(p_1\cdots p_r) |\mathcal A|V(u)\{1+O(e^{-1/\eps})\} \ + \ \sum_{b\le D^{\eps}} \lambda^{(r)}(b)\, r_{\mathcal A}(bp_1\cdots p_r),
\end{align*}
(with $\le $ replaced by $\ge$ for the lower bound) where $\lambda^{(r)}$ is the upper (lower) bound $\beta$-sieve of level $D^\eps$ when $r$ is even (odd). For further details on the Fundamental Lemma and $\beta$-sieve, we refer the reader to \cite[\S 6 \& \S 11]{Opera}.

Plugging back into \eqref{eq:Sstar2}, we get
\begin{align}\label{eq:Iwaniecpf2}
S(\mathcal A,z) \ \le \ & S^*(\tilde{\mathcal A},z) \le \sum_{0\le r\le \eps^{-2}} \sum_{(D_1,..,D_r)\in\mathbf D_r^*} \frac{(-1)^r}{\gamma(D_1,\ldots,D_r)}\sum_{\substack{p_1\cdots p_r\mid P(z)\\D_j< p_j\le D_j^{1+\tau}}}\times \\
& \times\bigg\{g(p_1\cdots p_r) |\mathcal A|V(u)\{1+O(e^{-1/\eps})\} \ + \ \sum_{b\le D^{\eps}} \lambda^{(r)}(b)\, r_{\mathcal A}(bp_1\cdots p_r)\bigg\}. \nonumber
\end{align}

We now compare the main term above to that of the modified linear sieve, as in \eqref{eq:Vstardef}--\eqref{eq:VFstar} from the proof of Proposition \ref{prop:modJurkatRich}, namely,
\begin{align*}
V^*(D,z) := \sum_{d\mid P(z)/P(u)} \lambda^*(d) g(d) \ = \ V(z)\big\{F^*(s) + o(1)\big\}.
\end{align*}
The difference between these main terms is accounted for by those $d$ with two close prime factors, within a ratio $D^{\tau}$, and those $d$ near the boundary. The former contribution is
\begin{align*}
\sum_{\substack{d\mid P(z)/P(u)\\ p\le p'<pD^\tau\\pp'\mid d}} g(d) \le \sum_{\substack{u\le p<z\\ p\le p'<pD^\tau}} g(pp')\cdot \prod_{u\le p<z}(1+g(p)),
\end{align*}
and the latter contribution is
\begin{align}\label{eq:boundary}
\sum_r \sum_{\substack{u<p_r<\cdots p_1<z\\D^{1/(1+\tau)}<p_1\cdots p_m^3<D}} g(p_1\cdots p_r),
\end{align}
where $m$ is the first index ($m\le r$) for which this occurs. Both contributions may be shown to be $O(\eps^5)$, see \cite[pp. 254-255]{Opera}. Hence the Proposition follows.
\end{proof}
\begin{remark}
We make a minor technical point. Namely, at an admissible cost $O(\eps^5)$ we may assume Proposition \ref{eq:propIwaniecstar} holds, where $\mathbf D_r^*$ is further restricted to vectors $(D_1,\ldots, D_r)$ satisfying
\begin{align}\label{eq:rmknu}
\nu^{-1}(D_1^{\frac{1}{1+\tau}},\ldots, D_r^{\frac{1}{1+\tau}}) \subset \mathcal D^*,
\end{align}
regardless of parity of $r$. To show this, note by definitions of $\mathbf P_{4,r}, \mathbf P_{6,r}$ the integers $p_1\cdots p_r$ with $p_j\in [D_j^{1/(1+\tau)},D_j]$, $j\le r$, that lie outside $\mathcal D^*$ must satisfy
\begin{align*}
x^{\frac{1}{6}+2\eta}/p_1 \quad \textnormal{or} \quad
x^{\frac{1}{12}-5\eta}/p_6 \ \in \ [x^{-2\tau},x^{2\tau}].
\end{align*}
Then for $B_1 = x^{\frac{1}{6}+2\eta+2\tau}$, $B_6=x^{\frac{1}{12}-5\eta+2\tau}$, we have
\begin{align*}
\sum_{B_i\ge p_i \ge \max(B/x^{4\tau},u)}g(p_i) \ \ll \
\log \frac{\log B_i}{\log \max(B_i/x^{4\tau},u)} \ll
\frac{\log x^{4\tau}}{\log u} \ll \frac{\tau}{\eps^2},
\end{align*}
and so the contribution of such integers to the main term of \eqref{eq:Iwaniecpf2} is
\begin{align*}
\ll \sum_r \sum_{\substack{u<p_r<\cdots p_1<z\\B_i\ge p_i >\max(B_i/x^{4\tau},u),\;i\in\{1,6\}}} g(p_1\cdots p_r)
\ll \frac{\tau}{\eps^2}\prod_{u<p<z}\big(1+g(p)\big)
\ll \frac{\tau}{\eps^2}\frac{\log z}{\log u} \ll \eps^5.
\end{align*}
Hence \eqref{eq:rmknu} follows.
\end{remark}

We now proceed to Theorem \ref{thm:Iwaniecprogrammable}. For each vector $(D_1,..,D_r)\in \mathbf D_r$ we define the weight $\lambda_{(D_1,..,D_r)}$ supported on $d$ in $\nu^{-1}(D_1,..,D_r)$, namely,
\begin{align}\label{eq:lamD1D}
\lambda_{(D_1,..,D_r)}(d) \ := \ \1^{d=p_1\cdots p_r}_{D_j< p_j\le\; D_j^{1+\tau}\;\forall j}.
\end{align}
Next, we may decompose an integer $d$ into its $u$-smooth and rough components, $d = b(p_1\cdots p_r)$. Recall $\mathcal D^{(r)}=\mathcal D^\pm$, $\lambda^{(r)}=\lambda^\pm$ (depending on the parity of $r$), and for $b\mid P(u)$ we have $\lambda^{(r)}(b)=\mu(b)$ if $b\in \mathcal D^{(r)}$, and $\lambda^{(r)}(b)=0$ else. Thus we may define the convolution $\lambda_{(D_1,..,D_r)}^{(r)}:=\lambda_{(D_1,..,D_r)}\ast \lambda^{(r)}$, i.e.
\begin{align}\label{eq:lamD1Dr}
\lambda_{(D_1,..,D_r)}^{(r)}(d) \ = \
\begin{cases}
\mu(b) & \text{if }d=bp_1\cdots p_r \ \text{ for } \ b\in \mathcal D^{(r)}(D^\eps), \ b\mid P(D^{\eps^2}),\\
& \qquad\qquad\qquad\quad \text{ and }\, D_j< p_j\le\; D_j^{1+\tau}\;\forall j\le r.\\
0 & \text{else.}
\end{cases}
\end{align}
Hence the remainder in \eqref{eq:propIwaniecstar} equals
\begin{align*}
\sum_{0\le r\le \eps^{-2}}\sum_{(D_1,..,D_r)\in\mathbf D_r^*} \frac{(-1)^r}{\gamma(D_1,\ldots,D_r)} \sum_{d\mid P(z)} \lambda_{(D_1,..,D_r)}^{(r)}(d)\, r_{\mathcal A}(d) 
\ = \ \sum_{d\mid P(z)}\widetilde{\lambda}^*(d)\, r_{\mathcal A}(d),
\end{align*}
for the weights
\begin{align}\label{def:tildelambdastar}
\widetilde{\lambda}^* = \sum_{0\le r\le \eps^{-2}}\sum_{(D_1,..,D_r)\in\mathbf D_r^*} \frac{(-1)^r}{\gamma(D_1,\ldots,D_r)} \lambda_{(D_1,..,D_r)}^{(r)}.
\end{align}
Recalling the cardinality of $\mathbf D_r^*\subset \mathbf D_r^+$ from \eqref{eq:Drsize}, it suffices to show the weights $\lambda_{(D_1,..,D_r)}^{(r)}$ are programmably factorable for each vector in $\mathbf D_r^*$. To this we have the following.
\begin{lemma}\label{lem:factorDstarr}
For an integer $d$ denote the vector $\nu(d)=(D_1,\ldots, D_r)$ from \eqref{eq:nuDstar}. If $d$ has a factorization as in \eqref{eq:JMfactorable} at level $D$, then the corresponding weights $\lambda_{\nu(d)}$ and $\lambda_{\nu(d)}^{(r)}$, as in \eqref{eq:lamD1D} and \eqref{eq:lamD1Dr}, resp., are programmably factorable sequences of levels $D^{1+\tau}$ and $D^{1+\tau+\eps}=x^{\frac{7}{12}+\eta}$, resp. (relative to $x$, $\eps/50$).
\end{lemma}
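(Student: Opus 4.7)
The plan is to convert the integer-level factorization $d=abc$ from \eqref{eq:JMfactorable} into a sequence-level factorization $\lambda_{\nu(d)} = \gamma_1\ast\gamma_2\ast\gamma_3$. Since $d=p_1\cdots p_r$ is squarefree, each prime $p_j$ belongs to exactly one of the factors $a,b,c$, inducing a partition of indices $\{1,\ldots,r\} = I_1\sqcup I_2\sqcup I_3$. I would then define
\[\gamma_k(q) \;=\; \1\Bigl[q=\prod_{j\in I_k}p_j' \ \text{for some primes} \ p_j'\in(D_j,D_j^{1+\tau}],\ j\in I_k\Bigr]\]
for $k=1,2,3$, and set $Q_k = \prod_{j\in I_k}D_j^{1+\tau}$, padding one coordinate so that $Q_1Q_2Q_3=D^{1+\tau}$. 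Unique prime factorization immediately yields $\lambda_{\nu(d)} = \gamma_1\ast\gamma_2\ast\gamma_3$ with $|\gamma_k|\le 1$ and $\gamma_k$ supported in $[1,Q_k]$. The critical subtlety is that the partition $(I_1,I_2,I_3)$ extracted from one representative $d$ must be valid for every $d'$ in the box $\nu^{-1}(D_1,\ldots,D_r)$; this works because the comparisons driving the proof of Proposition \ref{prop:factorD} are between index-based subproducts (like $p_1p_4$ or $p_2p_3p_4$) and thresholds of the form $x^c$, and are stable under $\tau$-perturbation of individual primes---this stability is precisely what motivated the $\tau$-enlarged polytopes $\mathbf P_{j,r}$ in the definition of $\mathbf D_r^*$.

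To verify the programmable system relative to $(x,\eps/50)$: given $N\in[x^{\eps/25},x^{1/3+\eps/100}]$, I would set $A=Nx^{-\eps/50}$ and invoke Proposition \ref{prop:factorD} with parameter $\delta_P$ chosen so that the level hypothesis $x^{\frac{7}{12}+\eta-50\delta_P}$ matches our $D=x^{(\frac{7}{12}+\eta)/(1+\tau+\eps)}$; this gives $\delta_P\asymp\eps/85$, and in particular $\delta_P<\eps/50$, placing $A\in[x^{\eps/50},x^{1/3-\eps/100}]\subset[x^{\delta_P},x^{1/3-\delta_P/2}]$. The resulting bounds $Q_1\le a^{1+\tau}\le A^{1+\tau}$, $Q_2\le b^{1+\tau}$, $Q_3\le c^{1+\tau}$ then translate each of the four programmable inequalities directly from its counterpart in \eqref{eq:JMfactorable}; the $(1+\tau)$-inflation and the offset between $\delta_P$ and $\eps/50$ are easily absorbed since $\tau=\eps^9\ll\eps/50-\delta_P$.

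For $\lambda_{\nu(d)}^{(r)} = \lambda_{\nu(d)}\ast\lambda^{(r)}$, I would absorb $\lambda^{(r)}$ into $\gamma_3$ by setting $\tilde\gamma_3=\gamma_3\ast\lambda^{(r)}$. Since $\gamma_3$ is supported on products of primes $>u=D^{\eps^2}$ while $\lambda^{(r)}$ is supported on squarefree divisors of $P(u)$, the two supports are coprime, so any factorization $q=q_1q_2$ with $\gamma_3(q_1)\lambda^{(r)}(q_2)\ne 0$ is the unique smooth-rough decomposition of the squarefree $q$. Hence $|\tilde\gamma_3|\le 1$, and $\tilde\gamma_3$ is supported in $[1,Q_3D^{\eps}]$, raising the total level to $D^{1+\tau+\eps}=x^{\frac{7}{12}+\eta}$. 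The $D^{\eps}$-inflation contributes at most $x^{O(\eps)}$ (at most $x^{7\eps/6}$ in the worst case, where $Q_3$ appears cubed) to each constraint, again absorbed by the $\delta_P$-slack.

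The main obstacle I expect is the bookkeeping of the three small parameters $\delta_P,\tau,\eps$---ensuring the compressive gap $\eps/50-\delta_P$ is simultaneously large enough to absorb the $(1+\tau)$ box-inflation and the $D^{\eps}$ smooth-convolution inflation uniformly across all four programmable inequalities. The more conceptual step, namely verifying that the partition extracted from a single representative $d$ is compatible with every $d'$ in its box, is the one that genuinely leverages the careful design of $\mathbf D_r^*$.
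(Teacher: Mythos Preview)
Your proposal is correct and follows essentially the same route as the paper: partition the index set via the factorization $d=abc$, define $\gamma_k$ as the box-indicator on $\prod_{j\in I_k}(D_j,D_j^{1+\tau}]$, and convolve $\lambda^{(r)}$ into one factor using the smooth/rough coprimality. The paper handles the key inequality transfer more directly than you do: setting $Q_j=\prod_{i\in I_j}D_i$, it simply observes that $D_i<p_i$ forces $Q_j\le d_j$ (where $d_1=a$, $d_2=b$, $d_3=c$), so the system \eqref{eq:defprogram} for $(Q_1,Q_2,Q_3)$ is inherited from \eqref{eq:JMfactorable} for $(a,b,c)$ by monotonicity, and the $(1+\tau)$-inflation on the supports is absorbed into the level; your concern that the partition must ``work for every $d'$ in the box'' and your appeal to the design of $\mathbf D_r^*$ are therefore unnecessary at this step.
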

\begin{proof}
By assumption for each $N\in [1,x^{1/3}]$, there is a factorization $d=d_1d_2d_3$ satisfying the system \eqref{eq:JMfactorable}. For $j=1,2,3$, write $d_j = \prod_{i\in I_j}p_i$ for the induced partition of indices $\{1,...,r\}=I_1\cup I_2\cup I_3$.
Thus letting $Q_j = \prod_{i\in I_j} D_i$, the factorization $D_1\cdots D_r = Q_1Q_2Q_3$ satisfies \eqref{eq:defprogram}, since $D_i<p_i$.

Further, writing the corresponding subvectors $(D_i)_{i\in I_j}$ for $j=1,2,3$, the weights $\lambda_{(D_i)_{i\in I_j}}$ are 1-bounded, supported on $[1,Q_j^{1+\tau}]$, and give the desired triple convolution,
\begin{align*}
\lambda_{(D_1,\ldots,D_r)} = \lambda_{(D_i)_{i\in I_1}}\ast\lambda_{(D_i)_{i\in I_2}}\ast\lambda_{(D_i)_{i\in I_3}}.
\end{align*}
Hence $\lambda_{(D_1,\ldots,D_r)}$ is programmably factorable of level $D^{1+\tau}$ as claimed. Similarly $\lambda_{(D_1,\ldots,D_r)}^{(r)} = \lambda_{(D_i)_{i\in I_1}}\ast\lambda_{(D_i)_{i\in I_2}}\ast\lambda_{(D_i)_{i\in I_3}}^{(r)}$ is programmably factorable of level $D^{1+\tau+\eps}$.
\end{proof}

Now for each vector $(D_1,\ldots,D_r)\in\mathbf D^*_r$, by \eqref{eq:rmknu} there exists $d=p_1\cdots p_r\in\mathcal D^*$ for some primes $p_i\in(D_i^{1/(1+\tau)},D_i]$. Then for all $N\in [1,x^{1/3}]$ Proposition \ref{prop:factorD} gives a factorization of $d$ as in \eqref{eq:JMfactorable}, and so by Lemma \ref{lem:factorDstarr} $\lambda_{(D_1,\ldots,D_r)}^{(r)}$ is programmably factorable of level $x^{\frac{7}{12}+\eta}$.

This completes the proof of Theorem \ref{thm:Iwaniecprogrammable}.

\subsection{Variable level of distribution for the linear sieve weights}

We now return to Iwaniec's well-factorable weights $\widetilde{\lambda}^\pm$ for the (upper and lower) linear sieve, given explicitly from \eqref{eq:lamD1Dr} as the weighted sum,
\begin{align}\label{eq:lambdapm}
\widetilde{\lambda}^\pm = \sum_{0\le r\le \eps^{-2}} \sum_{(D_1,..,D_r)\in\mathbf D_r^\pm} \frac{(-1)^r}{\gamma(D_1,\ldots,D_r)} \;\lambda^{(r)}_{(D_1,..,D_r)}.
\end{align}
We introduce the analogous set of well-factorable vectors $\mathbf D_r^{\textnormal{well}}=\mathbf D_r^{\textnormal{well}}(D)$,
\begin{align}
\mathbf D_r^{\textnormal{well}} & =
\{(D_1,...,D_r): D_1\cdots D_{m-1}D_m^2 < D \quad\text{ for all }m\le r\}.
\end{align}
Note $\mathbf D_r^\pm\subset \mathbf D_r^{\textnormal{well}}$, having dropped parity conditions on the indices $m\le r$.

We have the following technical variation on Theorem \ref{thm:Iwaniecprogrammable} for the original linear sieve.

\begin{proposition}\label{prop:lvlalpha1}
Let $(D_1,...,D_r)\in\mathbf D_r^{\textnormal{well}}(D)$ and write $D=x^\theta$, $D_i=x^{t_i}$ for $i\le r$. If $\theta \le \theta(t_1)-\eps$ as in \eqref{eq:thetat}, then
\begin{align}\label{eq:lvlalpha1}
\sum_{\substack{b=p_1\cdots p_r\\ D_i< p_i \le D_i^{1+\tau}}}
\sum_{\substack{d=bc \leq x^{\theta}\\ c\mid P(p_r)\\(d,a)=1}} \widetilde{\lambda}^\pm(d) \,\Big(\pi(x;d,a)-\frac{\pi(x)}{\phi(d)}\Big) \ \ll_{a,A,\eps} \ 
 \frac{x}{(\log x)^A}.
\end{align}
Moreover if $t_1 \le \,\frac{1}{5}$ and $r\ge3$, then \eqref{eq:lvlalpha1} holds provided that $\theta \, \le \, \theta(t_1,t_2,t_3)-\eps$ as in \eqref{eq:thetat123}. 

If $t_1 \le \,\frac{1}{5}$ and $r\le 2$, then provided $\theta \le \frac{3-u}{5}-\eps$,
\begin{align*}
\sum_{\substack{b=p_1\cdots p_r\\ D_i< p_i \le D_i^{1+\tau}}}
\sum_{\substack{d=bc \le x^\theta\\c\mid P(x^u)\\(d,a)=1}} &\widetilde{\lambda}^\pm(d) \,\Big(\pi(x;d,a)-\frac{\pi(x)}{\phi(d)}\Big) \ \ll_{a,A,\eps}
\frac{x}{(\log x)^A}.
\end{align*}
In particular for $r=0$ (i.e. the empty vector), $\theta \le \frac{3-u}{5}-\eps$, this simplifies as
\begin{align*}
\sum_{\substack{d\le x^\theta\\d\mid P(x^u)\\(d,a)=1}} &\widetilde{\lambda}^\pm(d) \,\Big(\pi(x;d,a)-\frac{\pi(x)}{\phi(d)}\Big) \ \ll_{a,A,\eps}
\frac{x}{(\log x)^A}.
\end{align*}
\end{proposition}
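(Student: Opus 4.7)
The plan is to reduce the statement to Maynard's Theorem~\ref{thm:Maynardprogram} by expanding $\widetilde{\lambda}^\pm$ via \eqref{eq:lambdapm} and verifying programmable factorability term-by-term, in parallel with the proof of Theorem~\ref{thm:Iwaniecprogrammable}. First I would interchange the order of summation so that the outer sum is over $d\le x^\theta$, then expand $\widetilde{\lambda}^\pm$ as a sum of at most $\exp(\eps^{-3})$ weights $\lambda_{(D_1',\ldots,D_{r'}')}^{(r')}$. The additional constraint that the largest $r$ prime factors of $d$ lie in the boxes $(D_j,D_j^{1+\tau}]$, together with the support of $\lambda^{(r')}_{(\cdot)}$, forces the contributing vectors to have $D_j'=D_j$ for $j\le r$. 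Hence the sum decomposes into finitely many inner sums of the form
\begin{align*}
\sum_{\substack{d\le x^\theta\\(d,a)=1}}\lambda_{(D_1,\ldots,D_r,D_{r+1}',\ldots,D_{r'}')}^{(r')}(d)\,\Big(\pi(x;d,a) - \frac{\pi(x)}{\phi(d)}\Big),
\end{align*}
indexed by extensions to vectors in $\mathbf D_{r'}^\pm$.

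Next I would verify that each such inner weight is programmably factorable of level $x^\theta$ (relative to $x$, $\eps/50$). By Lemma~\ref{lem:factorDstarr}, it suffices to produce, for every $d$ in the support, a factorization $d=abc$ satisfying \eqref{eq:JMfactorable}, uniformly in $N\in[x^{2\delta},x^{1/3+\delta/2}]$. But this is precisely the content of Corollary~\ref{cor:piecewiseeta}: the largest prime of $d$ satisfies $p_1\approx D_1=x^{t_1}$ with $t_1$ as in the hypothesis, so the assumption $\theta\le \theta(t_1)-\eps$ yields the required factorization. When $t_1\le\tfrac{1}{5}$ and $r\ge 3$, the improved condition $\theta\le\theta(t_1,t_2,t_3)-\eps$ likewise applies, using that $p_2,p_3$ have sizes determined by their boxes. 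For the smoothness case with $r\le 2$ and $d/b\mid P(x^u)$, the third-largest prime of $d$ satisfies $p_3\le x^u$, so the condition $\theta\le(3-u)/5-\eps$ matches the case $\theta\le(3-t_3)/5-\eps$ of the factorization proposition preceding Corollary~\ref{cor:piecewiseeta}.

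Having established programmable factorability of each inner weight at level $x^\theta<x^{3/5-\eps}$, I would invoke Theorem~\ref{thm:Maynardprogram} to bound each of the at most $\exp(\eps^{-3})$ summands by $\ll_{a,A',\eps} x/(\log x)^{A'}$, and sum. Since the number of terms depends only on $\eps$, absorbing the loss by increasing $A'$ to $A+1$ yields the claim. The main obstacle I anticipate is confirming that the piecewise triple factorization of $d$ provided by the lemmas in Section~3, which are naturally parametrised by $A=Nx^{-\delta}$, lifts to a single grouping of the index set $\{1,\ldots,r'\}$ yielding a triple convolution $\lambda_{(D_i)_{i\in I_1}}\ast\lambda_{(D_i)_{i\in I_2}}\ast\lambda_{(D_i)_{i\in I_3}}^{(r')}$ uniformly across the $N$-range. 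This is exactly what Lemma~\ref{lem:factorDstarr} packages, so the remaining work is bookkeeping: tracing through the cases of Lemmas~\ref{lem:criterion},~\ref{lem:reduce},~\ref{lem:greedy} to confirm that the groupings produced there depend measurably on $N$ and respect the grid structure of $\mathbf D_{r'}^\pm$.
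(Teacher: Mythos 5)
Your proposal follows the paper's argument essentially step for step: fix the vector $(D_1,\ldots,D_r)$, expand $\widetilde{\lambda}^\pm$ via \eqref{eq:lambdapm}, observe that only extensions $(D_1',\ldots,D'_s)\in\mathbf D_s^\pm$ with $D_i'=D_i$ ($i\le r$) contribute, apply Corollary~\ref{cor:piecewiseeta} (combined with Lemma~\ref{lem:factorDstarr}) to get programmable factorability of each $\lambda^{(s)}_{(D_1',\ldots,D_s')}$ at level $x^\theta$, and close with Theorem~\ref{thm:Maynardprogram} plus the cardinality bound \eqref{eq:Drsize}. The final concern you raise about the $N$-dependence of the groupings is already dispatched by Lemma~\ref{lem:factorDstarr} (the definition of programmable factorability allows the index partition $\{1,\ldots,s\}=I_1\cup I_2\cup I_3$ to vary with $N$), and the $\tau$-shift in the prime sizes is absorbed by the $\eps$-slack and the continuity of $\theta(\cdot)$, exactly as in the paper.
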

\begin{proof}
Given $(D_1,...,D_r)$, take an integer $b=p_1\cdots p_r$ with $D_i<p_i\le D_i^{1+\tau}$. Then for all multiples $d$ of $b$ with $d/b\mid P(p_r)$, the weight $\lambda^{(s)}_{(D'_1,..,D'_s)}(d)$ vanishes unless the vector $(D'_1,..,D'_s)$ extends $(D_1,...,D_r)$. That is, $D'_i=D_i$ for all $i\le r$. Conversely, given such a vector $(D'_1,..,D'_s)$ we have $\lambda^{(s)}_{(D'_1,..,D'_s)}(d')=0$ unless the first $s$ primes of $d'$ are $p_1\cdots p_s$ with $D_i<p_i\le D_i^{1+\tau}$, $i\le r$. So by definition of $\widetilde{\lambda}^\pm$ as in \eqref{eq:lambdapm}, we have
\begin{align}\label{eq:lambp1}
\sum_{\substack{b=p_1\cdots p_r\\ D_i< p_i \le D_i^{1+\tau}}}\sum_{\substack{d=bc\le x^\theta\\c\mid P(p_r)\\(d,a)=1}} &\widetilde{\lambda}^\pm(d) = \sum_{r\le s\le \eps^{-2}} \sum_{\substack{(D'_1,\ldots,D'_s)\in\mathbf D_s^\pm\\D'_i=D_i, \, i\le r}} \frac{(-1)^s}{\gamma(D_1',\ldots,D'_s)} 
\sum_{\substack{d\le x^\theta\\(d,a)=1}}\lambda^{(s)}_{(D_1',..,D'_s)}(d).
\end{align}
Here we have extended (by zero) the inner sum to all $d\le x^\theta$, $(d,a)=1$.

Next, take such a vector $(D'_1,...,D'_s)\in\mathbf D_s^\pm(x^\theta)$ with $D'_i=D_i$ for $i\le r$. Each integer $d=p_1\cdots p_s$ with $D_i'< p_i\le (D_i')^{1+\tau}$ lies in $d\in \mathcal D^{\textnormal{well}}(x^{\theta+\tau})$. In particular $p_1\le D_1^{1+\tau}\le x^{t_1+\tau}$ so by Corollary \ref{cor:piecewiseeta}, $d$ has a factorization as in \eqref{eq:JMfactorable} at level $x^{\theta(t_1+\tau)}$. Since $\theta(t)$ is continuous (in fact, piecewise linear), for $\tau>0$ sufficiently small $\theta(t_1+\tau)\ge\, \theta(t_1)-\eps \ge\, \theta$. Thus by Lemma \ref{lem:factorDstarr} the weights $\lambda^{(s)}_{(D_1,..,D_s)}$ are programmably factorable sequences of level $x^{\theta}$. Hence for each such vector, by Theorem \ref{thm:Maynardprogram} we have
\begin{align}\label{eq:lambp2}
\sum_{\substack{d\le x^\theta\\(d,a)=1}}\lambda^{(s)}_{(D_1',..,D'_s)}(d)
\ \ll_{a,A,\eps} \ \frac{x}{(\log x)^A}.
\end{align}
Plugging \eqref{eq:lambp2} back into \eqref{eq:lambp1} gives the bound \eqref{eq:lvlalpha1}, as claimed.

Moreover, if $t_1\le \frac{1}{5}$ and $r\ge3$ then proceeding as in the above paragraph, by Corollary \ref{cor:piecewiseeta} $d$ will factor as in \eqref{eq:JMfactorable} to level $x^{\theta(t_1+\tau,t_2+\tau,\tau_3+\tau)}$. Again $\theta(t,u,v)$ is continuous, so for $\tau>0$ sufficiently small $\theta(t_1+\tau,t_2+\tau,\tau_3+\tau)\ge\, \theta(t_1,t_2,t_3)-\eps \ge\, \theta$. Hence \eqref{eq:lvlalpha1} also follows in this case.

Similarly if $t_1\le \frac{1}{5}$ and $r\le2$, proceeding as above with $d=p_1\cdots p_s$, the assumption $d/b\mid P(x^u)$ implies $s\le 2$ or $p_3\le D_3^{1+\tau}\le x^{u+\tau}$. Thus by Corollary \ref{cor:piecewiseeta} $d$ will factor as in \eqref{eq:JMfactorable} to level $x^{(3-u-\tau)/5} \ge x^\theta$. Hence \eqref{eq:lvlalpha1} follows in this case as well.
\end{proof}

\subsection{Equidistribution for products of primes}

We use an extension of Theorem \ref{thm:Maynardprogram} to products of $k$ primes. This is the analogue in the programmably factorable setting of Lemma 7 \cite{WuI} extending Theorem \ref{thm:BFIwell} of Bombieri--Friedlander--Iwaniec.

\begin{proposition}\label{prop:JMprogramprimes}
Let $\eps>0$ and $\lambda$ be a programmably factorable sequence of level $D \le x^{\frac{3}{5}-\eps}$ (relative to $x$, $\eps/50$). Take real numbers $\eps_1,\ldots, \eps_k\ge \eps$ such that
$\sum_{i\le k}\eps_i=1$. Then for any fixed integer $a\in\Z$, $A,B>0$, letting $\Delta = 1 + (\log x)^{-B}$ we have
\begin{align}\label{eq:programprimes}
\sum_{\substack{d\le D\\(d,a)=1}}\lambda(d) \bigg(
\sum_{\substack{p_1\cdots p_k \equiv a\,({\rm mod}\,d)\\ x^{\eps_i}/\Delta<p_i\le x^{\eps_i} \,\forall i\le k}}1
\ - \ \frac{1}{\phi(d)}\sum_{\substack{(p_1\cdots p_k,d)=1\\ x^{\eps_i}/\Delta<p_i\le x^{\eps_i} \,\forall i\le k}}1\bigg) \ \ll_{a,\eps,A,B} \ \frac{x}{(\log x)^A}.
\end{align}
\end{proposition}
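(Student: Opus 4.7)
The plan is to mirror the strategy of Wu's Lemma 7 in \cite{WuI}, which extends the Bombieri--Friedlander--Iwaniec theorem (Theorem \ref{thm:BFIwell}) from $\pi(x;d,a)$ to products $p_1 \cdots p_k$ of primes in prescribed short ranges. Here I would argue by reducing to a bilinear-form version of Maynard's Theorem \ref{thm:Maynardprogram}.

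First, since $\Delta = 1 + (\log x)^{-B}$ is extremely close to $1$, the ranges $(x^{\eps_i}/\Delta,\, x^{\eps_i}]$ are very short. Splitting $\1_{p_i \in (x^{\eps_i}/\Delta,\, x^{\eps_i}]} = \1_{p_i \le x^{\eps_i}} - \1_{p_i \le x^{\eps_i}/\Delta}$ and expanding the product of indicators reduces \eqref{eq:programprimes} to $O_k(1)$ sums of the same shape but with sharp upper cutoffs $p_i \le X_i$ for $X_i \in \{x^{\eps_i},\, x^{\eps_i}/\Delta\}$.

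Second, by symmetry I would assume $\eps_1 = \max_i \eps_i$ and write $n = p_1 \cdots p_k = p_1 m$ with $m = p_2 \cdots p_k$. After discarding the pairs with $(m, d) > 1$ (which contribute negligibly since $m \le x^{1-\eps_1}$ and $d$ would have to share a large prime factor with $m$), the condition $n \equiv a \,({\rm mod}\,d)$ becomes $p_1 \equiv a\overline m \,({\rm mod}\,d)$, so each term of the reduction becomes a bilinear expression
\begin{align*}
\sum_{\substack{d \le D \\ (d,a)=1}} \lambda(d) \sum_{(m,d)=1} \alpha(m) \Bigg(\sum_{\substack{p_1 \le X_1 \\ p_1 \equiv a\overline m \,({\rm mod}\,d)}} 1 \ - \ \frac{1}{\phi(d)} \sum_{\substack{p_1 \le X_1 \\ (p_1, d) = 1}} 1 \Bigg),
\end{align*}
where $\alpha(m) = \1_{m = p_2 \cdots p_k,\, p_i \le X_i}/(k{-}1)!$ is supported on $m \le x^{1-\eps_1}$ and obeys $|\alpha(m)| \le \tau_{k-1}(m)/(k{-}1)!$. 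A bilinear-form extension of Maynard's estimate (Theorem \ref{thm:Maynardprogram}) should bound this expression by $\ll x/(\log x)^A$ at the same level $D \le x^{3/5-\eps}$: the proof of Theorem 1.1 in \cite{JMII} proceeds via the dispersion method, whose underlying Type II inputs accommodate general bounded coefficients $\alpha$ with $\tau_{k-1}$-type majorization, exactly as Theorem 10 of \cite{BFI} extends Theorem \ref{thm:BFIwell} to bilinear forms and is used by Wu for his Lemma 7 in the well-factorable setting.

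The main obstacle is to verify that the proof of \cite[Theorem 1.1]{JMII} indeed accommodates such $\alpha$-twisted bilinear forms at the full level $x^{3/5-\eps}$, with implied constants uniform in the exponents $\eps_1, \ldots, \eps_k$ (subject to $\eps_i \ge \eps$) and in the varying residue classes $a\overline m$. Concretely, one would track through Maynard's dispersion analysis and verify that the outer sum over $m$ against a coefficient $\alpha(m)$ with $|\alpha(m)| \le \tau_{k-1}(m)$ can be inserted without loss, which is a standard feature of such Type II estimates and parallels the passage from Theorem \ref{thm:BFIwell} to BFI's Theorem 10. Once this bilinear extension is secured, summing the $O_k(1)$ contributions from the first reduction yields the claimed bound $\ll_{a,\eps,A,B} x/(\log x)^A$.
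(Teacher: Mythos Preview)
Your approach differs from the paper's. Rather than a bilinear reduction, the paper simply notes that the indicator of $p_1\cdots p_k$ in the prescribed ranges decomposes into Type I/II sums just like the single-prime indicator: one applies the Heath--Brown identity to \emph{each} factor $p_i$ and multiplies out (since $k\le 1/\eps$ this produces $O_\eps(1)$ convolution factors), then feeds the resulting terms directly into Maynard's Type I/II estimates (Propositions~5.1 and~5.2 of \cite{JMII}) exactly as in the proof of Theorem~\ref{thm:Maynardprogram}. No separate bilinear extension lemma is required.

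Your route---freezing $m=p_2\cdots p_k$ as an opaque coefficient $\alpha(m)$---forfeits the freedom to decompose $m$, and this makes the verification you flag genuinely nontrivial rather than routine. The programmably-factorable framework demands, for each Type~II term, a bilinear variable of size $N\in[x^{2\delta},x^{1/3+\delta/2}]$. When $\eps_1$ sits near $\tfrac12$ (e.g.\ $k=2$, $\eps_1=\eps_2=\tfrac12$), neither $p_1$ nor $m$ alone lands in this window; and since $\alpha$ carries a rough weight it cannot contribute to a smooth Type~I variable either. You are then relying entirely on the Heath--Brown pieces of the single prime $p_1\le x^{1/2}$, grouped against the indivisible block $\alpha$ of size $x^{1/2}$, to always furnish an admissible configuration---which does not follow from general dispersion principles without further case analysis. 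The paper's approach sidesteps this by Heath--Brown decomposing all $k$ primes, which supplies enough small flexible factors for the combinatorial grouping to go through uniformly.
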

\begin{proof}
This follows by the same proof method as in Theorem \ref{thm:Maynardprogram} (i.e. Maynard's \cite[Theorem 1.1]{JM2}). Indeed, Maynard just uses the Heath--Brown identity to decompose the indicator function of primes into Type I/II sums. A similar decomposition holds for products of $k$ primes, after which we may apply the same Type I/II estimates in Propositions 5.1 and 5.2 of \cite{JM2}.
\end{proof}

In addition, by replacing Theorem \ref{thm:Maynardprogram} with Proposition \ref{prop:JMprogramprimes} in the proof, we obtain analogues of Proposition \ref{prop:lvlalpha1}  for the linear sieve weights $\lambda=\widetilde{\lambda}^\pm$ in the case of products of $k$ primes.

\begin{corollary}\label{cor:lvlalphak}
Let $(D_1,...,D_r)\in\mathbf D_r^{\textnormal{well}}(D)$ and write $D=x^\theta$, $D_i=x^{t_i}$ for $i\le r$. Let $\eps>0$ and real numbers $\eps_1,\ldots, \eps_k\ge \eps$ such that
$\sum_{i\le k}\eps_i=1$. Fix an integer $a\in\Z$, $A,B>0$, and let $\Delta = 1 + (\log x)^{-B}$.  If $\theta \le \theta(t_1)-\eps$ as in \eqref{eq:thetat},
\begin{align}\label{eq:lvlalphak}
\sum_{\substack{b=p_1'\cdots p_r'\\ D_i< p_i' \le D_i^{1+\tau}}}\sum_{\substack{d=bc\le x^\theta\\c\mid P(p_r')\\(d,a)=1}} \widetilde{\lambda}^\pm(d) \,\bigg(
\sum_{\substack{p_1\cdots p_k \equiv a\,({\rm mod}\,d)\\ x^{\eps_i}/\Delta<p_i\le x^{\eps_i} \,\forall i\le k}}1
\ &- \ \frac{1}{\phi(d)}\sum_{\substack{(p_1\cdots p_k,d)=1\\ x^{\eps_i}/\Delta<p_i\le x^{\eps_i} \,\forall i\le k}}1\bigg) \\
& \ll_{a,\eps,A,B} \ \frac{x}{(\log x)^A}. \nonumber
\end{align}
Moreover if $t_1 \le \,\frac{1}{5}$, $r\ge3$, then \eqref{eq:lvlalphak} holds provided that $\theta \, \le \, \theta(t_1,t_2,t_3)-\eps$ as in \eqref{eq:thetat123}.

In addition, if $r\le 2$, $u\le t_r$, $t_1 \le \,\frac{1}{5}$, and $\theta \le \frac{3-u}{5}-\eps$, then
\begin{align}\label{eq:lvlalphaksmooth}
\sum_{\substack{b=p_1'\cdots p_r'\\ D_i< p_i' \le D_i^{1+\tau}}}\sum_{\substack{d=bc\le x^\theta\\c\mid P(x^u)\\(d,a)=1}} \widetilde{\lambda}^\pm(d) \,\bigg(
\sum_{\substack{p_1\cdots p_k \equiv a\,({\rm mod}\,d)\\ x^{\eps_i}/\Delta<p_i\le  x^{\eps_i} \,\forall i\le k}}1
\ & - \ \frac{1}{\phi(d)}\sum_{\substack{(p_1\cdots p_k,d)=1\\ x^{\eps_i}/\Delta<p_i\le  x^{\eps_i} \,\forall i\le k}}1\bigg) \\
& \ll_{a,\eps,A,B} \ \frac{x}{(\log x)^A}.\nonumber
\end{align}
\end{corollary}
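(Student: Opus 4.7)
The plan is to mirror the proof of Proposition \ref{prop:lvlalpha1} step by step, substituting Proposition \ref{prop:JMprogramprimes} for Theorem \ref{thm:Maynardprogram} at the very last stage. First, I would expand $\widetilde{\lambda}^\pm$ via \eqref{eq:lambdapm} and argue, exactly as in the derivation of \eqref{eq:lambp1}, that the constraint $d/b \mid P(p_r')$ kills every $\lambda^{(s)}_{(D_1',\ldots,D_s')}$ whose vector does not extend $(D_1,\ldots,D_r)$. This collapses the left-hand side of \eqref{eq:lvlalphak} into
\begin{align*}
\sum_{r\le s\le \eps^{-2}} \sum_{\substack{(D_1',\ldots,D_s')\in\mathbf D_s^\pm\\ D_i'=D_i,\ i\le r}} \frac{(-1)^s}{\gamma(D_1',\ldots,D_s')}\sum_{\substack{d\le x^\theta\\(d,a)=1}} \lambda^{(s)}_{(D_1',\ldots,D_s')}(d)\,\Delta_k(d;a,x),
\end{align*}
where $\Delta_k(d;a,x)$ denotes the prime-product discrepancy appearing in parentheses on the left of \eqref{eq:lvlalphak}.

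Second, for each admissible $(D_1',\ldots,D_s')$, I would verify that $\lambda^{(s)}_{(D_1',\ldots,D_s')}$ is a programmably factorable sequence of level $x^\theta$ relative to $x,\eps/50$. This proceeds verbatim as in the proof of Proposition \ref{prop:lvlalpha1}: any integer in the support lies in $\mathcal D^{\textnormal{well}}(x^{\theta+\tau})$ with largest prime factor at most $x^{t_1+\tau}$, so Corollary \ref{cor:piecewiseeta} produces a factorization \eqref{eq:JMfactorable} at level $x^{\theta(t_1+\tau)}\ge x^\theta$ for $\tau$ small enough (using continuity of $\theta(\cdot)$), and Lemma \ref{lem:factorDstarr} upgrades this to programmable factorability. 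In the branch $t_1\le 1/5$, $r\ge3$, one invokes instead the sharper bound $\theta(t_1,t_2,t_3)$ of \eqref{eq:thetat123}. For \eqref{eq:lvlalphaksmooth} with $r\le 2$, the constraint $d/b\mid P(x^u)$ forces either $s\le 2$ or the third prime to satisfy $p_3\le x^{u+\tau}$, so Corollary \ref{cor:piecewiseeta} yields factorization at level $x^{(3-u-\tau)/5}$, which suffices by hypothesis.

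Third, I would apply Proposition \ref{prop:JMprogramprimes} to each $\lambda^{(s)}_{(D_1',\ldots,D_s')}$ with the fixed tuple $(\eps_1,\ldots,\eps_k)$, obtaining the bound $O_{a,\eps,A',B}(x/(\log x)^{A'})$ for any $A'>0$. The number of vectors to sum over is bounded by $\exp(\eps^{-3})$ via \eqref{eq:Drsize}, which depends only on $\eps$; absorbing this into the choice $A' = A$ (or $A+1$) completes the proof.

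The only substantive obstacle is confirming that Proposition \ref{prop:JMprogramprimes} genuinely applies to these convolutional weights with the same uniformity as Theorem \ref{thm:Maynardprogram} did, but since that proposition is stated precisely for arbitrary programmably factorable $\lambda$ of level $D\le x^{3/5-\eps}$, and our $\theta\le \theta(t_1)-\eps\le 3/5-\eps$, there is no real gap: the argument transfers mechanically from the prime case to the $k$-fold prime-product case, inheriting Maynard's Heath--Brown-type decomposition as noted in the proof sketch of Proposition \ref{prop:JMprogramprimes}.
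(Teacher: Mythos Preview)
Your proposal is correct and follows exactly the paper's own approach: the paper simply states that Corollary \ref{cor:lvlalphak} follows by replacing Theorem \ref{thm:Maynardprogram} with Proposition \ref{prop:JMprogramprimes} in the proof of Proposition \ref{prop:lvlalpha1}, which is precisely the substitution you carry out in detail.
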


\section{Upper bound for twin primes}

Now we shall apply the modified sieve to the set of twin primes
\begin{align*}
\mathcal A := \{p+2 : p\le x\}.
\end{align*}
In this case the sieve notation specializes as $\mathcal P = \{p>2\}$ and $g(d)=1/\phi(d)$ for odd $d$, so that $V(z) = \prod_{2<p<z}(1-1/\phi(p))$. Recall $V(z)\sim \mathfrak{S}_2/e^\gamma \log z$ by Mertens theorem, for the Hardy--Littlewood constant $\mathfrak{S}_2=2\prod_{p>2}\frac{1-2/p}{(1-1/p)^2}$ appearing in $\Pi(x)=\frak{S}_2\, x/(\log x)^2$.

We begin in the spirit of Fouvry--Grupp \cite{FG}, and apply a weighted sieve inequality. To each non-switched term, we apply the Buchstab identity in order to lower the sieve threshhold down to $z=x^\epsilon$ for some tiny $\epsilon>0$. By Proposition \ref{prop:lvlalpha1}, such smooth sums will satisfy level of distribution $\frac{3-\epsilon}{5}$. Combined with variations on a theme, which identify programmably-factorable weights in certain cases, the consequent increase in level will be sufficient to obtain the bound in Theorem \ref{thm:twinbound}. For a final bit of savings, we use refinements obtained by Wu's iteration method \cite{WuII}.

Before moving on, we note that sieve methods and the switching principle have also yielded progress on the Goldbach problem. Indeed, the upper bound in Wu \cite[Theorem 3]{WuII} for twin primes is obtained by using the same formulae as in \cite[Theorem 1]{WuII} for the Goldbach problem, except for altering the level from $\frac{1}{2}$ to $\frac{4}{7}$ (this amounts to replacing factors of $4$ with $7/2$ in a few instances). Importantly, the quantitative upper bounds for twin primes are much stronger than those of the Goldbach problem. This is because the latter relies on level $\frac{1}{2}$ from Bombieri--Vinogradov for a growing residue $a=N$, while the former may appeal to level of distribution $\frac{4}{7}$ from Bombieri--Friedlander--Iwaniec for the fixed residue $a=2$ (and now the subsequent improvements of Maynard). As such our methods have nothing new to say for the Goldbach problem.

\subsection{Lemmas}

We begin with a standard lemma for $x^{1/u}$-rough numbers in terms of the Buchstab function $\omega(u) = \big(f(u)+F(u)\big)/(2e^\gamma)$ for linear sieve functions $f$, $F$ as in Theorem \ref{thm:wellfactor}. Alternatively, $\omega$ is directly defined via
\begin{align*}
\omega(u) & = \frac{1}{u} & \text{for} \quad 1\le u\le2,\\
\big(u\,\omega(u)\big)' &= \omega(u-1) & \text{for} \ \quad 2\le u.\quad
\end{align*}
\begin{lemma}\label{lem:Buchstab}
Let $x\ge2$ and $y=x^{1/u}$. Then we have
\begin{align*}
\sum_{\substack{n\le x\\p\mid n\Rightarrow p\ge y}}1 = \omega(u)\,\frac{x}{\log y} + O\Big(\frac{x}{(\log y)^2}\Big).
\end{align*}
\end{lemma}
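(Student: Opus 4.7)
The plan is to prove this by the classical Buchstab iteration against the prime number theorem with classical error, proceeding by induction on $\lfloor u \rfloor$. Write $\Phi(x,y) := |\{n \le x : p \mid n \Rightarrow p \ge y\}|$; the key tool is the Buchstab identity
\[
\Phi(x,y) \ = \ \Phi(x,\sqrt{x}) + \sum_{y \le p < \sqrt{x}} \Phi(x/p,p) \qquad (y \le \sqrt{x}),
\]
obtained by classifying each rough $n > 1$ by its smallest prime factor.

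\textbf{Base case $1 \le u \le 2$.} Here $y \ge \sqrt{x}$, so $\Phi(x,y)$ counts exactly the primes in $[y,x]$ together with $n=1$. The prime number theorem with classical error gives
\[
\Phi(x,y) \ = \ \pi(x) - \pi(y) + O(1) \ = \ \frac{x}{\log x} + O\Big(\frac{x}{(\log x)^2}\Big).
\]
Since $\omega(u) = 1/u$ and $\log x = u\log y$ on $[1,2]$, this equals $\omega(u) x/\log y + O\big(x/(\log y)^2\big)$.

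\textbf{Inductive step $u > 2$.} For $p = x^{1/v}$ with $v \in (2,u]$, we have $\log(x/p)/\log p = v - 1 \in (1, u-1]$, so the inductive hypothesis yields $\Phi(x/p,p) = \omega(v-1)(x/p)/\log p + O((x/p)/\log^2 p)$. Partial summation using PNT then converts the main-term sum, via the substitution $t = x^{1/v}$ (so $dt/t = -(\log x)/v^2\, dv$, and $\log t = (\log x)/v$), into the integral $(x/\log x)\int_2^u \omega(v-1)\, dv$. Integrating the defining relation $(v\omega(v))' = \omega(v-1)$ from $2$ to $u$ gives $\int_2^u \omega(v-1)\, dv = u\omega(u) - 2\omega(2) = u\omega(u) - 1$; adding the base-case contribution $x/\log x$ from $\Phi(x,\sqrt{x})$ yields the total main term $u\omega(u)\cdot x/\log x = \omega(u)\,x/\log y$, exactly as claimed.

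\textbf{Error control and the main obstacle.} The aggregated inductive errors are bounded by $\sum_{y \le p < \sqrt{x}} (x/p)/\log^2 p \ll x(\log u)/\log^2 y$ via Mertens, comfortably within $O(x/\log^2 y)$ for bounded $u$, and the PNT-remainder contributes the same order in the partial summation. The main obstacle is the bookkeeping in this final step: one must verify that the change of variables turns the discrete prime sum into precisely the integral governed by the delay-differential equation defining $\omega$, and that the implied constants in the error terms remain uniformly bounded on compact $u$-ranges through the induction. Both are routine, but the matching between the Buchstab recursion on $\omega$ and the integrated PNT is the step that must be executed on the nose.
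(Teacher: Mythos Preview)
The paper does not give its own proof of this lemma; it simply cites \cite[Lemma~12]{WuI}. Your sketch supplies the classical Buchstab-iteration argument, which is presumably what lies behind Wu's lemma as well, and is correct for $u$ in bounded ranges with $u>1$.

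One small caveat worth making explicit: your base-case line $\pi(x)-\pi(y)+O(1)=x/\log x+O(x/\log^2 x)$ silently absorbs $\pi(y)$ into the error term. This is valid for $u$ bounded away from $1$ (since then $y\log y=o(x)$), but not uniformly as $u\to 1^{+}$. Correspondingly, in the inductive step the primes $p$ near $\sqrt{x}$ have inner parameter $v-1$ approaching $1$, so the per-term error $O\big((x/p)/\log^2 p\big)$ from the base case is not available with a uniform constant there; one handles that range directly by noting $\sum_{x^{1/3}\le p<\sqrt{x}}\pi(p)\ll x/\log^2 x$. You already flag exactly this uniformity bookkeeping in your final paragraph, so this is not a gap so much as a detail to carry out.
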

\begin{proof}
This is \cite[Lemma 12]{WuI}.
\end{proof}

The argument of Wu makes essential use of weighted sieve inequalities, as in Lemmas 4.1 and 4.2 \cite{WuII}. We shall employ the latter inequality in the special case $d=1$, $\sigma=1$.
\begin{lemma}\label{lem:WuII42}
For $3/10\ge \rho\ge \tau_3 > \tau_2 > \tau_1\ge \rho'\ge 1/20$, we have
\begin{align*}
5S(\mathcal A, x^{\rho}) \ \lesssim \ \sum_{1\le n\le 21}\Gamma_n,
\end{align*}
where

\begin{equation*}
\begin{aligned}[c]
\Gamma_1 := & \ 4S(\mathcal A, x^{\rho'}) \ + \ S(\mathcal A, x^{\tau_1}),\\
\Gamma_2 := & -\sum_{x^{\rho'} \le p < x^{\rho}} S(\mathcal A_{p}, x^{\rho'}),\\
\Gamma_3 := & -\sum_{x^{\rho'} \le p < x^{\tau_2}} S(\mathcal A_{p}, x^{\rho'}),\\
\Gamma_4 := & -\sum_{x^{\rho'} \le p < x^{\tau_3}} S(\mathcal A_{p}, x^{\rho'}),\\
\Gamma_5 := & \underset{x^{\rho'} \le p_1 < p_2 < x^{\tau_2}}{\sum\sum} S(\mathcal A_{p_1p_2},  x^{\rho'}),\\
\Gamma_6 := & \underset{\substack{x^{\rho'} \le p_1 < x^{\tau_1} \\ x^{\tau_2}\le p_2 < x^{\tau_3}}}{\sum\sum} S(\mathcal A_{p_1p_2}, x^{\rho'}),\\
\Gamma_7 := &  \underset{\substack{x^{\rho'} \le p_1 < p_2< x^{\tau_1}}}{\sum\sum} S(\mathcal A_{p_1p_2}, p_1),\\
\Gamma_8 := & \underset{\substack{x^{\rho'} \le p_1 < x^{\tau_1} \le p_2 < x^{\tau_2}}}{\sum\sum} S(\mathcal A_{p_1p_2},  p_1),
\end{aligned}
\quad
\begin{aligned}[c]
\Gamma_9 := & \underset{\substack{x^{\tau_1} \le p_1 <p_2 <p_3 < x^{\tau_3}}}{\sum\sum\sum} S(\mathcal A_{p_1p_2p_3}, p_2),\\
\Gamma_{10} := & \underset{\substack{x^{\tau_1} \le p_1 <p_2 <x^{\tau_2}\le p_3 < x^{\rho}}}{\sum\sum\sum} S(\mathcal A_{p_1p_2p_3}, p_2),\\
\Gamma_{11} := & \underset{\substack{x^{\tau_1} \le p_1 <x^{\tau_2}\le p_2 < p_3 < x^{\tau_3}}}{\sum\sum\sum} S(\mathcal A_{p_1p_2p_3},  p_2),\\
\Gamma_{12} := & \underset{\substack{x^{\rho'} \le p_1 <p_2 <x^{\tau_1}\\ x^{\tau_3} \le p_3 < x^{\rho}}}{\sum\sum\sum} S(\mathcal A_{p_1p_2p_3}, p_2),\\
\Gamma_{13} := & \underset{\substack{x^{\rho'} \le p_1 <x^{\tau_1}\le p_2 < x^{\tau_2}\le p_3 < x^{\rho}}}{\sum\sum\sum} S(\mathcal A_{p_1p_2p_3}, p_2),\\
\Gamma_{14} := & \underset{\substack{x^{\rho'} \le p_1 <x^{\tau_1}\\ x^{\tau_2}\le p_2< p_3 < x^{\rho}}}{\sum\sum\sum} S(\mathcal A_{p_1p_2p_3}, p_2),\\
\Gamma_{15} := & \underset{\substack{x^{\tau_1} \le p_1 <x^{\tau_2}\le p_2 < x^{\tau_3}\le p_3 < x^{\rho}}}{\sum\sum\sum} S(\mathcal A_{p_1p_2p_3}, p_2),
\end{aligned}
\end{equation*}

\begin{align*}
\Gamma_{16} := & \underset{\substack{x^{\tau_2} \le p_1 <p_2 < p_3 < p_4< x^{\tau_3}}}{\sum\sum\sum\sum} S(\mathcal A_{p_1p_2p_3p_4}, p_3),\\
\Gamma_{17} := & \underset{\substack{x^{\tau_2} \le p_1 <p_2 < p_3 < x^{\tau_3} \le p_4< x^{\rho}}}{\sum\sum\sum\sum} S(\mathcal A_{p_1p_2p_3p_4}, p_3),\\
\Gamma_{18} := & \underset{\substack{x^{\tau_2} \le p_1 <p_2 < x^{\tau_3} \le p_3 < p_4< x^{\rho}}}{\sum\sum\sum\sum} S(\mathcal A_{p_1p_2p_3p_4}, p_3),\\
\Gamma_{19} := & \underset{\substack{x^{\tau_1} \le p_1 x^{\tau_2} \\ x^{\tau_3} \le p_2 < p_3 < p_4< x^{\rho}}}{\sum\sum\sum\sum} S(\mathcal A_{p_1p_2p_3p_4},  p_3),\\
\Gamma_{20} := & \underset{\substack{x^{\tau_2} \le p_1 < x^{\tau_3} \le p_2 < p_3 < p_4< p_5 < x^{\rho}}}{\sum\sum\sum\sum\sum} S(\mathcal A_{p_1p_2p_3p_4p_5}, p_4),\\
\Gamma_{21} := & \underset{\substack{x^{\tau_3} \le p_1 <p_2 < p_3 < p_4< p_5<p_6 <x^{\rho}}}{\sum\sum\sum\sum\sum\sum} S(\mathcal A_{p_1p_2p_3p_4p_5p_6}, p_5).
\end{align*}
\end{lemma}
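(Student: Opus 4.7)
The statement is Lemma 4.2 of Wu \cite{WuII} in the specialization $d=1$, $\sigma=1$, so the plan is to reproduce Wu's derivation under these parameters and then verify that his twenty-one terms collapse to the ones listed here. The engine is a carefully engineered iteration of Buchstab's identity
\[
S(\mathcal{A}_q, z) \;=\; S(\mathcal{A}_q, w) \;-\; \sum_{w\le p< z} S(\mathcal{A}_{qp}, p), \qquad w \le z,
\]
starting from $5S(\mathcal{A}, x^\rho)$ and choosing, at each iteration step and for each newly introduced prime $p$, whether to (i) stop and retain the term, (ii) re-apply Buchstab down to some earlier threshold $\rho',\tau_1,\tau_2$ or $\tau_3$, or (iii) discard a nonnegative term using $S(\mathcal{A}_q,p)\ge 0$, which is what produces the inequality rather than an identity.

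First I would apply Buchstab once at level $x^{\rho'}$, obtaining $5 S(\mathcal{A}, x^{\rho'}) - 5\sum_{x^{\rho'} \le p_1 < x^{\rho}} S(\mathcal{A}_{p_1}, p_1)$, and then split the $p_1$-sum into the five subranges determined by $\rho', \tau_1, \tau_2, \tau_3, \rho$. Within each subrange I would re-Buchstab $S(\mathcal{A}_{p_1}, p_1)$ a second time (either down to $x^{\rho'}$ or to $p_1$, depending on the range) and repeat, up to six nested primes. This should match the structure of $\Gamma_1,\ldots,\Gamma_{21}$ exactly: $\Gamma_1$ is the retained term after a single Buchstab step (bundled with a trivial $S(\mathcal{A},x^{\tau_1})$ summand to absorb a discarded piece); $\Gamma_2$--$\Gamma_4$ are single-prime terms with sift down to $x^{\rho'}$; $\Gamma_5$--$\Gamma_8$ are two-prime terms; $\Gamma_9$--$\Gamma_{15}$ are three-prime; $\Gamma_{16}$--$\Gamma_{19}$ four-prime; $\Gamma_{20}$ five-prime; $\Gamma_{21}$ six-prime. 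The overall coefficient $5$ on the left is the minimal one that allows the positive and negative pieces generated by the recursion to cancel leaving the listed ranges.

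Second comes the bookkeeping. Since each Buchstab step is exact, the final inequality arises solely from discarding non-negative $S$-values in regions not covered by any of the listed $\Gamma_n$. I would verify, range by range in the $6$-dimensional configuration space of $(p_1,\ldots,p_6)$, that the signs line up so that every term either appears (with the correct multiplicity) in exactly one $\Gamma_n$ or is a nonnegative term legitimately dropped. The hypotheses $3/10 \ge \rho \ge \tau_3>\tau_2>\tau_1\ge \rho'\ge 1/20$ are what ensure the ranges in $\Gamma_7,\Gamma_8,\Gamma_9,\ldots$ are nonempty and nested consistently.

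The main obstacle is purely combinatorial rather than analytic: one must chase through Wu's case distinctions, substitute $d=1,\sigma=1$, and confirm that no region of the prime-tuple space is over- or under-counted. No new sieve input is needed for the lemma itself, since Buchstab's identity is exact; the real analytic content of the paper appears when the individual $\Gamma_n$ are estimated using Proposition \ref{prop:lvlalpha1}, Corollary \ref{cor:lvlalphak}, and the switching principle. For this reason I would present the derivation mostly by reference to \cite[Lemma 4.2]{WuII}, recording only the specialization and the (straightforward) check that the $\Gamma_n$ listed above exhaust Wu's right-hand side.
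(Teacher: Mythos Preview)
Your proposal is correct and matches the paper's approach exactly: the paper's proof is simply the one-line citation ``This is \cite[Lemma 4.2]{WuII} from Wu with $d=1$, $\sigma=1$,'' together with the remark that the underlying idea is to iterate the Buchstab identity and drop nonnegative terms by positivity. Your outline is a more expanded version of precisely this, so presenting it by reference to Wu as you suggest is entirely appropriate.
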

\begin{proof}
This is \cite[Lemma 4.2]{WuII} from Wu with $d=1$, $\sigma=1$. Here we simplify Wu's notation slightly, using
$(\underline{d}^{1/s},\underline{d}^{1/\kappa_3} , \underline{d}^{1/\kappa_2}, \underline{d}^{1/\kappa_1}, \underline{d}^{1/s'}) = (x^\rho, x^{\tau_3},x^{\tau_2},x^{\tau_1},x^{\rho'})$. The basic proof idea is to iterate the Buchstab identity and to strategically neglect some terms by positivity.
\end{proof}

\subsection{Computations}

Given $0.1\le \rho' \le \tau_1 < 0.2 \le \tau_2 < \tau_3 \le \rho \le 0.3.$, we define integrals $I_{n}=I_{n}(\rho,\rho',\tau_1,\tau_2,\tau_3)$ by
\begin{align}\label{eq:defIn}
I_n &= \int_{\D_{n}}\omega\Big(\frac{1-t-u-v}{u}\Big)\frac{\dd{t}\dd{u}\dd{v}}{tu^2v} \quad\qquad\qquad(9\le n\le 15),\nonumber\\
I_n &= \int_{\D_{n}}\omega\Big(\frac{1-t-u-v-w}{v}\Big)\frac{\dd{t}\dd{u}\dd{v}\dd{w}}{tuv^2w} \quad\qquad(16\le n\le19), \\
I_{20} &= \int_{\D_{20}}\omega\Big(\frac{1-t-u-v-w-x}{w}\Big)\frac{\dd{t}\dd{u}\dd{v}\dd{w}\dd{x}}{tuvw^2x},\nonumber\\
I_{21} &= \int_{\D_{21}}\omega\Big(\frac{1-t-u-v-w-x-y}{x}\Big)\frac{\dd{t}\dd{u}\dd{v}\dd{w}\dd{x}\dd{y}}{tuvwx^2y},\nonumber
\end{align}
where $\omega$ is the Buchstab function, and where the domains $\D_{n}$ are
\begin{align*}
\D_{9} &= \{(t,u,v) : \tau_1 < t<u<v < \tau_3\}, \\
\D_{10} &= \{(t,u,v) : \tau_1 < t<u< \tau_2 < v < \rho\}, \\
\D_{11} &= \{(t,u,v) : \tau_1 < t<\tau_2<u < v < \tau_3\}, \\
\D_{12} &= \{(t,u,v) : \rho' < t<u<\tau_1, \ \tau_3 < v < \rho\}, \\
\D_{13} &= \{(t,u,v) : \rho' < t<\tau_1 < u <\tau_2< v < \rho\}, \\
\D_{14} &= \{(t,u,v) : \rho' < t<\tau_1, \ \tau_2 < u < v < \rho\}, \\
\D_{15} &= \{(t,u,v) : \tau_1 < t<\tau_2 < u < \tau_3 <  v < \rho\}, \\
\D_{16} &= \{(t,u,v,w) : \tau_2 < t<u<v<w<\tau_3\}, \\
\D_{17} &= \{(t,u,v,w) : \tau_2 < t<u<v<\tau_3<w<\rho\}, \\
\D_{18} &= \{(t,u,v,w) : \tau_2 < t<u<\tau_3<v<w<\rho\}, \\
\D_{19} &= \{(t,u,v,w) : \tau_1 < t<\tau_2, \ \tau_3 <u<v<w<\rho\}, \\
\D_{20} &= \{(t,u,v,w,x) : \tau_2 < t<\tau_3 < u<v<w<x<\rho\}, \\
\D_{21} &= \{(t,u,v,w,x,y) : \tau_3 < t< u<v<w<x<y<\rho\}.
\end{align*}

Recall the definitions \eqref{eq:thetat} and \eqref{eq:thetat123},
\begin{align*}
\theta(t) = \begin{cases}
\frac{2-t}{3} & \text{if} \ \ t > \frac{1}{5},\\
\frac{1+t}{2} & \text{if} \ \ t \le \, \frac{1}{5}.
\end{cases}
\end{align*}
We let $\theta_{\epsilon} = \frac{3-\epsilon}{5}$ and
\begin{align*}
\theta(t,u,v) := \max\Big\{ \, &\frac{3-v}{5},\,\theta(t),\,\theta(u),\,\theta(t+u+v),\\
& \ \ \theta(t+u),\,\theta(t+v), \,\theta(u+v)\Big\}.
\end{align*}

We also define
\begin{align}\label{eq:defG14}
G_1 &= 4G(\rho') + G(\tau_1), & G_3 &= G_0 + \overline{G}(\tau_2),\\
G_2 &= G_0 + \overline{G}(\rho), & G_4 &= G_0 + \overline{G}(\tau_3),\nonumber
\end{align}
where for $c\le 1/5$,
\begin{align}\label{eq:Gc}
G(c) &= \frac{1}{\epsilon}\,F\big(\theta_\epsilon/\epsilon\big) - \frac{1}{\epsilon}\int_{\epsilon}^{c}\frac{\dd{t}}{t}f\big((\theta_\epsilon - t)/\epsilon\big) + \frac{1}{\epsilon}\int_{\epsilon}^{c}\int_{\epsilon}^{t}\frac{\dd{t}\dd{u}}{t u} F\big((\theta_\epsilon - t-u)/\epsilon\big)\\
&\
 -\int_{\epsilon}^{c}\int_{\epsilon}^{t}\int_{\epsilon}^u \frac{\dd{t}\dd{u}\dd{v}}{t u v^2} f\big((\theta(t,u,v) - t-u-v)/v\big),\nonumber
\end{align}
and for $c>1/5$,
\begin{align}\label{eq:barGc}
\overline{G}(c) &= - \frac{1}{\epsilon}\int_{1/5}^{c}\frac{\dd{t}}{t}f\big((\theta(t) - t)/\epsilon\big) + \int_{1/5}^{c}\int_{\epsilon}^{\rho'}\frac{\dd{t}\dd{u}}{tu^2}F\big((\theta(t) - t-u)/u\big)
\end{align}
as well as
\begin{align}
G_0 &= - \frac{1}{\epsilon}\int_{\rho'}^{1/5}\frac{\dd{t}}{t}f\big((\theta_{\epsilon} - t)/\epsilon\big) + \frac{1}{\epsilon}\int_{\rho'}^{1/5}\int_{\epsilon}^{\rho'}\frac{\dd{t}\dd{u}}{t u} F\big((\theta_{\epsilon} - t-u)/\epsilon\big)\\
&\
-\int_{\rho'}^{1/5}\int_{\epsilon}^{\rho'}\int_{\epsilon}^u \frac{\dd{t}\dd{u}\dd{v}}{t u v^2} f\big((\theta(t,u,v) - t-u-v)/v\big). \nonumber
\end{align}

We similarly let
\begin{align}\label{eq:defG68}
G_5 & = \frac{1}{\epsilon}\int_{\rho'}^{1/5}\int_{\rho'}^{t} \frac{\dd{t}\dd{u}}{t u} \,F\big((\theta_\epsilon - t-u)/\epsilon\big) \ + \ \frac1{\rho'}\int_{1/5}^{\tau_2}\int_{\rho'}^{t} \frac{\dd{t}\dd{u}}{t u} \,F\big((\theta(t) - t-u)/\rho'\big)\nonumber\\
&\ - \int_{\rho'}^{1/5}\int_{\rho'}^{t}\int_{\epsilon}^{\rho'} \frac{\dd{t}\dd{u}\dd{v}}{t u v^2} \,f\big((\theta(t,u,v) - t-u-v)/v\big),\nonumber\\
G_6 & = \frac{1}{\rho'}\int_{\tau_2}^{\tau_3}\int_{\rho'}^{\tau_1} \frac{\dd{t}\dd{u}}{t u} \,F\big((\theta(t) - t-u)/\rho'\big),
\end{align}
\begin{align*}
G_7 &= \frac1{\epsilon}\int_{\rho'}^{\tau_1}\int_{\rho'}^{t} \frac{\dd{t}\dd{u}}{t u} \,F\big((\theta_\epsilon - t-u)/\epsilon \big)\\
&\ - \int_{\rho'}^{\tau_1}\int_{\rho'}^{t} \int_{\epsilon}^{u} \frac{\dd{t}\dd{u}\dd{v}}{t u v^2} \,f\big((\theta(t,u,v) - t-u-v)/v\big),\\
G_8 &= \frac{1}{\epsilon}\int_{\tau_1}^{1/5}\int_{\rho'}^{\tau_1} \frac{\dd{t}\dd{u}}{t u} \,F\big((\theta_\epsilon - t-u)/\epsilon\big) \ + \int_{1/5}^{\tau_2}\int_{\rho'}^{\tau_1} \frac{\dd{t}\dd{u}}{t u^2} \,F\big((\theta(t) - t-u)/u\big)\\
&\ - \int_{\tau_1}^{1/5}\int_{\rho'}^{\tau_1} \int_{\epsilon}^{u} \frac{\dd{t}\dd{u}\dd{v}}{t u v^2} \,f\big((\theta(t,u,v) - t-u-v)/v\big).
\end{align*}

Recall the sieve functions $F,f$ satisfy $F(s)=2e^\gamma/s$ for $s\in[1,3]$, $f(s) = 2e^\gamma\log(s-1)/s$ for $s\in[2,4]$ and $F(s) = 2e^\gamma/s\cdot[1+\int_2^{s-1} f(t)\dd{t}]$ for all $s\ge1$.

The main bound is the following.

\begin{proposition}\label{prop:wu}
Let $0<\epsilon \le 0.1\le \rho' \le \tau_1 < 0.2 \le \tau_2 < \tau_3 \le \rho \le 0.3.$ Then for $I_n$, $G_n$, and $G(c)$ as in \eqref{eq:defIn},\eqref{eq:defG14},\eqref{eq:defG68}, and \eqref{eq:Gc}, we have
\begin{align}
S(\mathcal A, x^\rho) \  \lesssim \ \frac{\Pi(x)}{5e^\gamma} \bigg( \sum_{n=1}^8 G_n \ + \  G(\tfrac{1}{5})\sum_{n=9}^{21}I_n \bigg).
\end{align}
\end{proposition}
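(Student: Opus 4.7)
The plan is to first apply the weighted sieve inequality of Lemma \ref{lem:WuII42}, giving $5S(\mathcal A, x^\rho) \lesssim \sum_{n=1}^{21}\Gamma_n$, and then to bound each $\Gamma_n$ using the modified linear sieve together with the variable-level equidistribution estimates from Proposition \ref{prop:lvlalpha1} and Corollary \ref{cor:lvlalphak}. The normalization $\Pi(x)/(5e^\gamma)$ in the target bound will arise from the Mertens-type asymptotic $|\mathcal A| V(x^c) \sim \Pi(x)/(c e^\gamma)$, combined with the prefactor 5 on both sides.

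For each of $\Gamma_1, \ldots, \Gamma_8$, I would iteratively apply the Buchstab identity to lower the sieve threshold of each $S(\mathcal A_\bullet, z)$ down to $x^\epsilon$. Each Buchstab step exposes a new prime $x^{t_i}$ (to be summed out against the PNT via $\sum 1/p \to \int dt/t$), leaving a smooth sieve problem that can be handled by the linear sieve. The level of distribution at each stage depends on the anatomy of the exposed primes: for $r \le 2$ primes all of size at most $x^{1/5}$, Proposition \ref{prop:lvlalpha1} gives the smooth level $\theta_\epsilon = (3-\epsilon)/5$; for a single prime of size $x^{t_1}$ with $t_1 > 1/5$, level $\theta(t_1) = (2-t_1)/3$; and for $r \ge 3$ primes with $t_1 \le 1/5$, the piecewise level $\theta(t_1,t_2,t_3)$ from Corollary \ref{cor:piecewiseeta}. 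Collecting upper-bound ($F$) and lower-bound ($f$) sieve contributions for the positive and negative $\Gamma_n$ respectively, and integrating against the PNT density, reproduces the $G_n$ expressions in \eqref{eq:defG14}--\eqref{eq:defG68} verbatim.

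For $\Gamma_9, \ldots, \Gamma_{21}$, the sifting threshold coincides with one of the exposed primes, so the direct approach is too crude. I would instead invoke the switching principle: each term $S(\mathcal A_{p_1\cdots p_r}, p_j)$ counts primes $q$ with $q+2 = p_1\cdots p_r q_1'\cdots q_k'$ and $q_i' \ge p_j$; switching, I sieve the set $\mathcal B = \{p_1\cdots p_r q_1'\cdots q_k' - 2\}$ for primality. Buchstab-decomposing $S(\mathcal B, x^{1/5})$ down to $x^\epsilon$ and applying Corollary \ref{cor:lvlalphak} (for equidistribution of products of primes with programmably factorable weights at level $x^{\theta_\epsilon}$) produces an upper bound proportional to $G(1/5)$, i.e.\ the same Buchstab-iterated template as in the previous paragraph but with cutoff $1/5$. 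The remaining integration over the prime ranges, with the Buchstab function $\omega$ arising from Lemma \ref{lem:Buchstab} applied to the rough tail $q_1' \cdots q_k'$, yields precisely the integrals $I_9, \ldots, I_{21}$ defined in \eqref{eq:defIn}.

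The main technical obstacle will be the careful case analysis to ensure that at every stage of every Buchstab iteration, the anatomy of the exposed primes falls in a regime where the strongest available level of distribution from Proposition \ref{prop:lvlalpha1} or Corollary \ref{cor:lvlalphak} can be invoked. In particular, one must track when the smooth regime $\theta_\epsilon$ is applicable (requiring both $t_1 \le 1/5$ and the remaining modulus being $x^\epsilon$-smooth) versus the piecewise $\theta(t_1,t_2,t_3)$ regime, and verify that the integration domains $\mathbb D_n$ in \eqref{eq:defIn} and the integrations in $G_n$ partition this case analysis exactly. The choice of $\rho' \ge 1/10$ and $\tau_3 \le 0.3$ in Proposition \ref{prop:wu} is made precisely so that every anatomy encountered stays within the valid range of these level-of-distribution inputs.
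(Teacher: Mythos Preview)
Your proposal is correct and follows essentially the same approach as the paper's proof: apply Lemma \ref{lem:WuII42}, handle $\Gamma_1,\ldots,\Gamma_8$ by Buchstab iteration down to $x^\epsilon$ combined with the linear sieve at the variable levels $\theta_\epsilon$, $\theta(t_1)$, $\theta(t_1,t_2,t_3)$ via Proposition \ref{prop:lvlalpha1}, and handle $\Gamma_9,\ldots,\Gamma_{21}$ by switching to $\mathcal B$, bounding $S(\mathcal B,x^{1/2})\le S(\mathcal B,x^{1/5})$, and then repeating the same Buchstab-plus-linear-sieve template using Corollary \ref{cor:lvlalphak} together with Lemma \ref{lem:Buchstab} to extract the $I_n$ integrals. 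The only detail you glossed over is that the switched sifted sum is naturally $S(\mathcal B,x^{1/2})$ (one wants $m-2$ prime), which the paper then relaxes to $S(\mathcal B,x^{1/5})$ before iterating; otherwise your outline matches the paper's argument.
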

\begin{proof}
We first bound $S(\mathcal A,x^{c})$ for $c\in [\epsilon,1/5]$. By the Buchstab identity,
\begin{align*}
S(\mathcal A,x^{c}) = S(\mathcal A,x^{\epsilon}) - \sum_{x^{\epsilon}\le p< x^{c}}S(\mathcal A_p,p).
\end{align*}
Iterating twice more, we obtain
\begin{align}\label{eq:Buchstabiter}
S(\mathcal A,x^{c}) \ &= \ S(\mathcal A,x^{\epsilon}) - \sum_{x^{\epsilon}\le p_1< x^{c}} S(\mathcal A_{p_1},x^{\epsilon})\\
& + \sum_{x^{\epsilon}\le p_2< p_1< x^{c}} S(\mathcal A_{p_1p_2},x^{\epsilon}) - \sum_{x^{\epsilon}\le p_3< p_2< p_1< x^{c}} S(\mathcal A_{p_1p_2p_3},p_3).\nonumber
\end{align}
To each term $S(\mathcal A_{d},x^{\epsilon})$ above, we apply the linear sieve of level $x^{\theta_\epsilon}$ for $\theta_\epsilon = \frac{3-\epsilon}{5}$, as in Theorem \ref{thm:wellfactor}. And to each term $S(\mathcal A_{p_1p_2p_3},p_3)$, we apply the linear sieve of level $x^{\theta}$ for $\theta = \theta(t_1,t_2,t_3)$, where $p_i=x^{t_i}$. 

To handle the corresponding error terms, note for primes $x^{\epsilon}\le p_2<p_1<x^c\le x^{1/5}$ and $d\in\{1,p_1,p_1p_2\}$, the prime factors of $q/d$ above are bounded by $x^{\epsilon}$ so that the sets $\mathcal A_q$ are equidistributed to level $\theta_\epsilon$. Hence for each $x^{\epsilon}\le p_2<p_1< x^{1/5}$, by Proposition \ref{prop:lvlalpha1} with $u=\epsilon$, $b=1$, $p_1$, $p_1p_2$, we have
\begin{align*}
\sum_{\substack{q\le x^{\theta_{\epsilon}}\\q\mid P(x^{\epsilon})}} \widetilde{\lambda}^+(q) \, \Big(|\mathcal A_{q}| - \frac{|\mathcal A|}{\phi(q)}\Big) 
= \sum_{\substack{q\le x^{\theta_{\epsilon}}\\q\mid P(x^{\epsilon})}} \widetilde{\lambda}^+(q) \, \Big(\pi(x;q,-2) - \frac{\pi(x)}{\phi(q)}\Big) \ \ll_A \ \frac{x}{(\log x)^A},
\end{align*}
and 
\begin{align*}
\sum_{p_1}\sum_{\substack{q=p_1m\le x^{\theta_{\epsilon}}\\ m\mid P(x^{\epsilon})}} \widetilde{\lambda}^-(q) \, \Big(|\mathcal A_{q}| - \frac{|\mathcal A|}{\phi(q)}\Big) \ll_A \frac{x}{(\log x)^A},
\end{align*}
and
\begin{align*}
\sum_{p_2,p_1}\sum_{\substack{q=p_1p_2m\le x^{\theta_{\epsilon}}\\m\mid P(x^{\epsilon})}} \widetilde{\lambda}^+(q) \, \Big(|\mathcal A_{q}| - \frac{|\mathcal A|}{\phi(q)}\Big) \ll_A  \frac{x}{(\log x)^A}.
\end{align*}
In addition for each $p_3<p_2<p_1< x^{1/5}$, $p_i=x^{t_i}$, letting $\theta=\theta(t_1,t_2,t_3)$, by Proposition \ref{prop:lvlalpha1} with $b=p_1p_2p_3$,
\begin{align*}
\sum_{p_3,p_2,p_1}\sum_{\substack{q=p_1p_2p_3m\le x^\theta\\m\mid P(p_3)}} \widetilde{\lambda}^-(q) \, \Big(|\mathcal A_{q}| - \frac{|\mathcal A|}{\phi(q)}\Big) \ \ll_A \  \frac{x}{(\log x)^A}.
\end{align*}

Thus for $c\le \frac{1}{5}$, the linear sieve bounds give
\begin{align}\label{eq:iter0}
S(\mathcal A,x^{\epsilon}) \ \lesssim \ |\mathcal A|V(x^{\epsilon}) F\big(\tfrac{\theta_{\epsilon}}{\epsilon}\big),
\end{align}
and
\begin{align}\label{eq:iter1}
\sum_{x^{\epsilon}\le p_1< x^{c}} S(\mathcal A_{p_1},x^{\epsilon}) \ \gtrsim \ \sum_{x^{\epsilon}\le p_1< x^{c}}|\mathcal A|g(p_1) V(x^{\epsilon})f\big(\tfrac{\theta_{\epsilon}-t_1}{\epsilon}\big),
\end{align}
and
\begin{align}\label{eq:iter2}
\sum_{x^{\epsilon}\le p_2< p_1< x^{c}} S(\mathcal A_{p_1p_2},x^{\epsilon}) \ \lesssim \ \sum_{x^{\epsilon}\le p_2< p_1< x^{c}} |\mathcal A|g(p_1p_2) V(x^{\epsilon})F\big(\tfrac{\theta_{\epsilon}-t_1-t_2}{\epsilon}\big),
\end{align}
and
\begin{align}\label{eq:iter3}
 \sum_{x^{\epsilon}\le p_3< p_2< p_1< x^{c}} S(\mathcal A_{p_1p_2p_3},p_3) \ \gtrsim \
\sum_{x^{\epsilon}\le p_3< p_2< p_1< x^{c}}
|\mathcal A| g(p_1p_2p_3)  V(p_3)f\big(\tfrac{\theta-t_1-t_2-t_3}{t_3}\big).
\end{align}

Hence by \eqref{eq:iter0}, \eqref{eq:iter1}, \eqref{eq:iter2}, \eqref{eq:iter3}, we observe that \eqref{eq:Buchstabiter} becomes
\begin{align}
 &S(\mathcal A,x^{c}) \  \lesssim \ - \ |\mathcal A|\sum_{x^{\epsilon}\le p_3< p_2< p_1< x^c}
g(p_1p_2p_3)V(p_3)f\big(\tfrac{\theta-t_1-t_2-t_3}{\epsilon}\big)\\
& \ + \ |\mathcal A|V(x^{\epsilon})\bigg( F\big(\tfrac{\theta_{\epsilon}}{\epsilon}\big) - \sum_{x^{\epsilon}\le p_1< x^{c}}g(p_1)f\big(\tfrac{\theta_{\epsilon}-t_1}{\epsilon}\big) + \sum_{x^{\epsilon}\le p_2< p_1< x^{c}} g(p_1p_2)F\big(\tfrac{\theta_{\epsilon}-t_1-t_2}{\epsilon}\big)\bigg). \nonumber
\end{align}
Recall $V(z)\sim \mathfrak{S}_2/e^\gamma \log z$ by Mertens theorem.
Thus by partial summation and the prime number theorem, we obtain
\begin{align}
S(\mathcal A,x^{c}) \ \lesssim \ \frac{\Pi(x)}{e^\gamma}G(c),
\end{align}
for $G(c)$ as in \eqref{eq:Gc}. Hence for $c=\rho',\tau_1$ we have $c\in [\epsilon,1/5]$, so we bound $\Gamma_1$ as
\begin{align*}
\Gamma_1 = 4S(\mathcal A,x^{\rho'}) + S(\mathcal A,x^{\tau_1}) \ \lesssim \ \frac{\Pi(x)}{e^\gamma}\Big(4G(\rho') + G(\tau_1)\Big) = \frac{\Pi(x)}{e^\gamma}G_1.
\end{align*}

Now consider $c,c'\in[1/5,2/7]$. We shall apply the linear sieve of level $\theta(t_1)$, as in \eqref{eq:thetat}. In general, for $p_1=x^{t_1}$ and $\theta(t_1)$ as in \eqref{eq:thetat}, Proposition \ref{prop:lvlalpha1} gives
\begin{align*}
\sum_{p_1}\sum_{\substack{p_1\mid q,\;q\le x^{\theta(t_1)}\\q/p_1\mid P(p_1)}} \widetilde{\lambda}^-(q) \, \Big(|\mathcal A_{q}| - \frac{|\mathcal A|}{\phi(q)}\Big) \ \ll_A \ \frac{x}{(\log x)^A},
\end{align*}
so that for $c,c'\in[1/5,2/7]$, the linear sieve of level $\theta(t_1)$ gives
\begin{align}\label{eq:iter51}
\sum_{x^{c'}\le p_1< x^{c}}  S(\mathcal A_{p_1},p_1) \ \gtrsim \ \sum_{x^{c'}\le p_1< x^{c}} |\mathcal A|g(p_1) V(p_1)f\big(\tfrac{\theta(t_1)-t_1}{t_1}\big).
\end{align}
Thus by partial summation and the prime number theorem,
\begin{align*}
\Gamma_2 = \sum_{x^{\rho'} \le p < x^{\rho}} S(\mathcal A_{p}, x^{\rho'}) \lesssim
\frac{\Pi(x)}{e^\gamma}\big(G + \overline{G}(\rho)\big) = \frac{\Pi(x)}{e^\gamma}\,G_2.
\end{align*}

Similarly, we obtain
\begin{align}\label{eq:Gamma18}
\Gamma_n \ \lesssim \ \frac{\Pi(x)}{e^\gamma} G_n \qquad\qquad \text{for }1\le n\le 8.
\end{align}

Finally, for the remaining $\Gamma_n$, we apply the switching principle. Namely, for $\Gamma_9$ we have
\begin{align}
\Gamma_9 = \underset{\substack{x^{\tau_1} \le p_1 <p_2 <p_3 < x^{\tau_3}}}{\sum\sum\sum} S(\mathcal A_{p_1p_2p_3}, p_2) \ = \ S(\mathcal B, x^{1/2}) \ + \ O(x^{1/2})
\end{align}
for the set
\begin{align}\label{eq:Bswitch}
\mathcal B \;=\; \{p_1p_2p_3m-2 \le x \; : \;  x^{\tau_1} \le & p_1 <p_2 <p_3 < x^{\tau_3},
\ p'\mid m\Rightarrow p'\ge p_2\}. 
\end{align}
Note since $p_2,p_1 > x^{\tau_1} > x^{0.1}$, each $m$ above has at most 7 prime factors.

Now by a standard subdivision argument, $\mathcal B$ is similarly equidstributed in arithmetic progressions as is $\mathcal A$. Indeed, the basic idea is to partition
$\mathcal B = \bigcup_{r\le 7} \mathcal B^{(r)}$, where $\mathcal B^{(r)}$ is the subset corresponding to integers $m$ with $r$ prime factors. Then we cover the prime tuples $(p_1,\ldots,p_r)$ into hypercubes of the form
\begin{align*}
\big[\Delta^{l_1},\Delta^{l_1+1}\big)\times \cdots\times \big[\Delta^{l_r},\Delta^{l_r+1}\big),
\end{align*}
for $\Delta=1+(\log x)^{-B}$ with $B>0$ sufficiently large, and apply Corollary \ref{cor:lvlalphak} with $u=\epsilon$, $b=1$. This gives
\begin{align*}
\sum_{\substack{q\le x^{\theta_{\epsilon}}\\q\mid P(x^{\epsilon})}} \widetilde{\lambda}^+(q) \, \Big(|\mathcal B_{q}| - \frac{|\mathcal B|}{\phi(q)}\Big)
\ \ll_A \ \frac{x}{(\log x)^A}.
\end{align*}
Similarly for each $x^\eps<p_3'<p_2'<p_1'<x^{1/5}$, by Corollary \ref{cor:lvlalphak} with $u=\epsilon$ and $b=p_1'$, $p_1'p_2'$, we have
\begin{align*}
\sum_{p_1'}\sum_{\substack{q=p_1'm'\le x^{\theta_{\epsilon}}\\m'\mid P(x^{\epsilon})}} \widetilde{\lambda}^-(q) \, \Big(|\mathcal B_{q}| - \frac{|\mathcal B|}{\phi(q)} \Big) \ \ll_A \ \frac{x}{(\log x)^A},
\end{align*}
and 
\begin{align*}
\sum_{p_2',p_1'}\sum_{\substack{q=p_1'p_2'm'\le x^{\theta_{\epsilon}}\\m'\mid P(x^{\epsilon})}} \widetilde{\lambda}^+(q) \, \Big(|\mathcal B_{q}| - \frac{|\mathcal B|}{\phi(q)}\Big) \ \ll_A \  \frac{x}{(\log x)^A}.
\end{align*}
In addition for each $p_3'<p_2'<p_1'< x^{1/5}$, $p_i'=x^{t_i}$, letting $\theta=\theta(t_1,t_2,t_3)$, by Corollary \ref{cor:lvlalphak} with $b=p_1'p_2'p_3'$, we have
\begin{align*}
\sum_{p_3',p_2',p_1'}\sum_{\substack{q=p_1'p_2'p_3'm'\le x^\theta\\m'\mid P(p_3')}} \widetilde{\lambda}^-(q) \, \Big(|\mathcal B_{q}| - \frac{|\mathcal B|}{\phi(q)}\Big) \ \ll_A \   \frac{x}{(\log x)^A}.
\end{align*}

Iterating the Buchstab identity, we have
\begin{align*}
S(\mathcal B,x^{1/2}) \le S(\mathcal B,x^{1/5}) \ &= \ S(\mathcal B,x^{\epsilon}) - \sum_{x^{\epsilon}\le p_1'< x^{1/5}} S(\mathcal B_{p_1'},x^{\epsilon})\\
& + \sum_{x^{\epsilon}\le p_2'< p_1'< x^{1/5}} S(\mathcal B_{p_1'p_2'},x^{\epsilon}) - \sum_{x^{\epsilon}\le p_3'< p_2'< p_1'< x^{1/5}} S(\mathcal B_{p_1'p_2'p_3'},p_3'),
\end{align*}
and hence by the linear sieve bounds we obtain
\begin{align}
\Gamma_9 \lesssim S(\mathcal B, x^{1/2}) \le S(\mathcal B, x^{1/5}) \ &\lesssim \ e^{-\gamma}\,\frac{G(\tfrac{1}{5})}{\log x}\mathfrak{S}_2 |\mathcal B|.
\end{align}

Now to compute $|\mathcal B|$ in \eqref{eq:Bswitch}, by Lemma \ref{lem:Buchstab} we have
\begin{align*}
|\mathcal B| \ \sim \  \frac{x}{\log x}\int_{\tau_1 < t_1<t_2<t_3<\tau_2}\frac{\dd{t_1}\dd{t_2}\dd{t_3}}{t_1t_2^2t_3}\omega\Big(\frac{1-t_1-t_2-t_3}{t_2}\Big) = \frac{x}{\log x} \cdot I_9.
\end{align*}
Thus we have $\Gamma_9 \lesssim e^{-\gamma}G(\tfrac{1}{5}) \Pi(x)  I_9$. Similarly, we obtain
\begin{align}\label{eq:Gamma9}
\Gamma_n \ \lesssim \ G(\tfrac{1}{5})\frac{\Pi(x)}{e^{\gamma}} \,I_n \qquad\qquad (\text{for }9\le n\le 21).
\end{align}
Hence plugging \eqref{eq:Gamma18}, \eqref{eq:Gamma9} into Lemma \ref{lem:WuII42} completes the proof.
\end{proof}

Let
\begin{align}\label{eq:params}
\rho  &= 0.27195, & 
\tau_3 &= 0.24589, \nonumber\\
\rho' &= 0.12313, &
\tau_2 &= 0.20867,\\
\epsilon &= 0.002, & \tau_1 &= 0.16288. \nonumber
\end{align}

For such choices of parameters, we compute the following integrals from Proposition \ref{prop:wu},
\begin{align*}
\sum_{9\le n\le 21} I_n \ \le \ 0.174404,\qquad
\sum_{1\le n\le 8} G_n \ \le \ 28.34581,
\qquad  G(\tfrac{1}{5}) \ \le \ 5.99237.
\end{align*}
Thus by Proposition \ref{prop:wu}, we obtain the bound
\begin{align}\label{eq:pi2oneiteration}
\pi_2(x) 
\ & \lesssim \ S(\mathcal A, x^\rho) \  \lesssim \ \frac{\Pi(x)}{5e^\gamma} \bigg( \sum_{n=1}^8 G_n \ + \  G(\tfrac{1}{5})\sum_{n=9}^{21}I_n \bigg)
\ \lesssim  \ 3.30042\,\Pi(x).
\end{align}
We also record the individual bounds (see table at end of paper).

\subsection{Completing the proof of Theorem \ref{thm:twinbound}}
We shall refine our argument in certain cases for which Wu's iteration method \cite{WuII} applies directly without modification. As such we have chosen simplicity over full optimization.

In the lemma below, we consider the cases of sets $\mathcal A_{p_1p_2}$, where $p_1,p_2$ lie in a prescribed range, and such that for all multiples $bp_1p_2$, the sets $\mathcal A_{bp_1p_2}$ are equidistributed to level $x^\theta$ (we also need level $x^\theta$ for corresponding switched sets $\mathcal B$ of integers $mbp_1p_2-2$).

\begin{lemma}\label{lem:Wuiterate}
Let $\theta\in[1/2,1]$, $s\in[2,3]$, and $\mathcal A = \{p+2:p\le x\}$. There is a function $H_\theta(s)$, monotonically increasing in $\theta$ for fixed $s$ and decreasing in $s$ for fixed $\theta$, such that the following holds:
For each $(D_1,D_2)\in{\bf D}_2^{\textnormal{well}}(x^\theta)$, we have
\begin{align}\label{eq:Wusave}
\sum_{\substack{D_1<p_1<D_1^{1+\tau}\\D_2<p_2<D_2^{1+\tau}}}
S(\mathcal A_{p_1p_2}, z) \ \lesssim \ \frac{\Pi(x)}{e^\gamma}
\sum_{\substack{D_1<p_1<D_1^{1+\tau}\\D_2<p_2<D_2^{1+\tau}}} \frac{\log x}{\phi(p_1p_2)\log z}\big(F(s) - \frac{2e^\gamma}{s}H_\theta(s)\big),
\end{align}
where $z^s= x^\theta/p_1p_2$, provided \eqref{eq:programprimes} holds for $\lambda = \widetilde{\lambda}^+$ at level $D=x^\theta$ with $(x^{\eps_1},x^{\eps_2}) = (D_1,D_2)$, and provided for all vectors $(D_1,\ldots,D_r)\in{\bf D}_r^{\textnormal{well}}(x^\theta)$ extending $(D_1,D_2)$,
\begin{align*}
\sum_{\substack{b=p_1\cdots p_r\\ D_i< p_i \le D_i^{1+\tau}}}
\sum_{\substack{b\mid d,\;d\le x^\theta\\(d,a)=1}} &\widetilde{\lambda}^\pm(d) \,\Big(\pi(x;d,a)-\frac{\pi(x)}{\phi(d)}\Big) \ \ll_{a,\eps,A}
\frac{x}{(\log x)^A}.
\end{align*}
\end{lemma}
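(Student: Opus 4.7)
The plan is to apply Wu's iteration method \cite{WuII} to $S(\mathcal{A}_{p_1 p_2}, z)$, with the Bombieri--Friedlander--Iwaniec level $4/7$ replaced throughout by $\theta$. First I would apply the linear sieve upper bound of Theorem \ref{thm:wellfactor} at level $D = x^\theta/(p_1 p_2)$ to obtain
\begin{align*}
S(\mathcal{A}_{p_1 p_2}, z) \le |\mathcal{A}_{p_1 p_2}|\, V(z)\bigl(F(s) + o(1)\bigr) + R_{p_1 p_2},
\end{align*}
where $R_{p_1 p_2}$ is a weighted sum of remainders of the form $\widetilde{\lambda}^+(d)\, r_{\mathcal{A}}(p_1 p_2 d)$. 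Decomposing $\widetilde{\lambda}^+$ dyadically into boxes $D_j < p_j \le D_j^{1+\tau}$ associated to vectors $(D_1, \ldots, D_r) \in {\bf D}_r^{\textnormal{well}}(x^\theta)$ extending $(D_1, D_2)$, the assumed level-$x^\theta$ equidistribution bounds $R_{p_1 p_2}$ by $O_A\bigl(x/(\log x)^A\bigr)$. Invoking Mertens' theorem and partial summation converts the main term into the shape of the right-hand side of \eqref{eq:Wusave}, corresponding to the trivial case $H_\theta(s) = 0$.

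To extract nontrivial savings, I would apply Buchstab's identity iteratively to peel off additional primes from $S(\mathcal{A}_{p_1 p_2}, z)$: for a threshold $w < z$,
\begin{align*}
S(\mathcal{A}_{p_1 p_2}, z) = S(\mathcal{A}_{p_1 p_2}, w) - \sum_{w \le p_3 < z} S(\mathcal{A}_{p_1 p_2 p_3}, p_3),
\end{align*}
and continue at smaller thresholds as prescribed in \cite[\S 4--5]{WuII}. The key move is that, for suitably chosen ranges of each newly introduced prime, the Buchstab terms $S(\mathcal{A}_{p_1 p_2 p_3\cdots}, \cdot)$ can be evaluated \emph{asymptotically} (rather than merely bounded) by invoking the hypothesis \eqref{eq:programprimes} through Proposition \ref{prop:JMprogramprimes}, with $(x^{\eps_1}, x^{\eps_2})$ playing two of the three factor roles and the new prime playing the third (after standard subdivision into $\Delta$-boxes). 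This replaces the pessimistic linear-sieve bound on each such term by the true Hardy--Littlewood density, yielding a strictly positive saving over $F(s)$.

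Summing these pointwise savings across all Buchstab-derived configurations produces an explicit integral which I would \emph{define} to be $\tfrac{2e^\gamma}{s} H_\theta(s)$. Monotonicity is then read off the integral representation: raising $\theta$ enlarges the region of validity of each asymptotic evaluation, strictly adding nonnegative contributions; lowering $s$ (i.e., enlarging $z$) widens the range of Buchstab primes available, again strictly adding nonnegative contributions. The main obstacle is bookkeeping: at each Buchstab step one must verify that either (i) the extending-vector hypothesis supplies level $x^\theta$ for the resulting remainder — so the linear-sieve bound applies cleanly — or (ii) the grouping of factors matches the exponent hypothesis of \eqref{eq:programprimes} — so Proposition \ref{prop:JMprogramprimes} gives an asymptotic evaluation. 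Since the combinatorial structure is identical to that in \cite{WuII} (with $4/7 \mapsto \theta$), we inherit both the explicit form of $H_\theta(s)$ and its monotonicity directly from that construction, with no further modification required.
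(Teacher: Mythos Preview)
Your proposal misidentifies the mechanism that produces the savings $H_\theta(s)$. You claim that after Buchstab iteration the terms $S(\mathcal A_{p_1p_2p_3\cdots},\cdot)$ can be ``evaluated asymptotically'' via Proposition~\ref{prop:JMprogramprimes}, thereby replacing the linear sieve bound by the true Hardy--Littlewood density. This is not possible: these are sifted counts of primes $p$ with $p+2$ divisible by $p_1p_2\cdots$, and no equidistribution input for products of primes can evaluate them asymptotically---that is precisely the parity barrier. Proposition~\ref{prop:JMprogramprimes} (equivalently the hypothesis \eqref{eq:programprimes}) concerns products of $k$ primes in arithmetic progressions, not primes, and its role in the lemma is entirely different from what you describe.

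The paper's proof follows Wu's actual construction: one iterates not the bare Buchstab identity but the \emph{weighted sieve inequalities} (Lemmas~4.1 and 4.2 of \cite{WuII}, cf.\ Lemma~\ref{lem:WuII42} here), which are engineered to produce both non-switched terms $S(\mathcal A_{bp_1p_2},z')$ and \emph{switched} terms $S(\mathcal B,\cdot)$ with $\mathcal B=\{p_1p_2\cdots m-2\}$. The savings $H_\theta(s)$ arise from the switching principle, which beats the parity barrier by trading an upper sieve on $\mathcal A$ for an upper sieve on the switched set $\mathcal B$. The second hypothesis (equidistribution of primes for extending vectors) handles the remainders of the non-switched terms; the first hypothesis \eqref{eq:programprimes} handles the remainders when sieving the \emph{switched} sets $\mathcal B$, whose elements are shifted products of primes. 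The recurrence for $H_\theta(s)$ and its monotonicity then come directly from Wu's iteration, exactly as you say in your final sentence---but the preceding description of how one gets there is not the correct argument.
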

\begin{proof}
Wu iterates the weighted sieve inequality (Lemmas 4.1 and 4.2 \cite{WuII}), on the subset of (non-switched) terms whose sieving parameter $s$ lies in the interval $s\in[2,3]$. This yields a recurrence relation for a function $H_\theta(s)$, which encodes the percent savings over the (normalized) linear sieve $sF(s)/(2e^\gamma)$.

Starting from a term $S(\mathcal A_{p_1p_2}, z)$, each successive iteration of the weighted sieve inequality is composed of terms of the form $S(\mathcal A_{bp_1p_2}, z')$ for some multiple $bp_1p_2$ corresponding to some vector extending $(D_1,D_2)$. By assumption, all such sets are equidistributed to level $x^\theta$, when weighted by the upper/lower linear sieve. Similarly, the switched sets are also equidistributed to level $x^\theta$ (here we only need the upper bound weights $\widetilde{\lambda}^+$). Finally, the savings function $H_\theta(s)$ inherits the stated monotonicity properties by construction of the iteration.
\end{proof}
The function $H_\theta$ depends on the known level of distribution $x^\theta$ (i.e. Wu used $\theta=\frac{1}{2}$ for Goldbach, and $\theta=\frac{4}{7}$ for twin primes). For parameters as in Tables 1 and 2 \cite[pp.30--32]{WuII},
\[H_{1/2}(t) \ge \begin{cases}
0.0223939 \qquad \text{if} \ 2.0 <  t \le \, 2.2, \\
0.0217196 \qquad \text{if} \ 2.2 < t \le \, 2.3,\\
0.0202876 \qquad \text{if} \ 2.3 < t \le \, 2.4,\\
0.0181433 \qquad \text{if} \ 2.4 < t \le \, 2.5,\\
0.0158644 \qquad \text{if} \ 2.5 < t \le \,2.6, \\
0.0129923 \qquad \text{if} \ 2.6 < t \le  \,2.7, \\
0.0100686 \qquad \text{if} \ 2.7 < t \le \, 2.8, \\
0.0078162 \qquad \text{if} \ 2.8 < t \le \, 2.9, \\
0.0072943 \qquad \text{if} \ 2.9 < t \le \, 3.0, \\
0  \qquad\qquad\qquad\textnormal{else,}
\end{cases}\]
and
\[H_{4/7}(t) \ge \begin{cases}
0.0287118 \qquad \text{if} \ 2.0 \, \le \, t \le \, 2.1,\\
0.0280509 \qquad \text{if} \ 2.1 <  t \le \, 2.2, \\
0.0264697 \qquad \text{if} \ 2.2 < t \le \, 2.3,\\
0.0241936 \qquad \text{if} \ 2.3 < t \le \, 2.4,\\
0.0214619 \qquad \text{if} \ 2.4 < t \le \, 2.5, \\
0.0183875 \qquad \text{if} \ 2.5 < t \le \,2.6, \\
0.0149960 \qquad \text{if} \ 2.6 < t \le  \,2.7, \\
0.0117724 \qquad \text{if} \ 2.7 < t \le \, 2.8, \\
0.0094724 \qquad \text{if} \ 2.8 < t \le \, 2.9, \\
0.0090024 \qquad \text{if} \ 2.9 < t \le  \,3.0, \\
0  \qquad\qquad\qquad\textnormal{else.}
\end{cases}
\]
As such, in \cite[Theorem 3]{WuII} Wu obtained $\pi_2(x)/\Pi(x) \lesssim (7/2)(1-H_{4/7}(2.1))\le 3.39951$.

To complete our proof of Theorem \ref{thm:twinbound} we apply Lemma \ref{lem:Wuiterate}, now valid up to level $x^{\frac{7}{12}}$ by Corollary \ref{cor:May712} and Proposition \ref{prop:JMprogramprimes} for $\widetilde{\lambda}^+$. Note when the largest integration variable is $t\ge \frac{1}{5}$, we have $\theta(t)=\frac{2-t}{3}\le\, \frac{7}{12}$ iff $t \ge \frac{1}{4}$. A key feature we use to satisfy the conditions of Lemma \ref{lem:Wuiterate} is that the level $x^{\theta(t_1)}$ persists, since the largest prime $p_1=x^{t_1}$ is preserved through successive iterations.

Thus in practice, Lemma \ref{lem:Wuiterate} simply amounts to modifying the integral in $G_2$ by substituting $F(s) - \frac{2e^\gamma}{s}H_\theta(s)$ in for $F(s)$, $s = (\theta(t) - t-u)/u$, when $t\ge \frac{1}{4}$ (the only parameter $\ge \frac{1}{4}$ is $\rho$, so we only refine $G_2$). Denote this as $G^{\textnormal{Wu}}_2$. For ease we also use $H_\theta(s)\ge H_{4/7}(s)$, by monotonicity in $\theta$. Doing so, with the same parameter choices \eqref{eq:params}, we obtain $ G_2^{\textnormal{Wu}} \le\, -5.598667$ and hence
\begin{align}
\pi_2(x) 
\ & \lesssim \ \frac{\Pi(x)}{5e^\gamma} \bigg( G_2^{\textnormal{Wu}}+ \sum_{1\le n\neq2 \le 8} G_n \ + \  G(\tfrac{1}{5})\sum_{9\le\, n\le\, 21} I_n \bigg)
\ \lesssim  \ 3.299552\,\Pi(x).
\end{align}

This completes the proof of Theorem \ref{thm:twinbound}.

For slight numerical gains, one may compute $H_\theta(s)$ when $\theta\in [\frac{4}{7},\frac{7}{12}]$, by tweaking the formulae in \cite{WuII}. More substantially, Wu defined a lower bound savings $h_\theta(s)$, for a substitution of $f(s)$ by $f(s) + \frac{2e^\gamma}{s}h_\theta(s)$. But in practice, to compute $h$ would require derivations (analogous to $H$) of as yet undetermined formulae. We leave these to the reader.

\vspace{3mm}
\begin{center}
\begin{tabular}{r|r|rlrl}
$n$ & $G_n$  & $n$ & $I_n$ & $n$ & $I_n$\\
\hline
1 & 39.00163  & 9  & 0.0332157 & 17 & 0.000315\\
2&$-$5.591009 & 10 & 0.0228322 & 18 & 0.000269\\
3&$-$3.986553 & 11 & 0.0092564 & 19 & 0.000164\\
4&$-$5.060499 & 12 & 0.0150101 & 20 & $\le \ 2.70\cdot10^{-6}$\\
5 &  1.864133 & 13 & 0.0547244 & 21 & $\le \ 5.50\cdot10^{-9}$\\
6 &  0.741181 & 14 & 0.0260202 &    & \\
7 &  0.453663 & 15 & 0.0124636 &    & \\
8 &  0.923736 & 16 & 0.0001314 &    & 
\end{tabular}
\end{center}

\section*{Acknowledgments}
The author is grateful to James Maynard for suggesting the problem and for many valuable discussions. The author also thanks Carl Pomerance, Kyle Pratt, and the anonymous referee for helpful feedback. The author was supported by a Clarendon Scholarship at the University of Oxford.

\bibliographystyle{amsplain}

\end{document}